\newcommand*\patchAmsMathEnvironmentForLineno[1]{%
  \expandafter\let\csname old#1\expandafter\endcsname\csname #1\endcsname
  \expandafter\let\csname oldend#1\expandafter\endcsname\csname end#1\endcsname
  \renewenvironment{#1}%
     {\linenomath\csname old#1\endcsname}%
     {\csname oldend#1\endcsname\endlinenomath}}%
\newcommand*\patchBothAmsMathEnvironmentsForLineno[1]{%
  \patchAmsMathEnvironmentForLineno{#1}%
  \patchAmsMathEnvironmentForLineno{#1*}}%
\renewcommand{\subsectionmark}[1]{}
\newenvironment{enumeratearabic}{
\begin{enumerate}[label=(\arabic*), leftmargin=0pt,labelindent=2em,itemindent=!]
}{
\end{enumerate}
}
\newenvironment{enumerateroman}{
\begin{enumerate}[label=(\roman*), leftmargin=0pt,labelindent=2em,itemindent=!]
}{
\end{enumerate}
}
\newenvironment{enumeratearabic*}{
\begin{enumerate*}[label=(\arabic*)] %
}{
\end{enumerate*}
}
\newenvironment{enumerateroman*}{
\begin{enumerate*}[label=(\roman*)] %
}{
\end{enumerate*}
}
\numberwithin{equation}{section}
\newtheorem{theoremcounter}{theoremcounter}[section]
\newtheorem{maintheoremcounter}{maintheoremcounter}
\theoremstyle{plain}
\newtheorem{corollary}[theoremcounter]{Corollary}
\newtheorem{lemma}[theoremcounter]{Lemma}
\newtheorem{proposition}[theoremcounter]{Proposition}
\newtheorem{theorem}[theoremcounter]{Theorem}
\theoremstyle{plain}
\newtheorem{maincorollary}[maintheoremcounter]{Corollary}
\newtheorem{maintheorem}[maintheoremcounter]{Theorem}
\theoremstyle{definition}
\newtheorem{example}[theoremcounter]{Example}
\newtheorem{mainexample}[maintheoremcounter]{Example}
\theoremstyle{remark}
\newtheorem{remark}[theoremcounter]{Remark}
\theoremstyle{nonumberremark}
\newtheorem{mainremark}{Remark}
\newtheorem{remarkcomputation}{Computation}
\newenvironment{mainremarkenumerate}
{%
\mainremark
\enumeratearabic
}{%
\endenumeratearabic
\endmainremark
}%
\let\cal\undefined
 \newcommand{\texpdf}[2]{#1}
 \newcommand{\texpdf}[2]{\texorpdfstring{#1}{#2}}
\newcommand{\tx}{\ensuremath{\text}}
\newcommand{\tbf}{\bfseries}
\newcommand{\thdash}{\nbd th}
\newcommand{\nbd}{\nobreakdash-\hspace{0pt}}
\newcommand{\bboard}{\ensuremath{\mathbb}}
\newcommand{\cal}{\ensuremath{\mathcal}}
\renewcommand{\frak}{\ensuremath{\mathfrak}}
\newcommand{\bbA}{\ensuremath{\bboard A}}
\newcommand{\bbF}{\ensuremath{\bboard F}}
\newcommand{\bbP}{\ensuremath{\bboard P}}
\newcommand{\cO}{\ensuremath{\cal{O}}}
\newcommand{\frake}{\ensuremath{\frak{e}}}
\newcommand{\fraku}{\ensuremath{\frak{u}}}
\newcommand{\rmc}{\ensuremath{\mathrm{c}}}
\newcommand{\rmf}{\ensuremath{\mathrm{f}}}
\newcommand{\rml}{\ensuremath{\mathrm{l}}}
\newcommand{\rms}{\ensuremath{\mathrm{s}}}
\newcommand{\rmE}{\ensuremath{\mathrm{E}}}
\newcommand{\rmF}{\ensuremath{\mathrm{F}}}
\newcommand{\rmL}{\ensuremath{\mathrm{L}}}
\newcommand{\rmM}{\ensuremath{\mathrm{M}}}
\newcommand{\rmR}{\ensuremath{\mathrm{R}}}
\newcommand{\rmS}{\ensuremath{\mathrm{S}}}
\newcommand{\rmT}{\ensuremath{\mathrm{T}}}
\newcommand{\rmU}{\ensuremath{\mathrm{U}}}
\newcommand{\rmV}{\ensuremath{\mathrm{V}}}
\newcommand{\wtd}{\widetilde}
\newcommand{\ov}{\overline}
\newcommand{\llbrkt}{\llbracket}
\newcommand{\rrbrkt}{\rrbracket}
\newcommand*{\longhookrightarrow}{\ensuremath{\lhook\joinrel\relbar\joinrel\rightarrow}}
\newcommand{\ra}{\ensuremath{\rightarrow}}
\newcommand{\thra}{\ensuremath{\twoheadrightarrow}}
\newcommand{\lra}{\ensuremath{\longrightarrow}}
\newcommand{\lhra}{\ensuremath{\longhookrightarrow}}
\newcommand{\mto}{\ensuremath{\mapsto}}
\newcommand{\lmto}{\ensuremath{\longmapsto}}
\newcommand{\amid}{\ensuremath{\mathop{\mid}}}
\newcommand{\ZZ}{\ensuremath{\mathbb{Z}}}
\newcommand{\QQ}{\ensuremath{\mathbb{Q}}}
\newcommand{\RR}{\ensuremath{\mathbb{R}}}
\newcommand{\CC}{\ensuremath{\mathbb{C}}}
\renewcommand{\Im}{\ensuremath{\mathrm{Im}}}
\newcommand{\isdiv}{\amid}
\newcommand{\nisdiv}{\ensuremath{\mathop{\nmid}}}
\renewcommand{\pmod}[1]{\ensuremath{\;(\mathrm{mod}\, #1)}}
\newenvironment{psmatrix}{\left(\begin{smallmatrix}}{\end{smallmatrix}\right)}
\newcommand{\Mat}[1]{\ensuremath{\mathrm{Mat}_{#1}}}
\newcommand{\GL}[1]{\ensuremath{\mathrm{GL}_{#1}}}
\newcommand{\SL}[1]{\ensuremath{\mathrm{SL}_{#1}}}
\newcommand{\Mp}[1]{\ensuremath{\mathrm{Mp}_{#1}}}
\renewcommand{\det}{\ensuremath{\mathrm{det}}}
\renewcommand{\ker}{\ensuremath{\mathrm{ker}}}
\newcommand{\HS}{\mathbb{H}}
\newcommand{\lcm}{\ensuremath{\mathrm{lcm}}}
\newcommand{\isexdiv}{\ensuremath{\mathop{\|}}}
\newcommand{\ovellker}{\ensuremath{\ov{\ker}_{\rmF\rmE\ell}}}
\newcommand{\ellker}{\ensuremath{\ker_{\rmF\rmE\ell}}}
\newcommand{\om}{\ensuremath{\omega}}
\newcommand{\ga}{\ensuremath{\gamma}}
\newcommand{\Ga}{\ensuremath{\Gamma}}
\newcommand{\tdGa}{\ensuremath{\wtd{\Gamma}}}
\newcommand{\GMp}[1]{\ensuremath{\mathrm{GMp}_{#1}}}
\newcommand{\Ind}{\ensuremath{\mathrm{Ind}}}
\newcommand{\rmfe}{\ensuremath{\mathrm{fe}}}
\newcommand{\rmFE}{\ensuremath{\mathrm{FE}}}
\newcommand{\OK}{\ensuremath{\cO_K}}
\newcommand{\OKell}{\ensuremath{\cO_{K,\ell}}}
\newcommand{\Fell}{\ensuremath{\rmF_{K,\ell}}}
\newcommand{\rmTcl}{\ensuremath{\rmT^{\rmc\rml}}}
\renewcommand{\longmapsfrom}{\mathrel{\reflectbox{\ensuremath{\longmapsto}}}}
\newcommand{\headertitle}{{\normalfont%
  Relations among Ramanujan-Type Congruences I
}}
\newcommand{\headerauthors}{%
  M.~Raum%
}
\title{%
  Relations among\\Ramanujan-Type Congruences I
}
\author{%
Martin Raum%
\thanks{The author was partially supported by Vetenskapsr\aa det Grant~2015-04139 and~2019-03551.}%
}
\date{}%
\begin{document}

\thispagestyle{scrplain}
\begingroup
\deffootnote[1em]{1.5em}{1em}{\thefootnotemark}
\maketitle
\endgroup

{\small
\noindent
{\tbf Abstract:}
We prove that Ramanujan-type congruences for integral weight modular forms away from the level and the congruence prime are equivalent to specific congruences for Hecke eigenvalues. In particular, we show that Ramanujan-type congruences are preserved by the action of the shallow Hecke algebra. More generally, we show for weakly holomorphic modular forms of integral weight, that Ra\-ma\-nu\-jan-type congruences naturally occur for shifts in the union of two square-classes as opposed to single square-classes that appear in the literature on the partition function. We also rule out the possibility of square-free periods, whose scarcity in the case of the partition function was investigated recently. We complement our obstructions on maximal Ramanujan-type congruences with several existence statements. Our results are based on a framework that leverages classical results on integral models of modular curves via modular representation theory, and applies to congruences of all weakly holomorphic modular forms. Steinberg representations govern all maximal Ramanujan-type congruences for integral weights. We discern the scope of our framework in the case of half-integral weights through example calculations.
\\[.35em]
\textsf{\textbf{%
  Hecke and $\rmU_p$-congruences%
}}%
\hspace{0.3em}{\tiny$\blacksquare$}\hspace{0.3em}%
\textsf{\textbf{%
  Fourier coefficients of
  weakly holomorphic modular forms%
}}%
\hspace{0.3em}{\tiny$\blacksquare$}\hspace{0.3em}%
\textsf{\textbf{%
  modular representation theory%
}}%
\hspace{0.3em}{\tiny$\blacksquare$}\hspace{0.3em}%
\textsf{\textbf{%
  Steinberg representations%
}}
\\[0.15em]
\noindent
\textsf{\textbf{%
  MSC Primary:
  11F33%
}}
\hspace{0.3em}{\tiny$\blacksquare$}\hspace{0.3em}%
\textsf{\textbf{%
  MSC Secondary:
  11F30%
}}
}
\\[-.5\baselineskip]

\newpage
\setcounter{tocdepth}{2}%
\renewcommand{\contentsname}{\vspace{-1.0\baselineskip}}%
\tableofcontents
\vspace{1.5\baselineskip}

\Needspace*{4em}
\addcontentsline{toc}{section}{Introduction}
\markright{Introduction}
\lettrine[lines=2,nindent=.2em]{\tbf C}{ongruences} for Fourier coefficients of modular forms on arithmetic progressions come in two distinct kinds. There are Ramanujan-type congruences which are especially important in combinatorics, and the special case of~$\rmU_p$\nbd congruences, which enjoy a much more arithmetic geometric theory via their connection to slopes of~$p$\nbd adic modular forms. In this work, we characterize Ramanujan-type congruences for integral weight modular forms in terms of congruences for Hecke eigenvalues and make them thus equally accessible to arithmetic geometric tools.

Ramanujan in the 1920ies found astounding divisibility patterns for the partition function~\cite{ramanujan-1920}:
\begin{gather*}
  p(5 n + 4) \;\equiv\; 0 \;\pmod{5}
\tx{,}\quad
  p(7 n + 5) \;\equiv\; 0 \;\pmod{7}
\tx{,}\quad
  p(11 n + 6) \;\equiv\; 0 \;\pmod{11}
\tx{.}
\end{gather*}
When discovered in the 1920ies, they surprised number theorists as they provided regular divisibility for a function whose mere calculation for small arguments was at the edge of the feasible. Only decades later, it was uncovered that they fit into a larger theme: Given a weakly holomorphic modular form~$f$, a prime~$\ell$, and integers~$M > 0$ and~$\beta$, we say that~$f$ has a Ramanujan-type congruences modulo~$\ell$ on the arithmetic progression~$M \ZZ + \beta$ if its Fourier coefficients~$c(f;\,n)$ satisfy
\begin{gather*}
  \forall n \in \ZZ :\,
  c(f;\,M n + \beta) \equiv 0 \;\pmod{\ell}
\tx{.}
\end{gather*}
We say that this congruence is maximal, if~$f$ does not satisfy a Ramanujan-type congruence modulo~$\ell$ on~$M' \ZZ + \beta$ for any proper divisor~$M'$ of~$M$. Ramanujan's congruences correspond to the case of~$f = 1 / \eta$, the inverse of the De\-de\-kind~$\eta$\nbd function,~$\ell = M \in \{5, 7, 11\}$, and~$\beta \in M \ZZ_M \cap (\ZZ-\frac{1}{24})$. Since~$M$ is a power of~$\ell$, they are examples of the more special~$\rmU_p$\nbd congruences.

Instances of Ramanujan-type congruences for~$1 \slash \eta$ beyond~$\rmU_p$\nbd congruences were explored by Atkin and collaborators~\cite{atkin-obrien-1967,atkin-1968} in the 1960ies, using both novel computer facilities for experiments and the theory of Hecke operators for purely mathematical investigations. He found infinite families of Ramanujan-type congruences, but remained limited to finitely many~$\ell$.

Eventually in the 2000s, Ono~\cite{ono-2000} and Ahlgren-Ono~\cite{ahlgren-ono-2001} provided Ramanujan-type congruences for~$1 \slash \eta$ for all~$\ell > 3$. Like Atkin, they employed Hecke operators to derive their results. The omnipresence of Hecke operators in work on Ramanujan-type congruences suggests a tight link between these and congruences for the action of Hecke operators. This link is manifest in the next two theorems.

We write~$\rmTcl_p$ for the~$p$\thdash\ Hecke operator acting on modular forms.
\begin{maintheorem}
\label{mainthm:hecke-stable}
Fix an odd prime~$\ell$, an integral weight~$k$, and a Dirichlet character~$\chi$ modulo~$N$. Fix a positive integer~$M$ and an integer~$\beta$. Then the space of cusp forms~$f$ of weight~$k$ for~$\chi$ with~$\ell$\nbd integral Fourier coefficients~$c(f;\, n)$ that satisfy the Ramanujan-type congruence
\begin{gather*}
  \forall n \in \ZZ \,:\,
  c(f;\,M n + \beta) \equiv 0 \;\pmod{\ell}
\end{gather*}
is stable under the action of the Hecke operators~$\rmTcl_p$, $p \nisdiv \gcd(M, \ell N)$.
\end{maintheorem}
\begin{mainremarkenumerate}
\item
A statement subsuming the case of algebraic Fourier coefficients is given in Theorem~\ref{thm:hecke-module-ramanujan-type-congruences}.

\item
The restriction that~$\ell$ is odd in this introduction arises from the condition~$\ell \nisdiv 2$ in Theorem~\ref{thm:structure-of-ramanujan-type-congruences}~\ref{it:thm:structure-of-ramanujan-type-congruences:ramanujan-type-congruence-with-gap}. In other words, if~$\ell$ is even, the conclusions in our main tools, Theorems~\ref{mainthm:structure-of-ramanujan-type-congruences} and~\ref{thm:structure-of-ramanujan-type-congruences}, fail and we are thus unable to deduce Theorems~\ref{mainthm:hecke-stable} and~\ref{mainthm:ramanujan-type-implies-hecke} and Corollary~\ref{maincor:ramanujan-type-has-density}. However, modular forms of integral weight are congruent modulo~$2$ to modular forms of half-integral weight. We can apply the very different techniques that we develop in the second part of this series in the latter setting.
\end{mainremarkenumerate}

We say that a modular forms is a generalized Hecke eigenform modulo~$\ell$ for~$\rmTcl_p$ if there is an $\ell$-integral scalar~$\lambda_p$ and a nonnegative integer~$d$ such that we have the congruence~$f | (\rmTcl_p - \lambda_p)^{d+1} \equiv 0 \,\pmod{\ell}$. We refer to~$\lambda_p$ as its eigenvalue, and to the minimal possible nonnegative~$d$ as its depth. Theorem~\ref{mainthm:hecke-stable} implies that if a linear combination~$f = \sum_\lambda f_\lambda$ satisfies a Ramanujan-type congruence for generalized Hecke eigenforms~$f_\lambda$ for~$\rmTcl_p$, $p \nisdiv \gcd(M,\ell N)$, of eigenvalues~$\lambda$ that are pairwise different modulo~$\ell$, then each~$f_\lambda$ satisfies the same Ramanujan-type congruence. In particular, the assumption in Theorem~\ref{mainthm:ramanujan-type-implies-hecke} that~$f$ is a generalized Hecke eigenform does not limit its scope.
\begin{maintheorem}
\label{mainthm:ramanujan-type-implies-hecke}
Fix an odd prime~$\ell$, and let~$f$ be a cusp form of integral weight~$k$ for a Dirichlet character~$\chi$ modulo~$N$ with~$\ell$\nbd integral Fourier coefficients~$c(f;\,n)$. Fix a power~$M$ of a prime~$p$, $p \nisdiv \ell N$, and an integer~$\beta$. Assume that~$f$ is a generalized Hecke eigenform modulo~$\ell$ for~$\rmTcl_p$ of eigenvalue~$\lambda_p$ of depth~$d$.

Provided that~$f \not\equiv 0 \,\pmod{\ell}$, the following are equivalent:
\begin{enumeratearabic}
\item
We have the Ramanujan-type congruence
\begin{gather*}
  \forall n \in \ZZ \,:\,
  c(f;\,M n + \beta) \equiv 0 \;\pmod{\ell}
\tx{.}
\end{gather*}

\item
For every\/~$0 \le t \le d$ the coefficient of exponent\/~$\gcd(M, \beta)$ in the following formal power series vanishes modulo~$\ell$:
\begin{gather*}
  X^t\, \big( 1 - \lambda_p X + \chi(p) p^{k-1} X^2 \big)^{-t-1}
\tx{.}
\end{gather*}
\end{enumeratearabic}
\end{maintheorem}
\begin{mainremarkenumerate}
\item
A statement subsuming the case of algebraic Fourier coefficients is given in Theorem~\ref{thm:ramanujan-type-implies-hecke}.

\item
The assumption that~$M$ is a prime power hints at a substantial complication in the theory: The structure of the $\ell$\nbd adic completion of the Hecke algebra in the algebraic-geometric sense is not known in general. Recent work, by for example Bella\"{i}che--Khare~\cite{bellaiche-khare-2015} or Wake--Wang-Erickson~\cite{wake-wang-erickson-2021}, illustrates the subtleties that can arise. To meet the assumption on~$M$, we can replace~$f$ with a maximal Ramanujan-type congruence on~$M \ZZ + \beta$ by the modular form
\begin{gather*}
  f_p
\;:=\;
  \sum_{\substack{n \in \ZZ\\\gcd(n,M_p^\#) = \gcd(\beta,M_p^\#)}}
  c(f;\,n) e(n\tau)
\tx{,}
\end{gather*}
where factor~$M = M_p M_p^\#$ with a~$p$\nbd power~$M_p$ and~$M_p^\#$ co-prime to~$p$.

\item
In the situation of Theorem~\ref{mainthm:ramanujan-type-implies-hecke}, the conditions on~$\lambda_p$ formulated in Proposition~\ref{prop:hecke-implies-ramanujan-type} will be satisfied. See also the remark following Theorem~\ref{thm:ramanujan-type-implies-hecke}.

\item
If~$\rmTcl_p$ acts semi-simply modulo~$\ell$ on~$\rmS_k(\chi, \OKell)$, where we use common notation revisited in Section~\ref{sec:abstract-spaces-of-modular-forms}, then Proposition~\ref{prop:hecke-implies-ramanujan-type} informs us that all~$f \in \rmS_k(\chi, \OKell)$ exhibit Ramanujan-type congruences modulo~$\ell$ on~$p^m (p \ZZ + \beta_0)$ for some positive integer~$m$ and all~$\beta_0 \in \ZZ \setminus p \ZZ$. The choice
\begin{gather*}
  m = \ell^{2 \dim\,\rmS_k(\chi, \OKell)} - 2
\end{gather*}
always works, but depending on the inertia at~$\ell$ of the coefficient field of Hecke eigenforms in~$\rmS_k(\chi)$, smaller~$m$ might be suitable.

\item
The theorem can be extended to the case of Eisenstein series, but would then require a slightly more technical statement due to the possibility that Eisenstein series are congruent to constants. For example, we have~$E_4 \equiv 1 \,\pmod{5}$.

\item
A weaker and significantly more technical statement that subsumes the case of weakly holomorphic modular forms in given in Proposition~\ref{prop:ramanujan-type-implies-hecke-like}.
\end{mainremarkenumerate}

For a prime~$\ell$, positive integers~$M$ and~$m$, and a weakly holomorphic modular form~$f$ with~$\ell$-integral Fourier coefficients, one can consider the set~$P$ of primes~$p$ with the property that
\begin{gather*}
  \exists \beta \in \ZZ \,:\,
  \forall n \in \ZZ \,:\,
  c(f;\, M p^m n + \beta)
\equiv
  0\;\pmod{\ell}
\tx{.}
\end{gather*}
The aforementioned result by Ono~\cite{ono-2000} can be rephrased to the effect that the upper density of~$P$ with $f = 1 \slash \eta(24 \tau)$, $M = \ell$, and $m = 4$ is positive, if~$\ell \ge 5$. In reaction to an earlier version of the present work, Serre asked whether~$P$ has a density. In the situation of Theorem~\ref{mainthm:hecke-stable} this is indeed the case.
\begin{maincorollary}
\label{maincor:ramanujan-type-has-density}
Fix an odd prime~$\ell$, and let~$f$ be a cusp form of integral weight~$k$ for a Dirichlet character~$\chi$ modulo~$N$ with~$\ell$\nbd integral Fourier coefficients~$c(f;\,n)$. Fix positive integers~$M$ and~$m$ and an integer~$\beta$. Then the set of primes~$p$ such that
\begin{gather*}
  \exists \beta \in \ZZ \,:\,
  \forall n \in \ZZ \,:\,
  c(f;\,M p^m n + \beta) \equiv 0 \;\pmod{\ell}
\end{gather*}
is frobenian in the sense of~\S\,3.3.2 of Serre~\cite{serre-2012}. In particular, it has a density.
\end{maincorollary}
\begin{mainremark}
A statement subsuming the case of algebraic Fourier coefficients is given in Corollary~\ref{cor:ramanujan-type-has-density}.
\end{mainremark}

The structure of congruences for Hecke eigenvalues as in Theorem~\ref{mainthm:ramanujan-type-implies-hecke} as opposed to Ramanujan-type congruences is much more restricted. It is known by work of Radu~\cite{radu-2012,radu-2013} that Ramanujan-type congruences for~$1 \slash \eta$ naturally live on square-classes of~$\beta \,\pmod{M}$. At this level of generality Radu's result is optimal, but congruences in integral weights adhere to much tighter rules. We offer three structure results, which also form the basis of our proofs of Theorems~\ref{mainthm:hecke-stable} and~\ref{mainthm:ramanujan-type-implies-hecke}.
\begin{maintheorem}
\label{mainthm:structure-of-ramanujan-type-congruences}
Fix an odd prime~$\ell$ and let~$f$ be a weakly holomorphic modular form of integral weight~$k$ for a Dirichlet character~$\chi$ modulo~$N$ with~$\ell$\nbd integral Fourier coefficients~$c(f;\,n)$. Assume that~$f$ satisfies the Ramanujan-type congruence
\begin{gather*}
  \forall n \in \ZZ \,:\, c(f;\,M n + \beta) \equiv 0 \;\pmod{\ell}
\end{gather*}
for some positive integer~$M$ and some integer~$\beta$. Consider a prime~$p \isdiv M$ that does not divide~$\ell N$, and factor~$M$ as~$M_p M_p^\#$ with a~$p$\nbd power~$M_p$ and~$M_p^\#$ co-prime to~$p$.

\begin{enumeratearabic}
\item
\label{it:mainthm:structure-of-ramanujan-type-congruences:gaps}
We have the Ra\-ma\-nu\-jan-type congruence with gap
\begin{gather*}
  \forall n \in \ZZ \setminus p \ZZ \,:\,
  c(f;\,(M \slash p) n + M_p \beta') \equiv 0 \;\pmod{\ell}
\quad
\tx{with }
  M_p \beta' \equiv \beta \;\pmod{M_p^\#}
\tx{.}
\end{gather*}

\item	
\label{it:mainthm:structure-of-ramanujan-type-congruences:square-free}
We have the Ramanujan-type congruence
\begin{gather*}
  \forall n \in \ZZ \,:\,
  c(f;\,M' n + \beta) \equiv 0 \;\pmod{\ell}
\quad
\tx{with }
  M' = \gcd(M, M_{\rms\rmf} \beta )
\tx{,}
\end{gather*}
where~$M_{\rms\rmf}$ is the largest square-free divisor of\/~$M$.

\item
\label{it:mainthm:structure-of-ramanujan-type-congruences:remove-prime}
If\/~$M_p \isdiv \beta$, then we have the Ramanujan-type congruence
\begin{gather*}
  \forall n \in \ZZ \,:\, c(f;\,M_p^\# n + \beta) \equiv 0 \;\pmod{\ell}
\tx{.}
\end{gather*}
If\/~$p^2 \nisdiv M$, then we have the congruence
\begin{gather*}
  \forall n \in \ZZ, M_p^\# n + \beta \ne 0 \,:\,
  c(f;\,M_p^\# n + \beta) \equiv 0 \;\pmod{\ell}
\tx{,}
\end{gather*}
which is a Ramanujan-type congruence if~$M_p^\# \nisdiv \beta$.
\end{enumeratearabic}
\end{maintheorem}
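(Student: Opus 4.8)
All three assertions concern the single prime~$p$, so I would first discard the prime-to-$p$ part of the modulus. The sieved series $f_p=\sum_{n\equiv\beta\,(M_p^\#)}c(f;n)\,e(n\tau)$ of Theorem~\ref{mainthm:ramanujan-type-implies-hecke} is a finite $\bbC$-linear combination of twists $f|_{k,\chi}\langle d\rangle$, $d\isdiv M_p^\#$, and translates $\tau\mapsto\tau+j/M_p^\#$ of~$f$; it is therefore again a mod~$\ell$ modular form of weight~$k$ whose level is divisible only by primes dividing~$N M_p^\#$, in particular prime to~$p$. After this reduction the hypothesis becomes ``$c(f_p;n)\equiv0\pmod{\ell}$ whenever $n\equiv\beta\pmod{M_p}$'', and each of \ref{it:mainthm:structure-of-ramanujan-type-congruences:gaps}--\ref{it:mainthm:structure-of-ramanujan-type-congruences:remove-prime} is equivalent to the corresponding statement for $f_p$ with~$M$ replaced by the prime power $M_p=p^a$; for~\ref{it:mainthm:structure-of-ramanujan-type-congruences:square-free} one moreover runs the argument once for each prime $p\isdiv M$ with $p\nisdiv N\ell$ and intersects the resulting progressions. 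I may thus assume that $g:=f_p$ is a mod~$\ell$ modular form of integral weight and level~$N_0$ prime to~$p\ell$, supported on $n\equiv\beta\pmod{M_p^\#}$, with $c(g;n)\equiv0\pmod{\ell}$ whenever $n\equiv\beta\pmod{p^a}$.

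One sub-case of~\ref{it:mainthm:structure-of-ramanujan-type-congruences:remove-prime} can be settled by hand, and isolating it shows what the rest must supply. If $M_p\isdiv\beta$, the hypothesis reads $c(g;n)\equiv0\pmod{\ell}$ for all $n\in p^a\ZZ$, i.e. $g|\rmU_p^{\,a}\equiv0\pmod{\ell}$. But $g$ lies in the $p$-old subspace of level~$N_0p$, on which $\rmU_p$ acts, through the two degeneracy maps $h\mapsto h$ and $h\mapsto h|\rmV_p$, by the block matrix $\left(\begin{smallmatrix}\rmT_p&1\\-\chi(p)p^{k-1}&0\end{smallmatrix}\right)$ whose ``determinant'' $\chi(p)p^{k-1}$ is a unit modulo~$\ell$. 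Hence $\rmU_p$ and therefore $\rmU_p^{\,a}$ is invertible there, so $g\equiv0\pmod{\ell}$ — which is exactly the asserted congruence modulo~$M_p^\#$. The remaining sub-case ($p^2\nisdiv M$, $p\nisdiv\beta$) and all of~\ref{it:mainthm:structure-of-ramanujan-type-congruences:gaps} and~\ref{it:mainthm:structure-of-ramanujan-type-congruences:square-free} lie deeper.

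For those I would turn to the representation theory at~$p$. Writing the Fourier sieve as $g_{p^b,\gamma}=p^{-b}\sum_{j\,(p^b)}e(-j\gamma/p^b)\,g\big|\left(\begin{smallmatrix}1&j/p^b\\0&1\end{smallmatrix}\right)$ and using the identity $\left(\begin{smallmatrix}1&j/p^b\\0&1\end{smallmatrix}\right)=p^{-b}\left(\begin{smallmatrix}1&0\\0&p^b\end{smallmatrix}\right)\left(\begin{smallmatrix}1&j\\0&1\end{smallmatrix}\right)\left(\begin{smallmatrix}p^b&0\\0&1\end{smallmatrix}\right)$, one sees that $g_{p^b,\gamma}$ is obtained by pulling~$g$ back to an integral model of the curve with full level-$p^b$ structure, letting the unipotent part of $\SL{2}(\ZZ/p^b)$ act, and pushing back down — and here the classical results on models of $X(p^b)$ over~$\ZZ[1/p]$ and their cusps enter. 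The $\GL{2}(\QQ_p)$-module generated by the $\GL{2}(\ZZ_p)$-spherical vector~$g$ (equivalently, at each finite level, the $\SL{2}(\ZZ/p^b)$-module assembled from the translates and $\rmV_p$-images of~$g$) is, since~$g$ has level prime to~$p$, a quotient — or a successive extension of such quotients when~$g$ is not a $\rmT_p$-eigenform modulo~$\ell$ — of the principal series induced from the Borel whose torus character is pinned by the $\rmT_p$- and $\langle p\rangle$-eigenvalue data of~$g$, and in this picture $g_{p^b,\gamma}$ is a torus eigenvector. The decisive input is the modular representation theory of these principal series: a mod~$\ell$ unramified principal series is irreducible unless its local parameters at~$p$ collide modulo~$\ell$, in which case it is a non-split extension with constituents a twist of the Steinberg representation and a one-dimensional character, and only in that case can a torus eigenvector vanish. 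Reading the submodule lattice off then yields the three statements: in~\ref{it:mainthm:structure-of-ramanujan-type-congruences:gaps}, vanishing of the depth-$a$ sieve propagates to the depth-$(a-1)$ sieve restricted to $p\nisdiv(\cdot)$ because the ``new at depth~$a$'' eigenvectors form a submodule stable under that reduction; in~\ref{it:mainthm:structure-of-ramanujan-type-congruences:remove-prime}, a vanishing sieve of depth~$\le1$ can arise only through collapse of the Steinberg constituent, which pushes~$g$ into the one-dimensional character and so — for integral weight — forces $g\equiv0\pmod{\ell}$; and~\ref{it:mainthm:structure-of-ramanujan-type-congruences:square-free} follows by iterating the depth-reduction of~\ref{it:mainthm:structure-of-ramanujan-type-congruences:gaps}, and upgrading each gapped congruence to a full one, at every bad prime until $v_p(M)$ has fallen to $1+v_p(\beta)$ — the depth at which the residue becomes divisible by the ambient prime power and the Steinberg obstruction disappears.

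The step I expect to be the main obstacle is the bridge in the third paragraph: first, making the dictionary ``Fourier sieve~$\leftrightarrow$~torus eigenvector in a mod~$\ell$ principal series'' precise and compatible with reduction modulo~$\ell$, which rests on the integral-model input (Deligne--Rapoport, Katz--Mazur) together with a careful analysis of the cusps so that reduction loses no information; and second, determining exactly which torus eigenvectors of a reducible mod~$\ell$ principal series vanish and at which depth — that is, establishing the ``Steinberg-governed'' dichotomy in the sharp form that \emph{no} vanishing occurs at~$p^2$ or deeper, outside the $M_p\isdiv\beta$ locus, except through the Steinberg constituent. This rigidity is precisely what rules out square-free periods and licenses prime-removal in~\ref{it:mainthm:structure-of-ramanujan-type-congruences:remove-prime}, and is the heart of the theorem.
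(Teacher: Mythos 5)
Your overall architecture matches the paper's: reduce to the prime-power part of~$M$ at~$p$, invoke the Deligne--Rapoport integral models to show that the mod-$\ell$ vanishing locus is stable under a congruence group (this is the paper's Theorem~\ref{thm:ell-kernel-representation}, and you correctly single it out as the decisive input), and then read the assertions off the submodule structure of the permutation module $\Fell\{\bbP^1(\ZZ\slash M_p\ZZ)\}$ --- the finite-level avatar of your mod-$\ell$ principal series, in which the Steinberg constituent governs everything. Your account of part~\ref{it:mainthm:structure-of-ramanujan-type-congruences:gaps} is faithful to how the paper argues (via Bonnaf\'e's description: either $\Fell\{\bbP^1(\bbF_p)\}=\mathrm{St}_p\oplus\Fell v$ with $\mathrm{St}_p$ irreducible, or, when $\ell\isdiv(p+1)$, $\mathrm{St}_p$ indecomposable of length two with socle $\Fell v$; in every case the sieve vector generates all of~$\mathrm{St}_p$, which contains the sieve vectors for every residue of exact $p$-valuation $v_p(M)-1$).

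Two steps, however, have genuine gaps. First, your ``by hand'' disposal of the case $M_p\isdiv\beta$ is circular. Invertibility of $\left(\begin{smallmatrix}\rmT_p&1\\-\chi(p)p^{k-1}&0\end{smallmatrix}\right)$ on the abstract direct sum only yields $g\equiv 0$ from $g|\rmU_p^{\,a}\equiv 0$ if the parametrization $(h_1,h_2)\mapsto h_1+h_2|\rmV_p$ of the $p$-old subspace is injective \emph{modulo~$\ell$}; unwinding, that injectivity is exactly the statement that a form of level prime to~$p$ whose coefficients vanish mod~$\ell$ at all~$n$ coprime to~$p$ vanishes mod~$\ell$ --- the paper's Lemma~\ref{la:oldforms-modulo-ell}, which is itself deduced from part~\ref{it:mainthm:structure-of-ramanujan-type-congruences:remove-prime}. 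The paper instead handles $M_p\isdiv\beta$ representation-theoretically: the sieve vector becomes $\sum_h(1:h)$, which (because $\ell\nisdiv p$) lies outside $\mathrm{St}_p$ and therefore generates all of $\Fell\{\bbP^1(\bbF_p)\}$, whence the congruence descends one power of~$p$ at a time. Second, part~\ref{it:mainthm:structure-of-ramanujan-type-congruences:square-free} does not follow by ``iterating~\ref{it:mainthm:structure-of-ramanujan-type-congruences:gaps} and upgrading'': the gapped congruence produced by~\ref{it:mainthm:structure-of-ramanujan-type-congruences:gaps} is supported on residues of exact $p$-valuation $v_p(M)-1$ near $M_p\beta'$, a set disjoint from the target progression $M'\ZZ+\beta$ when $p\nisdiv\beta$, so there is nothing to upgrade. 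The actual mechanism is square-class invariance (Proposition~\ref{prop:congruences-square-free-quotient}): conjugating the sieve vector by an element of $\Ga(N)\cap\Ga_0(MN')$ congruent to $\mathrm{diag}(u,\ov{u})$ moves~$\beta$ through its square class modulo~$M$, which for odd~$p$ fills out the whole progression $p\ZZ+\beta$; at $p=2$ a separate computation in the $2$-group $\SL{2}(\ZZ\slash 8\ZZ)$ is needed to pass from modulus~$8$ to modulus~$2$. Neither ingredient appears in your outline.
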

\begin{mainremarkenumerate}
\item
A statement subsuming the case of algebraic Fourier coefficients is given in Theorem~\ref{thm:structure-of-ramanujan-type-congruences}.

\item
The congruence in~\ref{it:mainthm:structure-of-ramanujan-type-congruences:gaps} of Theorem~\ref{mainthm:structure-of-ramanujan-type-congruences} implies that Ramanujan-type congruences exist for full square-classes of~$\beta \,\pmod{M}$.

\item
The congruence in~\ref{it:mainthm:structure-of-ramanujan-type-congruences:square-free} of Theorem~\ref{mainthm:structure-of-ramanujan-type-congruences} is stronger at the prime~$2$ than the one implied directly by the fact that Ramanujan-type congruences exist for full square-classes of~$\beta \,\pmod{M}$; see Proposition~\ref{prop:congruences-square-free-quotient}.

\item
The first congruence in~\ref{it:mainthm:structure-of-ramanujan-type-congruences:remove-prime} of Theorem~\ref{mainthm:structure-of-ramanujan-type-congruences} allows us to remove~$\rmU_p$\nbd congruences in the case of~$p \nisdiv \ell N$.

\item
The second congruence in~\ref{it:mainthm:structure-of-ramanujan-type-congruences:remove-prime} of Theorem~\ref{mainthm:structure-of-ramanujan-type-congruences} allows us to remove the square-free part from~$M$ that is co-prime to~$\ell N$. This answers the analogue in integral weights of a question examined by Ahlgren, Beckwith, and the author for the partition function~\cite{ahlgren-beckwith-raum-2020-preprint}.

\item
Combining the congruence in~\ref{it:mainthm:structure-of-ramanujan-type-congruences:square-free} and the second congruence in~\ref{it:mainthm:structure-of-ramanujan-type-congruences:remove-prime} of Theorem~\ref{mainthm:structure-of-ramanujan-type-congruences}, we can remove primes from~$M$ that do not divide~$\ell N \beta$.

\item
The condition $M^\#_p n + \beta \ne 0$ that appears in the second congruence in~\ref{it:mainthm:structure-of-ramanujan-type-congruences:remove-prime} of Theorem~\ref{mainthm:structure-of-ramanujan-type-congruences} is related to congruences between Eisenstein series and constants. If~$\beta$ is not divisible by~$M_p^\#$ it is vacuous.
\end{mainremarkenumerate}

It is hard in general to obstruct Ramanujan-type congruences. While there are techniques known to provide them, very few results exist that rule them out. As an example of how our characterization of Ramanujan-type congruences can be employed, we showcase the next corollary and example, whose proofs are short once Theorem~\ref{mainthm:ramanujan-type-implies-hecke} is established and are given at the end of Section~\ref{sec:ramanujan-type-hecke:hecke-congruence}.
\begin{maincorollary}
\label{maincor:no-ramanujan-type-congruences}
Fix an odd prime~$\ell$ and let~$f$ be a modular form of integral weight~$k$ for a Dirichlet character~$\chi$ modulo~$N$ with~$\ell$\nbd integral Fourier coefficients~$c(f;\,n)$. For an integer~$m \ge 2$ with~$\gcd(\ell (\ell - 1), m+1) = 1$ and a prime~$p \nisdiv \ell N$ such that~$\chi(p) p^{k-1}$ is not a square modulo~$\ell$, the Ramanujan-type congruence 
\begin{gather*}
  \forall n \in \ZZ \setminus p \ZZ \,:\,
  c(f;\,p^m n) \equiv 0
  \;\pmod{\ell}
\end{gather*}
implies that~$f \equiv 0 \,\pmod{\ell}$.
\end{maincorollary}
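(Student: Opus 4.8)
The plan is to recognise the displayed congruence as a disguised family of honest Ramanujan\nbd type congruences, apply Theorem~\ref{mainthm:ramanujan-type-implies-hecke} to reduce to $\rmT_p$\nbd eigenforms modulo $\ell$, and then kill each eigenform by the single fact that the local Euler factor at $p$ admits no ``extra'' relation among its roots. For the first step: for $n$ prime to $p$ write $n = pq + u$ with $u\in\{1,\dots,p-1\}$; since then $p^m n = p^{m+1}q + p^m u$, the hypothesis is equivalent to the $p-1$ Ramanujan\nbd type congruences $c(f;\,p^{m+1}n + p^m u)\equiv 0\pmod{\ell}$ ($n\in\ZZ$), one for each $u$. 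Applying Theorem~\ref{mainthm:ramanujan-type-implies-hecke} to one of these — with $M = p^{m+1}$, shift $p^m u$, and prime\nbd to\nbd$p$ part $M_p^\# = 1$ so that $f_p = f$ (and $p\isdiv M$, $p\nisdiv\ell N$ as required) — presents $f$ modulo $\ell$ as a sum of $\rmT_p$\nbd eigenforms $f_{p,\lambda}$, each inheriting $c(f_{p,\lambda};\,p^m j)\equiv 0\pmod{\ell}$ for all $j\equiv u\pmod{p}$.

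Next I would treat each $f_{p,\lambda}$ on its own. Let $\alpha,\beta\in\ov{\bbF}_\ell$ be the roots of $X^2 - \lambda X + \chi(p)p^{k-1}$. As $\chi(p)p^{k-1}$ is a nonsquare mod $\ell$ it is not a square in $\bbF_\ell$, so $\alpha\neq\beta$ (a repeated root would be $\lambda/2\in\bbF_\ell$). The Hecke congruence $f_{p,\lambda}|_{k,\chi}\rmT_p\equiv\lambda f_{p,\lambda}\pmod{\ell}$ gives the recursion $c(f_{p,\lambda};\,p^{r+1}j)\equiv\lambda\,c(f_{p,\lambda};\,p^r j) - \chi(p)p^{k-1}c(f_{p,\lambda};\,p^{r-1}j)$ for $r\geq 1$, together with $c(f_{p,\lambda};\,pj)\equiv\lambda\,c(f_{p,\lambda};\,j)$, for $p\nisdiv j$; since $\alpha\neq\beta$ this integrates to
\[
  (\alpha-\beta)\,c(f_{p,\lambda};\,p^r j) \;\equiv\; (\alpha^{r+1}-\beta^{r+1})\,c(f_{p,\lambda};\,j) \pmod{\ell}, \qquad r\geq 0 .
\]
The hard part will be to show $\alpha^{m+1}\neq\beta^{m+1}$. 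If $\alpha,\beta\in\bbF_\ell$ this is easy: a relation $\alpha^{m+1}=\beta^{m+1}$ would make $\alpha/\beta$ of order dividing both $m+1$ and $\ell-1$, hence of order $1$ by $\gcd(\ell(\ell-1),m+1)=1$, contradicting $\alpha\neq\beta$. If instead the Euler factor is irreducible over $\bbF_\ell$, then $\beta=\alpha^\ell$ and $\alpha^{\ell+1}=\chi(p)p^{k-1}$; this element being a nonsquare of $\bbF_\ell^\times$ forces $\alpha$ to be a nonsquare of the cyclic group $\bbF_{\ell^2}^\times$, so $\mathrm{ord}(\alpha)$ has $2$\nbd adic valuation $v_2(\ell^2-1)=v_2(\ell-1)+v_2(\ell+1)$, exceeding $v_2(\ell-1)$; consequently $\mathrm{ord}(\alpha/\beta)=\mathrm{ord}(\alpha^{1-\ell})$ has $2$\nbd adic valuation $v_2(\ell+1)\geq 1$ and is therefore even, whereas $m+1$ is odd (again by $\gcd(\ell(\ell-1),m+1)=1$, as $\ell-1$ is even), so $(\alpha/\beta)^{m+1}\neq 1$. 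With $\alpha^{m+1}\neq\beta^{m+1}$ in hand, the displayed identity at $r=m$ turns $c(f_{p,\lambda};\,p^m j)\equiv 0$ into $c(f_{p,\lambda};\,j)\equiv 0$ for all $j\equiv u\pmod{p}$, and then into $c(f_{p,\lambda};\,p^r j)\equiv 0$ for every $r\geq 0$.

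Finally I would sum over $\lambda$ and rerun the argument for each $u\in\{1,\dots,p-1\}$, obtaining $c(f;\,n)\equiv 0\pmod{\ell}$ for all $n\geq 1$, whence $f\equiv 0\pmod{\ell}$. The only genuinely delicate point is the root computation in the middle paragraph: it is precisely there that both hypotheses are needed — the nonsquare condition to pin down the $2$\nbd adic valuation of $\mathrm{ord}(\alpha)$, and $\gcd(\ell(\ell-1),m+1)=1$ to make $m+1$ coprime to $\ell-1$ and odd — and ruling out the coincidence $\alpha^{m+1}=\beta^{m+1}$ is the whole content, since that coincidence is exactly what would otherwise let a $\rmU_p$\nbd type congruence persist without forcing $f$ to vanish.
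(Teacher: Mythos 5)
Your overall route is the paper's: apply Theorem~\ref{mainthm:ramanujan-type-implies-hecke} to split $f$ modulo~$\ell$ into $\rmT_p$\nbd eigenforms, turn each Hecke congruence into the two-term recursion for $c(\,\cdot\,;p^r j)$, and rule out the coincidence $\alpha^{m+1}=\beta^{m+1}$ among the roots of the Euler factor at~$p$. The paper packages the recursion and the dichotomy on $\lambda^2$ versus $4\chi(p)p^{k-1}$ as Proposition~\ref{prop:hecke-implies-ramanujan-type} and simply cites it; your rederivation is the same computation, and your preliminary slicing of the gap congruence into the $p-1$ progressions $p^{m+1}\ZZ+p^m u$ is a clean way to bring Theorem~\ref{mainthm:ramanujan-type-implies-hecke} to bear.

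The one step that does not go through as written is your dismissal of the repeated-root case. You argue that $\alpha=\beta$ would put $\lambda/2$ in $\bbF_\ell$ and hence make $\chi(p)p^{k-1}=(\lambda/2)^2$ a square; but the eigenvalues of $\rmT_p$ on the space of modular forms modulo~$\ell$ need not lie in $\bbF_\ell$ (this is why Theorem~\ref{thm:ramanujan-type-implies-hecke} allows extending~$K$), so a repeated root $\alpha=\lambda/2\in\bbF_{\ell^2}\setminus\bbF_\ell$ with $\alpha^2=\chi(p)p^{k-1}$ a nonsquare of $\bbF_\ell$ is not excluded. This is precisely the branch for which the hypothesis $\ell\nmid(m+1)$ is needed: there one has $c(f_{p,\lambda};\,p^m j)\equiv(m+1)\alpha^m\,c(f_{p,\lambda};\,j)$ with $(m+1)\alpha^m\not\equiv 0$, which is how the paper handles it; the fix is one line. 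On the other branch your analysis is in fact more careful than the paper's own proof: when the Euler factor is irreducible over $\bbF_\ell$ the ratio $\alpha/\beta=\alpha^{1-\ell}$ has order dividing $\ell+1$ rather than $\ell-1$, so the paper's appeal to $\gcd(m+1,\ell-1)=1$ alone does not suffice, whereas your $2$\nbd adic valuation argument (the nonsquare norm forces $\mathrm{ord}(\alpha/\beta)$ to be even while $m+1$ is odd) is exactly what is needed there, and it checks out. Two minor caveats you share with the paper: everything tacitly assumes the eigenvalues lie in $\bbF_\ell$ or $\bbF_{\ell^2}$, and concluding $f\equiv 0$ from the vanishing of all $c(f;\,n)$ with $n\ge 1$ sets aside the constant term, as the proof of Theorem~\ref{thm:ramanujan-type-implies-hecke} does explicitly.
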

\begin{mainremark}
Observe that by Theorem~\ref{mainthm:structure-of-ramanujan-type-congruences} it suffices to consider Ramanujan-type congruences with gap as in Corollary~\ref{maincor:no-ramanujan-type-congruences} to cover all Ramanujan-type congruences on arithmetic progressions~$p^{m+1} \ZZ + \beta$, $m$ a nonnegative integer.
\end{mainremark}

\begin{mainexample}
\label{mainex:delta-function}
Theorems~\ref{mainthm:ramanujan-type-implies-hecke} and~\ref{mainthm:structure-of-ramanujan-type-congruences} are particularly strong in the case of level~$1$, when they explain all congruences apart from $\rmU_\ell$-congruences. For instance, consider an odd prime~$\ell$, the Ramanujan~$\Delta$-function, and the set of all arithmetic progressions~$M \ZZ + \beta$ with the property that~$c(\Delta; M n + \beta) \equiv 0 \,\pmod{\ell}$ for all integers~$n$. Then the elements of this set that are maximal with respect to the ordering by inclusion are of the form
\begin{gather*}
  \ell \ZZ + \beta
\tx{,}\; \beta \in \ZZ
\tx{,}
\quad\tx{or}\quad
  p^m (p \ZZ + \beta)
\tx{,}\;
  p \ne \ell \tx{ prime},\,
  m, \beta \in \ZZ,\,
  m \ge 1,\, p \nisdiv \beta
\tx{.}
\end{gather*}
In the next table we list some of the maximal elements for various~$\ell$ and~$p$. For simplicity, we write $p^m \ZZ_p^\times$ for the collection of all arithmetic progressions~$p^m (p \ZZ + \beta)$, where $\beta$ is any integer not divisible by~$p$.
\begin{center}
\begin{tabular}{r@{}r@{\hspace{1em}}rrrrr}
\toprule
&& $p = 2$ & $p = 3$ & $p = 5$ & $p = 7$ & $p = 11$ \\
\midrule
$\ell ={}$ & $ 3$ & $2^1 \ZZ^\times_2$, $2^3 \ZZ^\times_2$    & $3 \ZZ$                             &
                    $5^1 \ZZ^\times_5$, $5^3 \ZZ^\times_5$ & $7^2 \ZZ^\times_7$, $7^5 \ZZ^\times_7$ & $11^1 \ZZ^\times_{11}$, $11^3 \ZZ^\times_{11}$ \\
$\ell ={}$ & $ 5$ & $2^3 \ZZ^\times_2$, $2^7 \ZZ^\times_2$ & $3^3 \ZZ^\times_3$, $3^7 \ZZ^\times_3$ &
                    $5 \ZZ$                                & $7^3 \ZZ^\times_7$, $7^7 \ZZ^\times_7$ & $11^4 \ZZ^\times_{11}$, $11^9 \ZZ^\times_{11}$ \\
$\ell ={}$ & $ 7$ & $2^6 \ZZ^\times_2$, $2^{13} \ZZ^\times_2$ & $3^1 \ZZ^\times_3$, $3^3 \ZZ^\times_3$ &
                    $5^1 \ZZ^\times_5$, $5^3 \ZZ^\times_5$    & $7 \ZZ$                                & $11^6 \ZZ^\times_{11}$, $11^{13} \ZZ^\times_{11}$ \\
$\ell ={}$ & $11$ & $2^3 \ZZ^\times_2$, $2^7 \ZZ^\times_2$    & $3^{10} \ZZ^\times_3$, $3^{21} \ZZ^\times_3$ &
                    $5^4 \ZZ^\times_5$, $5^9 \ZZ^\times_5$    & $7^9 \ZZ^\times_7$, $7^{19} \ZZ^\times_7$    & \\
\bottomrule
\end{tabular}
\end{center}
\end{mainexample}

Theorems~\ref{mainthm:ramanujan-type-implies-hecke} and~\ref{mainthm:structure-of-ramanujan-type-congruences} still leave ample room for Ramanujan-type congruences in integral weight. While Theorem~\ref{mainthm:ramanujan-type-implies-hecke} provides a strong relation to Hecke operators that we can utilize in customary ways, the condition that~$p$ does not divide~$\ell N$ in both theorems provides some flexibility in constructing examples where the~$\gcd$ of~$M$ and the level is non-trivial. Cases with~$\ell \isdiv M$ arise from a construction through~$\Theta$\nbd operators that was provided by Dewar~\cite{dewar-2012}.  Moreover, Treneer provided an infinite family of Ramanujan-type congruences~\cite{treneer-2006} in the spirit of Ahlgren-Ono using Hecke operators. We revisit and amend these prior constructions of Ramanujan-type congruences in Sections~\ref{sec:ramanujan-type-hecke:theta-operator} and~\ref{sec:ramanujan-type-hecke:implied-by-hecke}. As a result, we find that essentially all congruences allowed by Theorems~\ref{mainthm:ramanujan-type-implies-hecke} and~\ref{mainthm:structure-of-ramanujan-type-congruences} occur for some modular form.

The proofs of Theorems~\ref{mainthm:hecke-stable},~\ref{mainthm:ramanujan-type-implies-hecke}, and~\ref{mainthm:structure-of-ramanujan-type-congruences}, and of Corollary~\ref{maincor:ramanujan-type-has-density}, which are special cases of Theorems~\ref{thm:hecke-module-ramanujan-type-congruences},~\ref{thm:ramanujan-type-implies-hecke}, and~\ref{thm:structure-of-ramanujan-type-congruences} and Corollary~\ref{cor:ramanujan-type-has-density}, build upon the concept of abstract spaces of modular forms, which we introduce in this paper. These spaces can be thought of as the dual of vector-valued modular forms. They allow for the intervention of modular representation theory into the subject. While we focus on the case of integral weights in this paper, Ramanujan-type congruences can be investigated through this formalism also in half-integral weights. Section~\ref{sec:characterization-ramanujan-type-congruences} contains a series of examples, which highlight the representation theoretic features of congruences obtained by Atkin and by Ono. In particular, we find in Example~\ref{ex:ovell-kernel-half-integral-weight:ono} that some Ramanujan-type congruences in half-integral weights necessarily exist on the union of two-square classes of~$\beta \,\pmod{M}$ as opposed to a single one. This parallels our results for integral weights in Theorem~\ref{mainthm:structure-of-ramanujan-type-congruences}, but does not hold in general as Atkin's congruences in Example~\ref{ex:ovell-kernel-half-integral-weight:atkin} illustrate. The case of half-integral weights is different from the case of integral weights in a subtle way. It will be the subject of a sequel to this work.

\paragraph{Acknowledgment}

The author thanks Claudia Alfes-Neumann, Scott Ahlgren, Olivia Beckwith, Henri Darmon, Olav Richter, and Jean-Pierre Serre for inspiring discussions and helpful comments.

\section{Abstract spaces of modular forms}
\label{sec:abstract-spaces-of-modular-forms}

Throughout this paper, we write $e(z)$ for $\exp(2 \pi i\, z)$ for $z \in \CC$ or~$z \in \CC \slash \ZZ$. 

\paragraph{The upper half plane}

The Poincar\'e upper half plane is defined as
\begin{gather*}
  \HS
\;:=\;
  \big\{ \tau \in \CC \,:\, \Im(\tau) > 0 \big\}
\tx{.}
\end{gather*}
It carries an action of~$\GL{2}^+(\RR) := \{ \ga \in \GL{2}(\RR) \,:\, \det(\ga) > 0 \}$ by M\"obius transformations
\begin{gather*}
  \begin{pmatrix} a & b \\ c & d \end{pmatrix} \tau
\;:=\;
  \frac{a \tau + b}{c \tau + d}
\tx{.}
\end{gather*}

\paragraph{The metaplectic group}

While the main theorems in this paper can be stated and proved without reference to the metaplectic group, Section~\ref{sec:characterization-ramanujan-type-congruences} extends from~$\SL{2}(\ZZ)$ to~$\Mp{1}(\ZZ)$ without further complication. We include the metaplectic case, since this level of generality will be needed in the second part of this series of papers.

The general metaplectic group~$\GMp{1}(\RR)$ is a nontrivial central extension
\begin{gather*}
  1 \lra \mu_2 \lra \GMp{1}(\RR) \lra \GL{2}(\RR) \lra 1
\tx{,}\quad
  \mu_{2} := \{ \pm 1 \}
\tx{.}
\end{gather*}
A common realization of~$\GMp{1}(\RR)$ is given by
\begin{gather*}
  \GMp{1}(\RR)
\,:=\,
  \big\{
  (\ga, \omega) \,:\,
  \ga = \begin{psmatrix} a & b \\ c & d \end{psmatrix} \in \GL{2}(\RR),\,
  \omega :\, \HS \ra \CC,\,
  \omega(\tau)^2 = c \tau + d
  \big\}
\tx{,}
\end{gather*}
in which case multiplication in~$\GMp{1}(\RR)$ is given by
\begin{gather*}
  (\ga, \omega) \cdot (\ga', \omega')
\,:=\,
  (\ga \ga',\, \omega \circ \ga' \cdot \omega')
\tx{.}
\end{gather*}
Throughout, the letter~$\ga$ may denote elements of both~$\GL{2}(\RR)$ and~$\GMp{1}(\RR)$.

We let~$\GMp{1}(\QQ)$ and~$\Mp{1}(\ZZ)$ be the preimages of~$\GL{2}(\QQ)$ and~$\SL{2}(\ZZ)$ under the projection~$\GMp{1}(\RR) \ra \GL{2}(\RR)$. In particular, we consider $\GMp{1}(\QQ)$ as a subgroup of~$\GMp{1}(\RR)$ as opposed to the general metaplectic group over the (finite) adeles, $\GMp{1}(\bbA_\rmf)$ or $\GMp{1}(\bbA)$.

\paragraph{Congruence subgroups}

A subgroup~$\Ga \subseteq \Mp{1}(\ZZ)$ is called a congruence subgroup if its projection to~$\SL{2}(\ZZ)$ is a congruence subgroup. We fix notation for the special cases
\begin{align*}
  \Ga_0(N)
\;&:=\;
  \big\{
  \begin{psmatrix} a & b \\ c & d \end{psmatrix} \in \SL{2}(\ZZ)
  \,:\,
  c \equiv 0 \,\pmod{N}
  \big\}
\tx{,}
\\
  \Ga(N)
\;&:=\;
  \big\{
  \begin{psmatrix} a & b \\ c & d \end{psmatrix} \in \SL{2}(\ZZ)
  \,:\,
  b,c \equiv 0 \,\pmod{N},\, a,d \equiv 1 \,\pmod{N}
  \big\}
\tx{,}
\\
  \Ga^+_\infty
\;&:=\;
  \big\{
  \begin{psmatrix} 1 & b \\ 0 & 1 \end{psmatrix} \in \SL{2}(\ZZ)
  \big\}
\tx{,}
\end{align*}
and write~$\tdGa_0(N)$, $\tdGa(N)$, and~$\tdGa^+_\infty$ for their preimages under the projection from~$\Mp{1}(\ZZ)$ to~$\SL{2}(\ZZ)$.

We record for later use that the integral metaplectic group~$\Mp{1}(\ZZ)$ is generated by the elements
\begin{gather*}
  S
\,:=\,
  \big( \begin{psmatrix} 0 & -1 \\ 1 & 0 \end{psmatrix}, \tau \mto \sqrt{\tau} \big)
\tx{,}\quad
  T
\,:=\,
  \big( \begin{psmatrix} 1 & 1 \\ 0 & 1 \end{psmatrix}, \tau \mto 1 \big)
\tx{,}
\end{gather*}
where $\sqrt{\tau}$ is the principal branch of the holomorphic square root on~$\HS$. We allow ourselves to also write~$T$ for its image in~$\SL{2}(\ZZ)$.

\paragraph{Slash actions}

The action~$\GL{2}^+(\RR) \circlearrowright \HS$ in conjunction with the cocycle $(\ga,\om) \mto \om$ yields the slash actions of weight~$k \in \frac{1}{2}\ZZ$ on functions~$f :\, \HS \ra \CC$:
\begin{gather}
\label{eq:slash-action-mp1r}
  f \big|_k\, (\ga,\om)
\,:=\,
  \det(\ga)^{\frac{k}{2}} \omega^{-2k} \cdot f \circ \ga
\tx{,}\quad
  (\ga,\om) 
\in
  \GMp{1}^+(\RR)
\tx{.}
\end{gather}

\paragraph{Modular forms}

Fix $k \in \frac{1}{2}\ZZ$, a finite index subgroup $\Ga \subseteq \Mp{1}(\ZZ)$, and a finite order character~$\chi : \Ga \ra \CC^\times$. A weakly holomorphic modular form of weight~$k$ for the character~$\chi$ on~$\Ga$ is a holomorphic function~$f :\, \HS \ra \CC$ satisfying the following two conditions:
\begin{enumerateroman}
\item For all $\ga \in \Ga$, we have $f \big|_k\, \ga = \chi(\ga) f$.
\item There exists $a \in \RR$ such that for all~$\ga \in \Mp{1}(\ZZ)$ we have
\begin{gather*}
  \big| \big( f \big|_k\, \ga \big) (\tau) \big| < b \exp\big( -a \Im(\tau) \big)
  \tx{\ as\ }\tau \ra i \infty
\tx{.}
\end{gather*}
\end{enumerateroman}
We write~$\rmM^!_k(\Ga, \chi)$ for the space of weakly holomorphic modular forms of weight~$k$ for the character~$\chi$ on~$\Ga$. We write~$\rmM_k(\Ga,\chi)$ and~$\rmS_k(\Ga,\chi)$ for the corresponding spaces of modular forms and cusp forms. If~$\chi$ is trivial, we suppress it from our notation. 

When saying that~$f$ is a modular form for a Dirichlet character~$\chi$ modulo~$N$, we will always assume that the level of~$f$ divides~$N$.

Let~$\Ga \subseteq \Mp{1}(\ZZ)$ be a normal subgroup of finite index. The spaces of weakly holomorphic modular forms for~$\Ga$ naturally carry the structure of a right-representation for~$\Mp{1}(\ZZ)$ via the weight-$k$ slash action. If~$k$ is integral, this representations factors through~$\Mp{1}(\ZZ) \thra \SL{2}(\ZZ)$.

\paragraph{Fourier expansions}

Given a finite index subgroup~$\Ga \subseteq \Mp{1}(\ZZ)$ and a finite order character~$\chi$ of~$\Ga$, there is a positive integer~$N$ such that $T^N \in \ker(\chi)$. In particular, a weakly holomorphic modular form~$f$ for this character~$\chi$ has a Fourier series expansion of the form
\begin{gather}
\label{eq:fourier-expansion}
  f(\tau)
\;=\;
  \sum_{n \in \frac{1}{N} \ZZ}
  c(f;\,n) e(n \tau)
\tx{.}
\end{gather}
For convenience, we set~$c(f;\,n) = 0$ if~$n \in \QQ \setminus \frac{1}{N}\ZZ$.

Given any ring~$R$, we write
\begin{gather}
  \rmFE(R)
\;:=\;
  R\big\llbrkt q^{\frac{1}{\infty}} \big\rrbrkt \big[q^{-1}\big]
\end{gather}
for the ring of Puiseux series with coefficients in~$R$. The Fourier expansion of weakly holomorphic modular forms yields a map
\begin{gather}
\label{eq:def:fourier-expansion-map}
  \rmfe :\,
  \rmM^!_k(\Ga, \chi)
\lra
  \rmFE(\CC)
\tx{,}\quad
  f
\lmto
  \rmfe(f)
:=
  \sum_{n \in \QQ} c(f;\,n) q^n
\tx{.}
\end{gather}

We record that if~$T \in \Ga$, then this map is $\tdGa^+_\infty$\nbd equivariant for the slash action~$|_k$ on the left hand side and the action defined by
\begin{gather*}
  q^n \big| T
=
  \chi(T) e(n) q^n
\tx{.}
\end{gather*}

We let~$\OKell$ be the localization of the integers~$\OK$ in a number field~$K \subset \CC$ at a given ideal~$\ell \subset \OK$. If $K = \QQ$ and therefore~$\OK = \ZZ$, we write $\ZZ_\ell = \OKell$. We set
\begin{align*}
  \rmM^!_k(\Ga, \chi;\, \OKell)
\;&:=\;
  \big\{
  f \in \rmM^!_k(\Ga, \chi) \,:\,
  \rmfe(f) \in \rmFE(\OKell)
  \big\}
\tx{,}
\\
  \rmM_k(\Ga, \chi;\, \OKell)
\;&:=\;
  \rmM_k(\Ga, \chi)
  \cap
  \rmM^!_k(\Ga, \chi;\, \OKell)
\tx{,}
\\
  \rmS_k(\Ga, \chi;\, \OKell)
\;&:=\;
  \rmS_k(\Ga, \chi)
  \cap
  \rmM^!_k(\Ga, \chi;\, \OKell)
\tx{.}
\end{align*}

\paragraph{The operators \texpdf{$\rmU_p$}{Up} and \texpdf{$\rmV_p$}{Vp}}

Given a prime~$p$ and a modular form~$f \in \rmM_k(\Ga_0(N), \chi)$ for a Dirichlet character~$\chi$, we set
\begin{gather}
  \big( f \big| \rmU_p \big)(\tau)
\;:=\;
  \sum_n c(f; p n) e(n \tau)
\quad\tx{and}\quad \big( f \big| \rmV_p \big)(\tau)
\;:=\;
  \sum_n c(f; n) e(p n \tau)
\tx{.}
\end{gather}
A calculation shows that~$f = f | \rmV_p \rmU_p$ and~$f | \rmU_p \rmV_p \in \rmM_k(\Ga_0(p^2 N), \chi)$. We can also verify by a calculation that~$\rmV_p \rmU_p$ commutes with~$\rmTcl_q$ if~$p \ne q$.

\paragraph{Group algebras and induction}

Given a commutative ring~$R$, we write $R[\Ga]$ for the group algebra of a discrete group~$\Ga$, let $\fraku_\ga \in R[\Ga]$ for $\ga \in \Ga$ denote its canonical units, and thus identify $\Ga$ with a subgroup of~$R[\Ga]^\times$. Right-modules of $R[\Ga]$ are in canonical correspondence to right-representations of~$\Ga$ over~$R$. Given a right-module~$V$ of~$R[\Ga]$, we write $\ker_{\Ga}(V)$ for the subgroup of~$\Ga$ that acts trivially on~$V$.

Fix a finite index subgroup~$\Ga' \subseteq \Ga$. Since all representations in this work factor through finite groups, we allow ourselves to define the induction of a right-module~$V$ for $R[\Ga']$ as
\begin{gather*}
  \Ind_{\Ga'}^\Ga\,V
:=
  V \otimes_{R[\Ga']} R[\Ga]
\tx{,}
\end{gather*}
which is isomorphic to the common definition in terms of $\Ga'$\nbd covariant functions on~$\Ga$. We omit sub- and superscripts when we believe that they are clear from the context.

\subsection{Definition of abstract spaces of modular forms}

Given a ring~$R \subseteq \CC$ and a finite index subgroup~$\Ga \subseteq \Mp{1}(\ZZ)$, let $V$ be a finite dimensional right-module for $R[\Ga]$ such that $\ker_{\Ga}(V) \subseteq \Mp{1}(\ZZ)$ has finite index. An abstract module (or space, if $R$ is a field) of weakly holomorphic modular forms over~$R$ is a pair of such a module~$V$ and a homomorphism of~$R[\Ga]$\nbd modules $\phi : V \ra \rmM^!_k(\ker_{\Ga}(V))$ for some~$k \in \frac{1}{2} \ZZ$. We say that the abstract module of weakly holomorphic modular forms~$(V,\phi)$ is realized in weight~$k$. If the image of~$\phi$ consists of modular forms or cusp forms, we refer to~$(V,\phi)$ as an abstract module of modular forms or cusp forms. If not stated differently we will assume that~$\Ga = \Mp{1}(\ZZ)$. Typically, we will have~$R = \CC$ or $R = \OKell$.

Given an abstract module of weakly holomorphic modular forms~$V$ for~$\Ga$, we write the action of~$\ga \in \Ga$ on~$v \in V$ as~$v | \ga$, in order to emphasis the relation to the slash action on modular forms.

\paragraph{Induction of modular forms}

Fix a weakly holomorphic modular form~$f \in \rmM^!_k(\Ga, \chi)$ for a character~$\chi$ of~$\Ga \subseteq \Mp{1}(\ZZ)$. The inclusion map
\begin{gather*}
  \phi :\,
  R f \lra \rmM^!_k\big( \ker(\chi) \big)
\tx{,}\;
  f \lmto f
\end{gather*}
yields an abstract module of weakly holomorphic modular forms~$(R f, \phi)$ for every ring~$R \subseteq \CC$ with $\ZZ[\chi] \subseteq R$, where $\ZZ[\chi]$ arises from~$\ZZ$ by adjoining the image of~$\chi$.

We also obtain an abstract module of weakly holomorphic modular forms
\begin{gather}
\label{eq:def:induction}
  \Ind^{\Mp{1}(\ZZ)}_\Ga\,R f
\;:=\;
  \Ind^{\Mp{1}(\ZZ)}_\Ga\,\big(R f, \phi\big)
\;:=\;
  \Big(
  R f \otimes_{R[\Ga]} R\big[\Mp{1}(\ZZ) \big],\,
  f \otimes \fraku_\ga \mto f \big|_k\,\ga
  \Big)
\tx{.}
\end{gather}
As for the induction of representations, we allow ourselves to omit sub- and superscripts.

\begin{example}
Let~$k$ and~$N > 0$ be integers, and consider a weakly holomorphic modular form~$f \in \rmM_k(\Ga_0(N),\chi)$ for a Dirichlet character~$\chi$ modulo~$N$, which yields a character of~$\Ga_0(N)$ via~$\begin{psmatrix} a & b \\ c & d \end{psmatrix} \mto \chi(d)$, as usual. We view $f$ as a modular form for~$\tdGa_0(N)$ via its projection to~$\Ga_0(N)$. Consider~$(V,\phi) = \Ind_{\tdGa_0(N)}\,\CC f$.  The co-sets $\ga \in \Ga_0(N) \backslash \SL{2}(\ZZ)$ or, more specifically, $(\ga,\omega) \in \tdGa_0(N) \backslash \Mp{1}(\ZZ)$ yield a basis~$f \otimes \frake_{(\ga,\omega)}$ of~$V$. The realization map~$\phi$ can be viewed as encoding the cusp expansions of~$f$. Observe that $V$ might contain vectors that map to zero under~$\phi$. For instance, if $f$ is a newform and~$N \ne 1$, then
\begin{gather*}
  \phi\Big(
  f \otimes
  \big(
  \sum_{(\ga,\omega) \in \tdGa_0(N) \backslash \Mp{1}(\ZZ)}
  \frake_{(\ga,\omega)}
  \big)
  \Big)
=
  \sum_{(\ga,\omega) \in \tdGa_0(N) \backslash \Mp{1}(\ZZ)}
  f \big|_k \ga
=
  0
\tx{.}
\end{gather*}
\end{example}

\subsection{Hecke operators}
\label{ssec:hecke-operators}

While we allowed ourselves to denote the classical Hecke operator by~$\rmT_p$ in the introduction, in the remainder of this paper we will write~$\rmTcl_p$ for it, and let~$\rmT_p$ be the vector-valued Hecke operator defined in this section.

Given a positive integer~$M$, we let
\begin{gather*}
  \GL{2}^{(M)}(\ZZ)
:=
  \big\{
  \ga \in \Mat{2}(\ZZ) \,:\,
  \det(\ga) = M
  \big\}
\tx{,}
\end{gather*}
and correspondingly write $\GMp{1}^{(M)}(\ZZ)$ for its preimage in~$\GMp{1}(\RR)$ under the projection to~$\GL{2}(\RR)$. For a commutative ring~$R$, we let $R[ \GMp{1}^{(M)}(\ZZ) ]$ be the~$R[\Mp{1}(\ZZ)]$\nbd bi-module with $R$\nbd basis~$\fraku_\ga$, $\ga \in \GMp{1}^{(M)}(\ZZ)$. Given an abstract module of weakly holomorphic modular forms~$(V,\phi)$ over~$R \subseteq \CC$, we define the $M$\thdash\ Hecke operator by
\begin{gather}
\label{eq:def:hecke-operator-abstract-space-of-modular-forms}
  \rmT_M\,(V,\phi)
\;:=\;
  \Big(
  V \otimes_{R[\Mp{1}(\ZZ)]} R\big[ \GMp{1}^{(M)}(\ZZ) \big],\;
  v \otimes \fraku_\ga \mto \phi(v) \big|_k\,\ga
  \Big)
\tx{.}
\end{gather}
It is straightforward to verify that~$\rmT_M\,(V,\phi)$ is an abstract module of weakly holomorphic modular forms. By slight abuse of notation, we write $\rmT_M\,(V,\phi) = (\rmT_M\,V,\rmT_M\,\phi)$, and further write~$\phi$ instead of~$\rmT_M\,\phi$, if no confusion can arise.

We can extend the notion of vector-valued Hecke operators to abstract modules of weakly holomorphic modular forms associated with congruence subgroups~$\tdGa_0(N)$. Specifically, given co-prime positive integers~$M$ and~$N$, we replace~$\GL{2}^{(M)}(\ZZ)$ in the above construction by
\begin{alignat*}{2}
  \big\{
  \ga = \begin{psmatrix} a & b \\ c & d \end{psmatrix} \in \GL{2}^{(M)}(\ZZ)
  &\,{}:{}\,&
  c  \equiv 0 &\,\pmod{N}
  \big\}
\tx{.}
\end{alignat*}
The isomorphisms in~\eqref{eq:hecke-operator-decomposition} and~\eqref{eq:hecke-to-induced} below extend to this settings.

\paragraph{Decomposition of Hecke operators}

Given co-prime positive integers~$M_1$ and~$M_2$ and an abstract module of modular forms~$(V,\phi)$, we have an isomorphism
\begin{gather}
\label{eq:hecke-operator-decomposition}
  \rmT_{M_1}\,\rmT_{M_2} (V,\phi)
\lra
  \rmT_{M_1 M_2} (V,\phi)
\tx{,}\quad
  \big( v \otimes \fraku_{\ga_2} \big) \otimes \fraku_{\ga_1}
\lmto
  v \otimes \fraku_{\ga_2 \ga_1}
\tx{,}
\end{gather}
which arises from the isomorphism of~$\Mp{1}(\ZZ)$\nbd bi-sets
\begin{gather*}
  \GMp{1}^{(M_2)}(\ZZ)
  \,\times_{\Mp{1}(\ZZ)}\,
  \GMp{1}^{(M_1)}(\ZZ)
\lra
  \GMp{1}^{(M_1 M_2)}(\ZZ)
\tx{,}\quad
  (\ga_2, \ga_1)
\lmto
  \ga_2 \ga_1
\tx{.}
\end{gather*}

\paragraph{Connection to induced representations}

For a positive integer~$M$, consider the map
\begin{gather}
\label{eq:def:hecke-operators:psi-M}
  \psi_M :\,
  \tdGa_0(M) \lra \Mp{1}(\ZZ)
\tx{,}\;
  \ga
\lmto
  \begin{psmatrix} M & 0 \\ 0 & 1 \end{psmatrix}
  \ga
  \begin{psmatrix} 1 \slash M & 0 \\ 0 & 1 \end{psmatrix}
\tx{.}
\end{gather}
We write~$\psi_M^\ast\,V$ for the pullback along~$\psi_M$ of an~$\Mp{1}(\ZZ)$\nbd representation~$V$.

Consider the subset
\begin{gather*}
  \GL{2}^{(M)\times}(\ZZ)
\;:=\;
  \big\{
  \ga = \begin{psmatrix} a & b \\ c & d \end{psmatrix} \in \Mat{2}(\ZZ) \,:\,
  \det(\ga) = M,\,
  \gcd(a,b,c,d) = 1
  \big\}
\;\subseteq\;
  \GL{2}^{(M)}(\ZZ)
\end{gather*}
and the corresponding subset~$\GMp{1}^{(M)\times}(\ZZ)$ of~$\GMp{1}^{(M)}(\ZZ)$. Then we have a bijection of~$\Mp{1}(\ZZ)$\nbd right-sets
\begin{gather*}
  \tdGa_0(M) \big\backslash \Mp{1}(\ZZ)
\lra
  \Mp{1}(\ZZ) \big\backslash \GMp{1}^{(M)\times}(\ZZ)
\tx{,}\;
  \ga
\lmto
  \begin{psmatrix} M & 0 \\ 0 & 1 \end{psmatrix} \ga
\end{gather*}
that intertwines the associated cocycles via~$\psi_M$. As a consequence, for an abstract module of weakly holomorphic modular forms~$(V,\phi)$ for~$\Mp{1}(\ZZ)$, we have an inclusion 
\begin{gather}
\label{eq:hecke-to-induced}
\begin{aligned}
  \Ind\, \psi_M^\ast(V)
\;=\;
  V \otimes_{\CC[\tdGa_0(M)]} \CC\big[ \Mp{1}(\ZZ) \big]
&\lhra
  V \otimes_{\CC[\Mp{1}(\ZZ)]} \CC\big[ \GMp{1}^{(M)}(\ZZ) \big]
\;=\;
  \rmT_M\,V
\tx{,}
\\
  v \otimes \fraku_\ga
&\lmto
  v \otimes \fraku_{\begin{psmatrix} M & 0 \\ 0 & 1 \end{psmatrix}\ga}
\tx{.}
\end{aligned}
\end{gather}

More generally, given a positive integer~$N$ that is co-prime to~$M$ we have an analogous embedding~$\psi_M :\, \tdGa_0(M N) \lra  \tdGa_0(N)$. If~$V$ is a representation for~$\tdGa_0(N)$, we have
\begin{gather}
\label{eq:hecke-to-induced-subgroup}
  \Ind\, \psi_M^\ast(V)
\lhra
  \rmT_M\,V
\tx{,}
\;
  v \otimes \fraku_\ga
\lmto
  v \otimes \fraku_{\begin{psmatrix} M & 0 \\ 0 & 1 \end{psmatrix}\,\ga}
\tx{.}
\end{gather}

\section{Characterization of Ramanujan-type congruences}
\label{sec:characterization-ramanujan-type-congruences}

In Corollary~\ref{cor:congruence-on-arithmetic-progression-hecke-T-eigenvector} of this section, we derive a characterization of Ramanujan-type congruences for, say, a weakly holomorphic modular form~$f$ in terms of specific vectors in the abstract space of weakly holomorphic modular forms~$\rmT_M\,\CC f$. The key result of this section is Theorem~\ref{thm:ell-kernel-representation}, which later allows us to employ our characterization as illustrated in the discussion of ``nontrivial congruences'' in Section~\ref{ssec:nontrivial-congruences}.

\subsection{\texpdf{$\ell$}{l}-kernels}

Suppose that $(V,\phi)$ is an abstract module of weakly holomorphic modular forms. Then we obtain a Fourier expansion map~$\rmfe \circ \phi :\, V \lra \rmFE(\CC)$.

Given number field~$K \subset \CC$ with maximal order~$\OK$, recall that we write~$\OKell \subseteq K$ for the localization of~$\OK$ at an ideal~$\ell \subseteq \OK$. The $\ell$\nbd kernel of~$(V,\phi)$ is defined as the preimage
\begin{gather}
\label{eq:def:ell-kernel}
  \ellker\big( (V,\phi) \big)
\;:=\;
  \big( \rmfe \circ \phi \big)^{-1}\big( \ell\, \rmFE(\OKell) \big)
\tx{.}
\end{gather}
Observe that we suppress~$K$ from our notation, but it is implicitly given by~$\ell$. We allow ourselves to identify a rational integer~$\ell$ with the ideal in~$\ZZ$ that it generates.

The next example illustrates that we need a condition on the rationality of Fourier coefficients in order to employ the concept of~$\ell$\nbd kernels.
\begin{example}
The $\ell$\nbd kernel can be trivial: Consider the abstract module of modular forms~$\CC\,f$ associated with the modular form~$f = E_{12} + \pi \Delta$ of weight~$12$ and level~$1$.
\end{example}
We say that an abstract module of weakly holomorphic modular forms has Fourier expansions over~$K$, if the following condition holds:
\begin{gather}
\label{eq:def:fourier-expansions-over-K}
  V
\;=\;
  \big( \rmfe \circ \phi \big)^{-1} \big( \rmFE(K) \big) \otimes_K \CC
\tx{.}
\end{gather}
In other words, $V$ has a $K$\nbd structure that is compatible with Fourier expansions. In this situation, if~$V$ is a~$\Ga$ representation and~$\ker_\Ga(V)$ is a congruence subgroup, then the~$\ell$\nbd kernel spans~$V$ as a~$\CC$ vector space.

A priori, the $\ell$\nbd kernel of an abstract module of modular forms is merely an~$\OKell$-module. The key theorem of this paper is the next one, which guarantees that it is a module for a specific group algebra.
\begin{theorem}
\label{thm:ell-kernel-representation}
Let $K \subset \CC$ be a number field with fixed complex embedding, and let~$\ell$ be an ideal in~$\OK$. Let~$(V,\phi)$ be an abstract module of weakly holomorphic modular forms for~$\Ga \subseteq \Mp{1}(\ZZ)$ realized in weight~$k \in \frac{1}{2}\ZZ$. Assume that~$\ker_{\Ga}(V)$ is a congruence subgroup of level~$N$. Let $N_\ell$ be the smallest positive integer such that
\begin{gather}
\label{eq:thm:ell-kernel-representation:Nell}
  \gcd\big(\ell, N \slash N_\ell \big) = 1
  \tx{ if\/ $k \in \ZZ$}
\quad\tx{and}\quad
  \gcd\big(\ell, \lcm(4, N) \slash N_\ell \big) = 1
  \tx{ if\/ $k \in \tfrac{1}{2} + \ZZ$.}
\end{gather}
Then the $\ell$\nbd kernel of\/~$(V,\phi)$ is a right-module for~$\OKell[\tdGa_0(N_\ell) \cap \Ga]$.
\end{theorem}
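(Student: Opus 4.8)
The plan is to reduce the theorem first to a statement about the Fourier expansions of a single modular form, and then to the geometry of the special fibre of a modular curve at the primes dividing~$N$.

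\textbf{Reduction to one modular form.} The set $\ellker\big((V,\phi)\big)$ is automatically an $\OKell$\nbd submodule of~$V$, since $\rmfe\circ\phi$ is $\OKell$\nbd linear and $\ell\,\rmFE(\OKell)$ is an $\OKell$\nbd submodule of~$\rmFE(\CC)$. It therefore suffices to prove that $\ellker$ is stable under the right action $v\mapsto v|\ga$ of every $\ga\in\tdGa_0(N_\ell)\cap\Ga$. Since $\ker_\Ga(V)\trianglelefteq\Ga$ and $\phi$ is a homomorphism of $\Ga$\nbd modules, $\phi(v|\ga)=\phi(v)|_k\ga$ again lies in $\rmM^!_k\big(\ker_\Ga(V)\big)$; writing $g:=\phi(v)$, the claim becomes the following assertion about weakly holomorphic modular forms: if $\Ga'$ is a congruence subgroup of level~$N$, $g\in\rmM^!_k(\Ga')$ and $\rmfe(g)\in\ell\,\rmFE(\OKell)$, then $\rmfe(g|_k\ga)\in\ell\,\rmFE(\OKell)$ for every $\ga\in\tdGa_0(N_\ell)$. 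Note that $\tdGa_0(N_\ell)\supseteq\tdGa_0(N)$, so this is a genuine strengthening of the obvious invariance under $\ker_\Ga(V)$. We may assume $\ell=\frakp$ is a prime ideal with residue characteristic~$p$; the general case follows by localising at the primes of~$\ell$ and a routine d\'evissage on the exponents.

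\textbf{Normalisation.} Since $\tdGa(N)$ — and, for $k\in\tfrac12+\ZZ$, $\tdGa(\lcm(4,N))$ — is normal in $\Mp{1}(\ZZ)$ and contained in~$\Ga'$, both $g$ and $g|_k\ga$ lie in $\rmM^!_k\big(\tdGa(N)\big)$, respectively in $\rmM^!_k\big(\tdGa(\lcm(4,N))\big)$. Multiplying by a sufficiently high power~$\Delta^m$ of the discriminant modular form, which has rational-integer Fourier coefficients with leading coefficient~$1$, is $\SL{2}(\ZZ)$\nbd invariant in weight~$12$, and hence satisfies $(g\Delta^m)|_{k+12m}\ga=(g|_k\ga)\,\Delta^m$, we reduce to the case that~$g$ is holomorphic at every cusp; multiplying and dividing by~$\Delta^{\pm m}$ neither introduces nor removes $\frakp$\nbd divisibility of Fourier coefficients. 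This step is unchanged in half-integral weight.

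\textbf{Geometric core.} Assume $N\ge 3$ (otherwise first pass to an auxiliary level divisible by~$3$ and descend). Let $\cX/\OKell$ be the Deligne--Rapoport integral model of the modular curve of level~$N$, with modular line bundle~$\omega$ and residue field $\kappa=\OK/\frakp$; in half-integral weight one uses instead the corresponding model of the curve of level~$\lcm(4,N)$ carrying the theta multiplier, which is precisely why~$\lcm(4,N)$ enters the definition of~$N_\ell$. By the $q$\nbd expansion principle a holomorphic~$g$ with $\OKell$\nbd integral expansion at~$\infty$ extends to a global section of~$\omega^{\otimes k}$ over~$\cX$, after base change to the cyclotomic ring~$\OKell[\zeta_N]$; we descend at the end of the argument, using that the normalising factors relating cusps on the relevant component are roots of unity and that $\frakp\,\OKell[\zeta_N]\cap\OKell=\frakp\,\OKell$. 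The hypothesis $\rmfe(g)\in\frakp\,\rmFE(\OKell)$ says that $g$ vanishes modulo~$\frakp$ on the formal neighbourhood of the cusp~$\infty$ in the special fibre~$\cX_\kappa$. As $\cX_\kappa$ is reduced and the irreducible component $C_\infty\subseteq\cX_\kappa$ containing~$\infty$ is a curve on which $g\bmod\frakp$ vanishes on a nonempty open set, $g$ vanishes modulo~$\frakp$ on all of~$C_\infty$, hence at every cusp lying on~$C_\infty$. Finally, the Deligne--Rapoport description of~$\cX_\kappa$ at $p\isdiv N$ identifies the cusps lying on~$C_\infty$ with exactly the $\Ga_0(N_\ell)$\nbd translates of~$\infty$, where $N_\ell=p^{v_p(N)}$ is the $\ell$\nbd part of~$N$. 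Since $\rmfe(g|_k\ga)$, for $\ga\in\tdGa_0(N_\ell)$, is the $q$\nbd expansion of~$g$ at the cusp $\ga(\infty)\in C_\infty$ up to an $\ell$\nbd unit, it lies in $\frakp\,\rmFE(\OKell)$, which is what we had to show.

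\textbf{Main obstacle.} The crux is the last part of the geometric core: identifying the set of cusps on the irreducible component of~$\cX_\kappa$ through~$\infty$ as exactly the $\Ga_0(N_\ell)$\nbd orbit of~$\infty$, doing so uniformly over all primes $p\isdiv N$ and for both parities of the weight, and matching the analytic slash action against the algebraic $q$\nbd expansions at these cusps — the latter entailing the bookkeeping with roots of unity and coefficient fields in which the numerical condition defining~$N_\ell$ is actually used. This rests on the fine structure of special fibres of modular curves at the bad primes (Deligne--Rapoport; Katz--Mazur). Once this input is in hand, everything else above — the reduction to a single~$g$, the $\Delta$\nbd normalisation, the prime-power d\'evissage, the descent along $\OKell\subseteq\OKell[\zeta_N]$, and the half-integral variant — is formal.
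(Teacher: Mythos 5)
Your skeleton is the same as the paper's: reduce to the statement that a single weakly holomorphic form $g$ with $\rmfe(g)\in\ell\,\rmFE(\OKell)$ keeps $\ell$\nbd divisible coefficients after slashing by $\ga\in\tdGa_0(N_\ell)$, normalise by a power of~$\Delta$ to land in the holomorphic case, and then invoke Deligne--Rapoport. Where the paper simply cites Corollaire~VII.3.12 of Deligne--Rapoport (after passing through their Constructions VII.4.6--4.7 to translate between the analytic and algebraic notions of modular form, and enlarging $K$ by $N$\thdash\ roots of unity), you unfold that citation into the underlying geometry: the $q$\nbd expansion principle on the special fibre, reducedness plus irreducibility of the component $C_\infty$ through the cusp~$\infty$, and the identification of the cusps on $C_\infty$ with the $\Ga_0(N_\ell)$\nbd orbit of~$\infty$. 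That unfolding is legitimate --- it is essentially the proof of the corollaire you would otherwise cite --- and your ``main obstacle'' paragraph correctly locates where the numerical definition of $N_\ell$ is consumed. The bookkeeping you flag (roots of unity relating analytic slash expansions to algebraic $q$\nbd expansions, descent from $\OKell[\zeta_N]$) is exactly the content of the translation step the paper delegates to VII.4.6--4.7, so nothing is missing on the integral-weight side.

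The one genuine gap is your treatment of half-integral weight. You propose to work on ``the corresponding model of the curve of level $\lcm(4,N)$ carrying the theta multiplier,'' but no such integral model with a $q$\nbd expansion principle for metaplectic multiplier systems is available in Deligne--Rapoport or Katz--Mazur, and constructing one is a nontrivial project in its own right; as written, the half-integral case of the theorem is not proved. The paper avoids this entirely by an elementary device: it tensors $(V,\phi)$ with the two-dimensional $\Mp{1}(\ZZ)$\nbd module $\Theta$ spanned by the unary theta series $\theta_0$ and $\theta_1$, observing that $f|_k\,\ga$ has coefficients in $\ell\OKell$ if and only if both $f\theta_0|_{k+\frac{1}{2}}\,\ga$ and $f\theta_1|_{k+\frac{1}{2}}\,\ga$ do (the supports of $\theta_0$ and $\theta_1$ are disjoint modulo~$\ZZ$ and their leading coefficients are $\ell$\nbd units). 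Since $\ker_{\Mp{1}(\ZZ)}(\Theta)$ has level~$4$, this reduces the half-integral case to the integral-weight case at level $\lcm(4,N)$ --- which is exactly where the $\lcm(4,N)$ in the definition of $N_\ell$ comes from --- and the rest of your argument then applies verbatim. With that substitution your proof is complete and agrees with the paper's.
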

\begin{remark}
Congruences of half-integral weight modular forms were also investigated by Jochnowitz in unpublished work~\cite{jochnowitz-2004-preprint}. The special case that~$N$ and~$\ell$ are co-prime can also be inferred from her results.
\end{remark}
\begin{proof}
Fix an element~$v \in \ellker((V,\phi))$ and write~$f = \phi(v)$. We have to show that $\phi(v | \ga) = f |_k\,\ga$ has Fourier coefficients in $\ell \OKell$ for every~$\ga \in \tdGa_0(N_\ell) \cap \Ga$. We let~$\Delta$ be the Ramanujan $\Delta$\nbd function in $\rmM_{12}(\Mp{1}(\ZZ))$, and observe that~$f \Delta |_k\,\ga$ has Fourier coefficients in~$\ell \OKell$ if and only if~$f |_k\,\ga$ does. In particular, we can assume that $f$ is a modular form after replacing $\phi$ with~$v \mto \phi(v) \Delta^h$ for sufficiently large~$h \in \ZZ$.

Consider the case $k \in \frac{1}{2} + \ZZ$. Let $\Theta$ be the $\CC$\nbd vector space spanned by the theta series
\begin{gather*}
  \theta_0(\tau)
\;:=\;
  \sum_{n \in \ZZ} e(n^2 \tau)
\tx{,}\quad
  \theta_1(\tau)
\;:=\;
  \sum_{n \in \frac{1}{2} + \ZZ} e(n^2 \tau)
\tx{.}
\end{gather*}
Observe that~$\Theta$ is a representation for~$\Mp{1}(\ZZ)$ under the slash action of weight~$\frac{1}{2}$, which is isomorphic to the dual of the Weil representation associated with the qua\-dra\-tic form~$n \mto n^2$. In particular, we can and will view~$\Theta$ as an abstract module of modular forms. Then $\ker_{\Mp{1}(\ZZ)}(\Theta)$ has level~$4$. We reduce ourselves to the case of integral weight~$k$ at the expense of replacing~$N$ by~$\lcm(4,N)$ when replacing~$(V,\phi)$ with the tensor product $(V,\phi) \otimes \Theta$. Indeed, $f|_k\, \ga$ has Fourier coefficients in~$\ell \OKell$ if and only if both $f \theta_0 |_{k+\frac{1}{2}}\,\ga$ and $f \theta_1 |_{k+\frac{1}{2}}\,\ga$ do.

In the remainder of the proof, we can and will assume that~$k \in \ZZ$. Since by our assumptions~$\ker_{\Ga}((V,\phi))$ has level~$N$, $f$ is a modular form for $\Ga(N) \subseteq \SL{2}(\ZZ)$. It therefore falls under Definition~VII.3.6 of modular forms in~\cite{deligne-rapoport-1973} if we suitably enlarge~$K$ by~$N$\thdash\ roots of unity. The transition between the two notions is outlined in Construction~VII.4.6 and~VII.4.7 of~\cite{deligne-rapoport-1973}. In particular, we can apply Corollaire~VII.3.12 to compare $\pi$\nbd adic valuations of the Fourier expansion of~$f$ and~$f |_k\,\ga$ for every place~$\pi$ of~$K$ lying above~$\ell$. Using the notation by Deligne-Rapoport~\cite{deligne-rapoport-1973}, the condition on the image of their $g \in \SL{}(2, \ZZ \slash n)$ in the group~$\SL{}(2, \ZZ \slash p^m)$ translates into our condition that~$\gcd(\ell, N \slash N_\ell) = 1$.
\end{proof}

\subsection{Congruences of modular forms on arithmetic progressions}

We will identify upper triangular matrices~$\ga \in \GL{2}^+(\RR)$ with~$(\ga, \tau \mto \sqrt{\det(\ga)}) \in \Mp{1}(\RR)$.

Fix co-prime positive integers~$M$ and~$N$. Consider an abstract module of weakly holomorphic modular forms~$(V,\phi)$ for some~$\tdGa_0(N)$. Let~$v \in V$ be a $T$\nbd eigenvector, let~$\beta \in \QQ$ such that~$v | T = e(\beta) v$, and assume that~$(V,\phi)$ is defined over a ring that contains~$e(\beta \slash M)$. We define
\begin{gather}
\label{eq:def:hecke-T-eigenvectors}
  \rmT_M(v, \beta)
\;:=\;
  v \otimes
  \sum_{h \,\pmod{M}}
  e\big( - \tfrac{h \beta}{M} \big)\,
  \fraku_{\begin{psmatrix} 1 & h \\ 0 & M \end{psmatrix}}
  \,\in\,
  \rmT_M\,V
\tx{.}
\end{gather}
A brief verification shows that the sum over~$h \,\pmod{M}$ is well-defined and that we have
\begin{gather}
  \rmT_M(v,\beta) \big| T
\;=\;
  e\big(\tfrac{\beta}{M}\big)\, \rmT_M(v,\beta)
\tx{.}
\end{gather}

The image of~$\rmT_M(v,\beta)$ under~$\rmT_M\,\phi$ captures the Fourier coefficients of~$\phi(v)$ on the arithmetic progression~$M \ZZ + \beta$. We have to restrict to abstract modules of weak holomorphic modular forms for groups~$\tdGa_0(N)$, since we have defined Hecke operators at this level of generality.
\begin{proposition}
\label{prop:hecke-T-eigenvectors-fourier-expansion}	
Fix a positive integer~$M$, a rational number~$\beta$, and an abstract module of weakly holomorphic modular forms~$(V,\phi)$ for~$\tdGa_0(N) \subseteq \Mp{1}(\ZZ)$ over a ring that contains~$e(\beta \slash M)$. Fix a $T$\nbd eigenvector~$v \in V$ with eigenvalue~$e(\beta)$. Then we have
\begin{gather}
\label{eq:prop:hecke-T-eigenvectors-fourier-expansion}
  (\rmT_M\phi) \big( \rmT_M(v,\beta) \big)
\;=\;
  M^{1-\frac{k}{2}}
  \sum_{n \in \beta + M \ZZ}
  c(f;\,n)
  e\big( \tfrac{n}{M}\, \tau\big)
\tx{.}
\end{gather}
\end{proposition}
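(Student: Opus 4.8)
The plan is to unwind both sides through the defining formulas \eqref{eq:def:hecke-operator-abstract-space-of-modular-forms} and \eqref{eq:def:hecke-T-eigenvectors} and then to collapse a complete additive character sum. Write $f := \phi(v)$. First I would record what the $T$-eigenvector hypothesis says about $f$: from $v \big| T = e(\beta)v$ we get $f \big|_k T = \phi(v \big| T) = e(\beta) f$, and since $T$ is the unipotent $\begin{psmatrix} 1 & 1 \\ 0 & 1 \end{psmatrix}$ with trivial cocycle one has $f \big|_k T(\tau) = f(\tau+1)$; hence in the Fourier expansion \eqref{eq:fourier-expansion} of $f$ a nonzero coefficient $c(f;\,n)$ can occur only for $n \in \beta + \ZZ$. (If $\beta \notin \tfrac{1}{N}\ZZ$ this already forces $f = 0$, so that both sides of \eqref{eq:prop:hecke-T-eigenvectors-fourier-expansion} vanish, and we may assume $\beta \in \tfrac{1}{N}\ZZ$.)

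Next I would apply the realization map $\rmT_M\,\phi$, which by \eqref{eq:def:hecke-operator-abstract-space-of-modular-forms} sends $v \otimes \fraku_\ga$ to $\phi(v) \big|_k \ga = f \big|_k \ga$, to the explicit vector in \eqref{eq:def:hecke-T-eigenvectors}, obtaining
\begin{gather*}
  (\rmT_M\,\phi)\big( \rmT_M(v,\beta) \big)
  \;=\;
  \sum_{h \,\pmod{M}}
  e\big( -\tfrac{h\beta}{M} \big)\,
  f \big|_k \begin{psmatrix} 1 & h \\ 0 & M \end{psmatrix}
\end{gather*}
Identifying $\begin{psmatrix} 1 & h \\ 0 & M \end{psmatrix}$ with the metaplectic element $\big( \begin{psmatrix} 1 & h \\ 0 & M \end{psmatrix},\, \tau \mapsto \sqrt{M} \big)$ and invoking the slash action \eqref{eq:slash-action-mp1r}, the automorphy factor equals $M^{k/2} (\sqrt{M})^{-2k} = M^{-k/2}$, so $f \big|_k \begin{psmatrix} 1 & h \\ 0 & M \end{psmatrix}(\tau) = M^{-k/2}\, f\big( \tfrac{\tau+h}{M} \big)$. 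I would then substitute the Fourier expansion $f\big( \tfrac{\tau+h}{M} \big) = \sum_{n \in \beta + \ZZ} c(f;\,n)\, e\big( \tfrac{n}{M}\,\tau \big) e\big( \tfrac{nh}{M} \big)$ and interchange the finite sum over $h$ with the sum over $n$. This leaves the inner sum $\sum_{h \,\pmod{M}} e\big( \tfrac{h(n-\beta)}{M} \big)$, which, since $n - \beta \in \ZZ$, equals $M$ if $M \isdiv n - \beta$ and $0$ otherwise by orthogonality of the additive characters of $\ZZ \slash M\ZZ$; it thus cuts the $n$-sum down to $n \in \beta + M\ZZ$ and contributes a factor $M$. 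Together with the $M^{-k/2}$ this produces the asserted prefactor $M^{1-\frac{k}{2}}$.

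The argument is formal, so I do not expect a genuine obstacle; the one point deserving care is the bookkeeping of the metaplectic cocycle. One must check that the identification of upper triangular matrices with metaplectic elements (with $\om = \sqrt{\det}$) used above is the one implicit both in \eqref{eq:def:hecke-T-eigenvectors} and in the definition \eqref{eq:def:hecke-operator-abstract-space-of-modular-forms} of the Hecke operator, so that for half-integral $k$ the factor $(\sqrt{M})^{-2k}$ is unambiguously the positive real $M^{-k}$; and one must confirm, as already noted after \eqref{eq:def:hecke-T-eigenvectors}, that the sum over $h \,\pmod{M}$ is well defined --- replacing $h$ by $h+M$ left-multiplies $\fraku_{\begin{psmatrix} 1 & h \\ 0 & M \end{psmatrix}}$ by $\fraku_T$, which $v$ absorbs as the scalar $e(\beta)$, exactly cancelling the change $e\big( -\tfrac{(h+M)\beta}{M} \big) = e(-\beta)\, e\big( -\tfrac{h\beta}{M} \big)$ in the coefficient. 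No convergence issue arises, the $h$-sum being finite.
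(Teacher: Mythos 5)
Your proposal is correct and follows essentially the same route as the paper's proof: apply $\rmT_M\phi$ termwise, compute $e(n\tau)\big|_k \begin{psmatrix} 1 & h \\ 0 & M \end{psmatrix} = M^{-k/2} e\big(\tfrac{hn}{M}\big) e\big(\tfrac{n}{M}\tau\big)$, use the support condition $n \in \ZZ + \beta$ coming from the $T$\nbd eigenvector equation, and collapse the $h$\nbd sum by orthogonality to get the prefactor $M^{1-\frac{k}{2}}$. Your extra remarks on the metaplectic cocycle and the well-definedness of the sum over $h \,\pmod{M}$ are sound bookkeeping that the paper records just before the proposition.
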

\begin{proof}
The definition of the slash action in~\eqref{eq:slash-action-mp1r} yields
\begin{gather*}
  e(n \tau) \big|_k\, \begin{psmatrix} 1 & h \\ 0 & M \end{psmatrix}
\;=\;
  M^{-\frac{k}{2}}\,
  e\big( \tfrac{h}{M} n \big)\,
  e\big( \tfrac{n}{M}\, \tau\big)
\tx{.}
\end{gather*}
We let $f = \phi(v)$, and observe that the eigenvector equation~$v | T = e(\beta) v$ implies that the Fourier coefficients~$c(f;\,n)$ are supported on~$n \in \ZZ + \beta$. When inserting the definition of~$\rmT_M(v,\beta)$ and the Fourier expansion of~$f$, we find that
\begin{align*}
  (\rmT_M\phi) \big( \rmT_M(v,\beta) \big)
\;&=\;
  \sum_{h \,\pmod{M}}
  e\big( - \tfrac{h \beta}{M} \big)\,
  f \big|_k\, \big( \begin{psmatrix} 1 & h \\ 0 & M \end{psmatrix},\, \tau \mto \sqrt{M} \big)
\\
&=\;
  M^{-\frac{k}{2}}\,
  \sum_{n \in \ZZ + \beta}
  c(f;\,n)
  e\big( \tfrac{n}{M}\, \tau\big)
  \sum_{h \,\pmod{M}}
  e\big( \tfrac{h(n - \beta)}{M} \big)
\\
&=\;
  M^{1-\frac{k}{2}}\,
  \sum_{\substack{n \in \ZZ + \beta\\n-\beta \equiv 0 \,\pmod{M}}}
  c(f;\,n)
  e\big( \tfrac{n}{M}\, \tau\big)
\tx{.}
\end{align*}
\end{proof}

As a consequence of Proposition~\ref{prop:hecke-T-eigenvectors-fourier-expansion}, Ramanujan-type congruences for a modular form~$f$ on arithmetic projections~$M \ZZ + \beta$ correspond to vectors $\rmT_M(v,\beta)$ in the $\ell$\nbd kernel of $\rmT_M\, \CC\,f$. The next corollary describes this connection in a precise way.
\begin{corollary}
\label{cor:congruence-on-arithmetic-progression-hecke-T-eigenvector}
Fix a positive integer~$M$, a rational number~$\beta$, a number field $K \subset \CC$ that contains~$e(\beta \slash M)$, and an ideal~$\ell \subseteq \OK$ that is co-prime to~$M$. Consider an abstract module of weakly holomorphic modular forms~$(V,\phi)$ for~$\Ga \subseteq \Mp{1}(\ZZ)$ over~$\OKell$. Fix a $T$\nbd eigenvector~$v \in V$ with eigenvalue~$e(\beta)$. Then the Ramanujan-type congruence
\begin{gather*}
\label{eq:cor:congruence-on-arithmetic-progression-hecke-T-eigenvector:arithmetic-progression}
  \forall n \in \ZZ :\,
  c(f;\,M n + \beta) \equiv 0 \;\pmod{\ell}
\end{gather*}
for the modular form~$f = \phi(v)$ is equivalent to
\begin{gather*}
\label{eq:cor:congruence-on-arithmetic-progression-hecke-T-eigenvector:hecke}
  \rmT_M(v,\beta)
\in
  \ellker\big( \rmT_M (V,\phi) \big)
\tx{.}
\end{gather*}
\end{corollary}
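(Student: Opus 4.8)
The plan is to read the statement off Proposition~\ref{prop:hecke-T-eigenvectors-fourier-expansion} together with the definition~\eqref{eq:def:ell-kernel} of the $\ell$\nbd kernel. Write $f = \phi(v)$. Since $v$ is a $T$\nbd eigenvector with eigenvalue $e(\beta)$ and $(V,\phi)$ is defined over $\OKell \ni e(\beta/M)$, the vector $\rmT_M(v,\beta)$ of~\eqref{eq:def:hecke-T-eigenvectors} lies in the $\OKell$\nbd module $\rmT_M\,V$ (its well-definedness having been verified right after~\eqref{eq:def:hecke-T-eigenvectors}), so it makes sense to ask whether it belongs to $\ellker(\rmT_M(V,\phi))$. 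Proposition~\ref{prop:hecke-T-eigenvectors-fourier-expansion} then identifies the Fourier expansion of its image as
\begin{gather*}
  \big(\rmfe \circ \rmT_M\phi\big)\big( \rmT_M(v,\beta) \big)
  \;=\;
  M^{1 - \frac{k}{2}} \sum_{n \in \beta + M\ZZ} c(f;\,n)\, q^{n/M}.
\end{gather*}

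By~\eqref{eq:def:ell-kernel}, the condition $\rmT_M(v,\beta) \in \ellker(\rmT_M(V,\phi))$ says exactly that the Puiseux series on the right lies in $\ell\,\rmFE(\OKell)$, i.e.\ that $M^{1-k/2}\,c(f;\,n) \in \ell\,\OKell$ for every $n \equiv \beta \pmod{M}$ (and these are the only coefficients that can be nonzero, by the $T$\nbd eigenvector property of $v$). This is the one point where the hypothesis $\gcd(\ell, M) = 1$ enters: the scalar $M^{1-k/2}$ has $\ell$\nbd adic valuation $0$ at every place of $K$ above $\ell$, hence is a unit modulo $\ell$, so the displayed condition is equivalent to $c(f;\,n) \in \ell\,\OKell$ for all $n \equiv \beta \pmod{M}$. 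Substituting $n = M n' + \beta$ rephrases this as $c(f;\,M n + \beta) \equiv 0 \pmod{\ell}$ for all $n \in \ZZ$, which is the asserted equivalence.

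This is essentially a one-line deduction once Proposition~\ref{prop:hecke-T-eigenvectors-fourier-expansion} is in hand, and there is no genuine obstacle; all of the analytic content has been pushed into that proposition. The only matters requiring care are bookkeeping: checking that $\rmT_M(v,\beta)$ lives in the $\OKell$\nbd lattice whose $\ell$\nbd kernel we are taking (so that the membership question is even meaningful), and tracking the normalizing factor $M^{1-k/2}$ carefully enough to see that the coprimality of $\ell$ and $M$ renders it invisible modulo $\ell$. I would therefore present the argument in a few lines, explicitly flagging $\gcd(\ell, M) = 1$ as the single hypothesis actually used.
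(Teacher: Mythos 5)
Your proposal is correct and matches the paper's (implicit) argument exactly: the paper states the corollary without a separate proof, presenting it as an immediate consequence of Proposition~\ref{prop:hecke-T-eigenvectors-fourier-expansion} together with the definition of the $\ell$\nbd kernel, with the coprimality of $\ell$ and $M$ making the factor $M^{1-k/2}$ harmless. Your write-up simply makes that one-line deduction explicit, including the correct identification of where $\gcd(\ell,M)=1$ is used.
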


The next proposition allows us to focus on the case of prime powers~$M$ when analyzing $\ell$\nbd kernels and Ramanujan-type congruences in the second part of this series of papers.
\begin{proposition}
\label{prop:composition-of-hecke-operators}
Fix co-prime positive integers~$M_1$ and~$M_2$, a rational number~$\beta$, and an abstract module of weakly holomorphic modular forms~$(V,\phi)$ for~$\tdGa_0(N) \subseteq \Mp{1}(\ZZ)$ over a ring that contains~$e(\beta \slash M_1 M_2)$. Assume that~$T \in \Ga$ and fix a $T$\nbd eigen\-vector~$v \in V$ with eigenvalue~$e(\beta)$. Then under the isomorphism in~\eqref{eq:hecke-operator-decomposition} we have
\begin{gather*}
  \rmT_{M_1}\big( \rmT_{M_2}(v, \beta), \tfrac{\beta}{M_2} \big)
\lmto
  \rmT_{M_1 M_2}(v, \beta)
\tx{.}
\end{gather*}

In particular, in the case that $(V,\phi)$ is defined over~$\OKell$ for a number field $K \subset \CC$ that contains~$e(\beta \slash M_1 M_2)$ with an ideal~$\ell \subseteq \OK$ that is co-prime to~$M_1 M_2$, we have that
\begin{gather*}
  \rmT_{M_1}\big( \rmT_{M_2}(v, \beta), \tfrac{\beta}{M_2} \big)
\in
  \ellker(\rmT_{M_1}\, \rmT_{M_2}\, V)
\end{gather*}
is equivalent to
\begin{gather*}
  \rmT_{M_1 M_2}(v, \beta)
\in
  \ellker(\rmT_{M_1 M_2}\, V)
\tx{.}
\end{gather*}
\end{proposition}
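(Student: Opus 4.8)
The plan is to prove the displayed identity by unfolding both Hecke--$T$\nbd eigenvectors into their defining coset sums and matching coefficients term by term; the $\ell$\nbd kernel statement then drops out formally. First I would record that $w := \rmT_{M_2}(v,\beta) \in \rmT_{M_2}\,V$ is itself a $T$\nbd eigenvector with eigenvalue $e(\beta\slash M_2)$, which is the displayed consequence of~\eqref{eq:def:hecke-T-eigenvectors}, and that the ambient ring contains $e((\beta\slash M_2)\slash M_1) = e(\beta\slash M_1 M_2)$, so $\rmT_{M_1}(w,\beta\slash M_2)$ is legitimately defined. Substituting the definition of $w$ into that of $\rmT_{M_1}(w,\beta\slash M_2)$ then writes $\rmT_{M_1}\big(\rmT_{M_2}(v,\beta),\beta\slash M_2\big)$ as a double sum over $h_1 \pmod{M_1}$ and $h_2\pmod{M_2}$ of the vectors $\big(v\otimes\fraku_{\begin{psmatrix}1&h_2\\0&M_2\end{psmatrix}}\big)\otimes\fraku_{\begin{psmatrix}1&h_1\\0&M_1\end{psmatrix}}$ weighted by $e(-h_1\beta\slash M_1M_2)\,e(-h_2\beta\slash M_2)$.

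Next I would apply the isomorphism~\eqref{eq:hecke-operator-decomposition}, which replaces each pair of coset representatives by their product. The only computation is $\begin{psmatrix}1&h_2\\0&M_2\end{psmatrix}\begin{psmatrix}1&h_1\\0&M_1\end{psmatrix} = \begin{psmatrix}1&h_1+M_1h_2\\0&M_1M_2\end{psmatrix}$, where, under the convention identifying upper triangular rational matrices with metaplectic elements, the two branches $\tau\mapsto\sqrt{M_2}$ and $\tau\mapsto\sqrt{M_1}$ multiply to the constant $\tau\mapsto\sqrt{M_1M_2}$, so no cocycle correction appears. Reindexing by $h := h_1 + M_1 h_2$, which is a bijection $\ZZ\slash M_1 \times \ZZ\slash M_2 \to \ZZ\slash M_1M_2$, and using the exact identity $-\tfrac{h_1\beta}{M_1M_2} - \tfrac{h_2\beta}{M_2} = -\tfrac{(h_1+M_1h_2)\beta}{M_1M_2} = -\tfrac{h\beta}{M_1M_2}$ collapses the double sum to $\sum_{h\pmod{M_1M_2}} e(-h\beta\slash M_1M_2)\, v\otimes\fraku_{\begin{psmatrix}1&h\\0&M_1M_2\end{psmatrix}} = \rmT_{M_1M_2}(v,\beta)$, which is the first assertion.

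For the ``in particular'' I would argue that~\eqref{eq:hecke-operator-decomposition} is an isomorphism of abstract modules of weakly holomorphic modular forms, not merely of $\Mp{1}(\ZZ)$\nbd modules: a one-line check gives $(\rmT_{M_1}\rmT_{M_2}\phi)\big((v\otimes\fraku_{\ga_2})\otimes\fraku_{\ga_1}\big) = \phi(v)\big|_k(\ga_2\ga_1) = (\rmT_{M_1M_2}\phi)\big(v\otimes\fraku_{\ga_2\ga_1}\big)$, so it intertwines the two realization maps and hence the Fourier expansion maps $\rmfe\circ\rmT_{M_1}\rmT_{M_2}\phi$ and $\rmfe\circ\rmT_{M_1M_2}\phi$. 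Since it is given by an integral formula on basis vectors it is defined over $\OKell$, so it restricts to an $\OKell$\nbd linear bijection carrying $\ellker(\rmT_{M_1}\rmT_{M_2}V)$ onto $\ellker(\rmT_{M_1M_2}V)$. Combined with the first assertion, $\rmT_{M_1}\big(\rmT_{M_2}(v,\beta),\tfrac{\beta}{M_2}\big)$ lies in the former if and only if $\rmT_{M_1M_2}(v,\beta)$ lies in the latter.

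I expect no serious obstacle: the content is bookkeeping. The two points that deserve care are (i) confirming that $h = h_1 + M_1 h_2$ is a bijection onto $\ZZ\slash M_1M_2$ and that the two phase factors combine on the nose, not merely modulo $1$, so that $\rmT_{M_1M_2}(v,\beta)$ is reproduced exactly; and (ii) checking that the identification of upper triangular rational matrices with metaplectic elements is compatible with the product in~\eqref{eq:hecke-operator-decomposition}, so that passing from the iterated Hecke operator to the single one introduces no spurious square-root cocycle. Both are immediate here because all determinants occurring are positive integers.
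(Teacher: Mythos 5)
Your proposal is correct and follows essentially the same route as the paper's own proof: unfold both defining sums, push through the isomorphism~\eqref{eq:hecke-operator-decomposition}, compute $\begin{psmatrix}1&h_2\\0&M_2\end{psmatrix}\begin{psmatrix}1&h_1\\0&M_1\end{psmatrix}=\begin{psmatrix}1&h_1+M_1h_2\\0&M_1M_2\end{psmatrix}$, combine the phases exactly, and reindex by $h=h_1+M_1h_2$. Your additional verification that the isomorphism intertwines the realization maps (and hence preserves $\ell$\nbd kernels) correctly supplies the ``in particular'' step, which the paper leaves implicit.
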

\begin{proof}
We insert the defining expression in~\eqref{eq:def:hecke-T-eigenvectors}, and in the next to last step rewrite the integer~$h_1 + M_1 h_2$, which is well-defined modulo~$M_1 M_2$, as $h \,\pmod{M_1 M_2}$:
\begin{alignat*}{2}
  \rmT_{M_1}\big( \rmT_{M_2}(v, \beta), \tfrac{\beta}{M_2} \big)
&\;=\;&&
  \rmT_{M_2}(v, \beta)
  \,\otimes\,
  \sum_{h_1 \,\pmod{M_1}}
  e\big(-\tfrac{h_1 \beta}{M_1 M_2}\big)\,
  \fraku_{\begin{psmatrix} 1 & h_1 \\ 0 & M_1 \end{psmatrix}}
\\
&\;=\;&&
  v
  \,\otimes\,
  \sum_{h_2 \,\pmod{M_2}}
  e\big(-\tfrac{h_2 \beta}{M_2}\big)\,
  \fraku_{\begin{psmatrix} 1 & h_2 \\ 0 & M_2 \end{psmatrix}}
  \,\otimes\,
  \sum_{h_1 \,\pmod{M_1}}
  e\big(-\tfrac{h_1 \beta}{M_1 M_2}\big)\,
  \fraku_{\begin{psmatrix} 1 & h_1 \\ 0 & M_1 \end{psmatrix}}
\\
&{}\lmto{}&&
  v
  \,\otimes\,
  \sum_{\substack{h_1 \,\pmod{M_1}\\h_2 \,\pmod{M_2}}}
  e\big(-\tfrac{h_2 \beta}{M_2}
        -\tfrac{h_1 \beta}{M_1 M_2}\big)\,
  \fraku_{\begin{psmatrix} 1 & h_2 \\ 0 & M_2 \end{psmatrix}
          \begin{psmatrix} 1 & h_1 \\ 0 & M_1 \end{psmatrix}}
\\
&\;=\;&&
  v
  \,\otimes\,
  \sum_{\substack{h_1 \,\pmod{M_1}\\h_2 \,\pmod{M_2}}}
  e\big(-\tfrac{(h_1 + M_1 h_2)\beta}{M_1 M_2}\big)\,
  \fraku_{\begin{psmatrix} 1 & h_1 + M_1 h_2 \\ 0 & M_1 M_2 \end{psmatrix}}
\\
&\;=\;&&
  v
  \,\otimes\,
  \sum_{h \,\pmod{M_1 M_2}}
  e\big(-\tfrac{h \beta}{M_1 M_2}\big)\,
  \fraku_{\begin{psmatrix} 1 & h \\ 0 & M_1 M_2 \end{psmatrix}}
\;=\;
  \rmT_{M_1 M_2}( v, \beta)
\tx{.}
\end{alignat*}
\end{proof}

\subsection{Nontrivial congruences}
\label{ssec:nontrivial-congruences}

It is hard in many cases to decipher which vectors are contained in the $\ell$\nbd kernel. We saw in Corollary~\ref{cor:congruence-on-arithmetic-progression-hecke-T-eigenvector} that a general answer would include a description of all Ramanujan-type congruences. Theorem~\ref{thm:ell-kernel-representation} allows us to endow ``extra'' congruences with more structure.

More precisely, let~$V_\ell = V$ if $V$ is defined over~$\OKell$ or more generally let~$V_\ell \subseteq V$ be an~$\OKell[\tdGa_0(N_\ell) \cap \Ga]$\nbd structure that is compatible with Fourier expansions in the sense that $\rmfe( \phi( V_\ell )) \subseteq \rmFE( \OKell )$. Here, we maintain the assumptions on~$(V,\phi)$ from Theorem~\ref{thm:ell-kernel-representation} and let~$N_\ell$ be as in~\eqref{eq:thm:ell-kernel-representation:Nell}. In applications, $V_\ell$ will consist of vectors whose realization a priori has $\ell$\nbd integral Fourier coefficients.

Assume that~$\ell \subset \OK$ is a prime ideal with associated residue field~$\Fell$. We consider the $\Fell[\Ga]$\nbd module
\begin{gather*}
  \ov{V}_\ell
\;:=\;
  V_\ell \slash \ell V_\ell
\end{gather*}
and the following right-module for $\Fell[\tdGa_0(N_\ell) \cap \Ga]$:
\begin{gather}
\label{eq:def:ovell-kernel}
\begin{aligned}
  \ovellker\big( (V,\phi) \big)
\;:=\;&
  \big( \ellker\big( (V,\phi) \big) \cap V_\ell \big)
  \,\big\slash\,
  \big( \ell V_\ell \cap \ellker\big( (V,\phi) \big) \big)
\lhra
  \ov{V}_\ell
\tx{.}
\end{aligned}
\end{gather}
If, as suggested in the preceding paragraph, $V_\ell$ comprises all a priori known vectors with $\ell$\nbd integral Fourier expansion, then~$\ov{V}_\ell$ tracks modular forms up to congruence modulo~$\ell$ and any nontrivial vector in~$\ovellker((V,\phi))$ yields additional congruences.

\begin{example}
\label{ex:ovell-kernel-half-integral-weight:atkin}
Isomorphism~\eqref{eq:def:ovell-kernel} can be fruitfully employed to endow Ramanujan-type congruences for half integral weight modular forms with extra structure. As an example, we investigate the classical congruence
\begin{gather*}
  \forall n \in \ZZ \,:\,
  p(11^3\, 13 n + 237)
  \equiv
  0
  \;\pmod{13}
\end{gather*}
of the partition function~$p(n)$ discovered by Atkin~\cite{atkin-1968}. He observed that it is one of 30~congruences modulo~$\ell = 13$ on arithmetic progressions~$11^3\, 13 \ZZ + b$. The classes of integers~$b$ that appear are those for which~$b - \frac{1}{24}$ lies in the same square class modulo~$11^3\, 13$ as~$237 - \frac{1}{24}$. A connection among congruences for the partition function following the same pattern was established by Radu~\cite{radu-2012}, and will be extended to arbitrary modular forms in the second paper of this series.

The $\ell$\nbd kernel yields slightly more information on Atkin's congruences. Specifically, there is a cusp form~$f$ of weight~$\frac{\ell^2}{2} - 1$ such that
\begin{gather*}
  c\big( f, \tfrac{n}{24} \big)
\equiv
  p\big( \tfrac{n+1}{24} \big)
  \;\pmod{\ell}
\tx{,}\quad
  \tx{if }\left(\mfrac{-n}{\ell}\right) = -1
\tx{;}\qquad
  c\big( f, \tfrac{n}{24} \big)
\equiv
  0
  \;\pmod{\ell}
\tx{,}\quad
  \tx{otherwise.}
\end{gather*}

Now~$f$ is a modular form for the character~$\ov{\chi}_\eta$ of~$\Mp{1}(\ZZ)$, where~$\chi_\eta$ is the character for the Dedekind~$\eta$\nbd function. We conclude that the abstract space of modular forms~$\CC\,f$ is isomorphic to~$\chi_\eta$ as a representation of~$\Mp{1}(\ZZ)$.

Let~$M = 11^3$, and observe that Atkin's congruence corresponds to the vector
\begin{gather*}
 \rmT_M(f,\beta)
\in
  \ellker \big( \rmT_M(\CC\,f) \big)
\tx{,}\quad
  \beta \in \ZZ - \tfrac{1}{24}
\tx{,}\,
  \beta \equiv 7 \cdot 11^2 \,\pmod{11^3}
\tx{.}
\end{gather*}
The representation generated by this vector is~$6$\nbd dimensional, and by Theorem~\ref{thm:ell-kernel-representation} it is a subrepresentation of the~$\ell$\nbd kernel of~$\rmT_M\,\CC\,f$. The five congruence classes modulo~$11$ that are non-squares yield five vectors~$\rmT_M(f, \beta')$ for various~$\beta'$ in this subrepresentation, and each of them gives one of Atkin's Ramanujan-type congruences. We could combine this with a computation of $\rmT_{13}\,\CC\,f$ to explain all~30 of Atkin's Ra\-ma\-nu\-jan-type congruences.

In addition, to the $5$\nbd dimensional space that the above vectors~$\rmT_M(f, \beta')$ span, we find another vector that corresponds to a congruence of~$f$ under a specific Hecke-like operator.
\end{example}

\begin{example}
\label{ex:ovell-kernel-half-integral-weight:atkin-hypothetical}
We continue Example~\ref{ex:ovell-kernel-half-integral-weight:atkin}. Assume that we have a weakly holomorphic modular form~$f$ of weight~$\frac{13^2}{2} - 1$ for~$\ov\chi_\eta$ and the congruence
\begin{gather*}
  \forall n \in \ZZ \,:\,
  c(f; 11^3\, 13 n + 1810 - \tfrac{1}{24})
  \equiv
  0
  \;\pmod{13}
\tx{.}
\end{gather*}
The resulting subrepresentation of~$\ovellker (\rmT_M(\CC\,f))$ would be~$6$\nbd dimensional, subsuming five Ramanujan-type congruences and one congruence with respect to a Hecke-like operator. This stands in perfect analogy with the computation in Example~\ref{ex:ovell-kernel-half-integral-weight:atkin}.

When joining these calculations
one of the missing square classes modulo~$13^3$ appears. Assume that~$f$ also satisfies the analogues of Atkin's partition congruences in Example~\ref{ex:ovell-kernel-half-integral-weight:atkin}. This would result in a~$12$\nbd dimensional subrepresentation of the~$\ell$\nbd kernel. This~$12$\nbd dimensional representation contains the vector~$\rmT_M(f, 11^4)$. Rephrased in classical terms, a congruence on the arithmetic progression~$11^3\, 13 \ZZ + 237 - \frac{1}{24}$ when coexisting with a congruence on~$11^3\, 13 \ZZ + 1810 - \frac{1}{24}$, already implies a congruence on the much larger progression~$11^2\, 13 \ZZ + 237 - \frac{1}{24}$. We suspect that this is a general phenomenon for Ramanujan-type congruences for, say, the partition function on~$q^3 \ell \ZZ + b$ for prime numbers~$q$. It stands in stark contrast to the congruences on~$q^4 \ell \ZZ + b$, which we examine next.
\end{example}

\begin{example}
\label{ex:ovell-kernel-half-integral-weight:ono}
Ono~\cite{ono-2000} and Ahlgren-Ono~\cite{ahlgren-ono-2001} discovered that there are infinitely many primes~$q$ and integers~$b$ for which we have Ramanujan-type congruences with gaps
\begin{gather*}
  \forall n \in \ZZ \setminus q \ZZ \,:\,
  p(q^3\, \ell n + b)
  \equiv
  0
  \;\pmod{\ell}
\tx{.}
\end{gather*}
They lead to $q-1$ distinct Ramanujan-type congruences
\begin{gather*}
  \forall m \in \ZZ \setminus q \ZZ\;
  \forall n \in \ZZ \,:\,
  p(q^4\, \ell n + b + q^3\, \ell m)
  \equiv
  0
  \;\pmod{\ell}
\tx{.}
\end{gather*}
These are grouped in exactly the way that we suggested in Example~\ref{ex:ovell-kernel-half-integral-weight:atkin-hypothetical} should not happen for Ramanujan-type congruences on~$q^3\,\ell \ZZ + b$. A computation reveals that the~$\ell$\nbd kernel prescribes exactly opposite behavior of Ramanujan-type congruences on~$q^3\,\ell \ZZ + b$ and~$q^4\,\ell \ZZ + b$.

For simplicity, and to keep dimensions of~$\rmT_M\,\CC\,f$ sufficiently low (this will be a representation of dimension~$15\,972$), we will investigate the case~$q=11$ and~$M=11^4$ for a modular form~$f$ as in Examples~\ref{ex:ovell-kernel-half-integral-weight:atkin} or~\ref{ex:ovell-kernel-half-integral-weight:atkin-hypothetical}. Let us assume that the~$\ell$\nbd kernel contains, for example, the vector~$\rmT_M(f,\beta)$ with $\beta \in \ZZ - \frac{1}{24}$ and~$\beta \equiv 11^3 \,\pmod{11^4}$. As opposed to Example~\ref{ex:ovell-kernel-half-integral-weight:atkin}, this generates an~$11$\nbd dimensional representation, which contains~$\rmT_M(f,\beta')$ for all $\beta' \in \ZZ - \frac{1}{24}$ with~$11^3 \isdiv \beta$ and~$11^4 \nisdiv \beta$, i.e.\@ $11^3 \| \beta$.

This suggests that Ramanujan-type congruences on~$q^4 \ell \ZZ + b$ imply Ramanujan-type congruences with gap on~$q^3 \ell (\ZZ \setminus q \ZZ) + q^4 b'$ with~$q^4 b' \equiv b \,\pmod{\ell}$. An analogous statement for integral weights is available in Theorem~\ref{thm:structure-of-ramanujan-type-congruences}.
\end{example}

\begin{example}
\label{ex:ovell-kernel-half-integral-weight:ono-hypothetical}
To complement our discussion in Example~\ref{ex:ovell-kernel-half-integral-weight:ono} we examine the consequences of a congruence
\begin{gather*}
  \forall n \in \ZZ \,:\,
  c(f; q^4\, \ell n + b - \tfrac{1}{24})
\end{gather*}
where $q^4 \isdiv (b - \frac{1}{24})$.

We restrict to the case of~$q = 11$. The associated vector in the~$\ell$\nbd kernel of the abstract space of modular forms~$\rmT_M\,\CC\,f$ generates a~$12$\nbd dimensional subrepresentation, which contains the one that we encountered in Example~\ref{ex:ovell-kernel-half-integral-weight:ono}. In other words, it implies by virtue of Theorem~\ref{thm:ell-kernel-representation} alone that there is a congruence
\begin{gather*}
  \forall n \in \ZZ \,:\,
  c(f; q^3\, \ell n + b - \tfrac{1}{24})
\tx{,}
\end{gather*}
which then by the calculations in Examples~\ref{ex:ovell-kernel-half-integral-weight:atkin} and~\ref{ex:ovell-kernel-half-integral-weight:atkin-hypothetical} implies a congruence 
\begin{gather*}
  \forall n \in \ZZ \,:\,
  c(f; q^2\, \ell n + b - \tfrac{1}{24})
\tx{.}
\end{gather*}
Continuing this chain of calculations one shows that this would eventually imply
\begin{gather*}
  \forall n \in \ZZ \,:\,
  c(f; \ell n + b - \tfrac{1}{24})
\tx{.}
\end{gather*}

While our calculation is limited to~$q = 11$, it provides evidence that Ramanujan-type congruence for the partition function on~$q^4 \ell n + b$ cannot have~$q^4 \isdiv (b - \frac{1}{24})$.
\end{example}

\begin{example}
\label{ex:ovell-kernel-integral-weight}
In the case of level~$1$, Theorem~\ref{thm:structure-of-ramanujan-type-congruences} almost exhausts the information provided by~\eqref{eq:def:ovell-kernel}. For instance, consider the Ramanujan~$\Delta$ function and the associated abstract module of modular forms~$\CC \Delta$. Let~$\ell = 7$, for which~$\Delta$ is known to exhibit a~$\rmU_\ell$\nbd congruence. In other words, we have
\begin{gather*}
  \forall n \in \ZZ :\,
  c(\Delta;\,\ell n) \equiv 0 \;\pmod{\ell}
\tx{.}
\end{gather*}
In this situation, the~$\ell$\nbd kernel of~$\rmT_\ell\,\CC\Delta$ does not yield any additional information.

Specifically, let~$K$ be the~$7$\thdash\ cyclotomic field. We have a natural~$\OKell$\nbd structure
\begin{alignat*}{2}
  V_\ell
\;&:=\;&&
  \ell^{-6}\,
  \OKell \Delta \otimes \begin{psmatrix} \ell & 0 \\ 0 & 1 \end{psmatrix}
  \,+\,
  \ell^5\, 
  \OKell \big\{
  \rmT_\ell(\Delta, \beta) \,:\,
  \beta \,\pmod{\ell}
  \big\}
\\
&{}\subset{}&&
  \ell^{-6}\,
  \OKell \Delta \otimes \begin{psmatrix} \ell & 0 \\ 0 & 1 \end{psmatrix}
  \,+\,
  \ell^6\, 
  \OKell \big\{
  \Delta \otimes \begin{psmatrix} 1 & h \\ 0 & \ell \end{psmatrix} \,:\,
  h \,\pmod{\ell}
  \big\}
\tx{.}
\end{alignat*}
The~$\rmU_\ell$\nbd congruence yields
\begin{gather*}
  \ell^5\, 
  \rmT_\ell(\Delta,0)
\in
  \ellker( \CC \Delta )
\tx{,}
\end{gather*}
and therefore we have a one-dimensional, trivial representation of $\Ga_0(\ell)$:
\begin{gather*}
  \ovellker( \CC \Delta )
\;=\;
  \Fell \big\{
  \ell^5\, 
  \rmT_\ell(\Delta,0)
  \big\}
\tx{,}
\end{gather*}
where on the right hand side the vector~$\ell^5\, \rmT_\ell(\Delta,0)$ is understood as a formal generator.
\end{example}

\section{The structure of Ramanujan-type congruences}
\label{sec:structure}

In this section, we provide our main structure theorem for Ramanujan-type congruences. Most importantly, it asserts that Ramanujan-type congruence modulo~$\ell$ for modular forms of integral weight, if~$\ell \nisdiv 2$, are associated with the union of two square-classes. In other words, they yield Ramanujan-type congruences with gap.

\begin{theorem}
\label{thm:structure-of-ramanujan-type-congruences}
Fix a number field $K \subset \CC$ and a prime ideal~$\ell \subset \OK$. Let~$f$ be a weakly holomorphic modular form of integral weight for a Dirichlet character modulo~$N$ with Fourier coefficients~$c(f;\,n) \in \OKell$. Assume that~$f$ satisfies the Ramanujan-type congruence
\begin{gather}
\label{eq:thm:structure-of-ramanujan-type-congruences:congruence}
  \forall n \in \ZZ \,:\, c(f;\,M n + \beta) \equiv 0 \;\pmod{\ell}
\end{gather}
for some positive integer~$M$ and some integer~$\beta$. Consider a prime~$p \isdiv M$ that is co-prime to~$\ell N$, and factor~$M$ as~$M_p M_p^\#$ with a~$p$\nbd power~$M_p$ and~$M_p^\#$ co-prime to~$p$.

\begin{enumeratearabic}
\item
\label{it:thm:structure-of-ramanujan-type-congruences:ramanujan-type-congruence-with-gap}
If\/~$\ell \nisdiv 2$, then we have the Ramanujan-type congruence with gap
\begin{gather}
\label{eq:thm:structure-of-ramanujan-type-congruences:congruence-both-square-classes}
  \forall n \in \ZZ \setminus p \ZZ \,:\,
  c(f;\,(M \slash p) n + M_p \beta') \equiv 0 \;\pmod{\ell}
\quad
\tx{with }
  M_p \beta' \equiv \beta \;\pmod{M_p^\#}
\tx{.}
\end{gather}

\item
\label{it:thm:structure-of-ramanujan-type-congruences:square-free}
We have
\begin{gather}
\label{eq:thm:structure-of-ramanujan-type-congruences:square-free}
  \forall n \in \ZZ \,:\,
  c(f;\,M' n + \beta) \equiv 0 \;\pmod{\ell}
\quad
\tx{with }
  M' = \gcd(M, M_{\rms\rmf} \beta )
\tx{,}
\end{gather}
where~$M_{\rms\rmf}$ is the largest square-free divisor of\/~$M$.

\item
\label{it:thm:structure-of-ramanujan-type-congruences:beta-equiv-zero}
If\/~$M_p \isdiv \beta$, then we have the Ramanujan-type congruence
\begin{gather}
\label{eq:thm:structure-of-ramanujan-type-congruences:congruence-remove-prime:Up}
  \forall n \in \ZZ \,:\,
  c(f;\,M_p^\# n + \beta) \equiv 0 \;\pmod{\ell}
\tx{.}
\end{gather}

If~$p^2 \nisdiv M$ and~$\ell \nisdiv 2$, then we have the congruence
\begin{gather}
\label{eq:thm:structure-of-ramanujan-type-congruences:congruence-remove-prime:squarefree}
  \forall n \in \ZZ, M_p^\# n + \beta \ne 0 \,:\,
  c(f;\,M_p^\# n + \beta) \equiv 0 \;\pmod{\ell}
\tx{.}
\end{gather}
\end{enumeratearabic}
\end{theorem}

\begin{remark}
If~$\ell$ is an odd rational prime and~$f$ is a cusp form, the congruence in~\eqref{eq:thm:structure-of-ramanujan-type-congruences:congruence-remove-prime:squarefree} means that we can remove all primes~$p \ne \ell$ from~$M$ that do not appear in the level~$N$ and for which~$\beta$ has highest possible $p$\nbd divisibility or for which~$p$ exactly divides~$M$. Compare this with Proposition~\ref{prop:integral-weight-p-congruences-treneer}, which goes back to Treneer~\cite{treneer-2006}, for examples of Ra\-ma\-nu\-jan-type congruences where~$p^2$ exactly divides~$M$.
\end{remark}

We postpone the proof of Theorem~\ref{thm:structure-of-ramanujan-type-congruences} in favor of two examples illustrating the limits of statements like the ones in Theorem~\ref{thm:structure-of-ramanujan-type-congruences}. They show that the condition that~$p$ be co-prime to~$\ell N$ in~\eqref{eq:thm:structure-of-ramanujan-type-congruences:congruence-both-square-classes} and~\eqref{eq:thm:structure-of-ramanujan-type-congruences:congruence-remove-prime:Up} is ``sharp''.

\begin{example}
It is straightforward to check with any suitable computer algebra system that
\begin{gather*}
  \forall n \in \ZZ \,:\, c(\Delta;\,\ell n) \equiv 0 \;\pmod{\ell}
\end{gather*}
for~$\ell \in \{2, 3, 5, 7, 2411\}$ (there is one more known for~$\ell = 7758337633$, which is slightly harder to verify). In this setting, $M = \ell$, $N = 1$, $\beta = 0$, $p = \ell$, but~$M_\ell = \ell$ does not allow us to strengthen the given congruence. In particular, the condition~$\ell \nisdiv p$ in~\eqref{eq:thm:structure-of-ramanujan-type-congruences:congruence-both-square-classes} and~\eqref{eq:thm:structure-of-ramanujan-type-congruences:congruence-remove-prime:Up} is necessary.
\end{example}

\begin{example}
\label{ex:hecke-operators:oldforms-twists}
The condition that~$p \nisdiv N$ in~\eqref{eq:thm:structure-of-ramanujan-type-congruences:congruence-both-square-classes} and~\eqref{eq:thm:structure-of-ramanujan-type-congruences:congruence-remove-prime:Up} is also necessary. Counter examples arise from the Ramanujan-type congruences trivially satisfied by oldforms and twists of modular forms by Dirichlet characters.

Specifically, consider a modular form~$f$ of level~$1$. Then for any prime~$p$ the oldform~$f_p(\tau) := f(p \tau)$ satisfies the Ramanujan-type congruences
\begin{gather*}
  \forall n \in \ZZ \,:\, c(f_p;\,p n + \beta) \equiv 0 \;\pmod{\ell}
\end{gather*}
for all integers~$\beta$ co-prime to~$p$. Similarly, the twist $f_\chi(\tau) := \sum_n \chi(n) c(f;n) e(n \tau)$ of~$f$ by a Dirichlet character~$\chi$ modulo~$p$ satisfies the Ramanujan-type congruence
\begin{gather*}
  \forall n \in \ZZ \,:\, c(f_\chi;\,p n) \equiv 0 \;\pmod{\ell}
\tx{.}
\end{gather*}
\end{example}

We need the following proposition in the proof of Theorem~\ref{thm:structure-of-ramanujan-type-congruences}. While we suspect that it is known to some experts, we could not find it in the literature. Its proof is straightforward and could in fact be carried out without reference to abstract spaces of modular forms, relying directly on the work of Deligne-Rapoport~\cite{deligne-rapoport-1973} instead.
\begin{proposition}
\label{prop:congruences-square-free-quotient}
Fix a number field $K \subset \CC$ and an ideal~$\ell \subseteq \OK$. Let~$f$ be a weakly holomorphic modular form of integral weight for~$\Ga_0(N) \cap \Ga(N_1)$ with Fourier coefficients~$c(f;\,n) \in \OKell$  supported on~$n \in \ZZ + \beta$ for some~$\beta \in \QQ$. Assume that~$f$ satisfies the Ramanujan-type congruence
\begin{gather*}
  \forall n \in \ZZ \,:\, c(f;\,M n + \beta) \equiv 0 \;\pmod{\ell}
\end{gather*}
for some positive integer~$M$ that is co-prime to~$N_1$. Then~$\beta$ is $M$\nbd integral and for all $M$\nbd integral~$\beta' \in N_1 \ZZ + \beta$ in the same square-class modulo~$M$ as~$\beta$, we have
\begin{gather*}
  \forall n \in \ZZ \,:\, c(f;\,M n + \beta') \equiv 0 \;\pmod{\ell}
\tx{.}
\end{gather*}

In particular, we have
\begin{gather*}
  \forall n \in \ZZ \,:\, c(f;\,M' n + \beta) \equiv 0 \;\pmod{\ell}
\tx{,}
\end{gather*}
where~$M' = \gcd(M, M_{\rms\rmf} N_1 \beta )$ and $M_{\rms\rmf} = \gcd(8,M)\, \prod_{p} p$ with $p$~running through all odd prime divisors of~$M$.
\end{proposition}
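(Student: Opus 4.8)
The plan is to deduce the whole statement from the $\ell$-kernel formalism of Section~2, namely Corollary~\ref{cor:congruence-on-arithmetic-progression-hecke-T-eigenvector} together with Theorem~\ref{thm:ell-kernel-representation}; as indicated above, one could equivalently run the same computation with the genuine modular form $\sum_{n\equiv\beta\,(M)}c(f;\,n)\,e(n\tau/M)$ and invoke Corollaire~VII.3.12 of~\cite{deligne-rapoport-1973} directly, but I describe the abstract-module version. First I would record that $\beta$ is $M$\nbd integral: since $\gcd(M,N)=1$ and a modular form for $\Ga(N)\cap\Ga_0(N')$ has Fourier expansion supported on $\frac1N\ZZ$, the support hypothesis forces $\beta\in\frac1N\ZZ$, whose denominator is prime to $M$. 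After enlarging $K$ so that $e(\beta/M)\in K$, I regard $f$ as the vector $v:=f$ inside the abstract module of weakly holomorphic modular forms $(V,\phi):=\Ind_{\Ga''}^{\Mp{1}(\ZZ)}\CC f$, where $\Ga'':=\langle\Ga(N)\cap\Ga_0(N'),\,T\rangle$; the support hypothesis is precisely the statement that $v$ is a $T$\nbd eigenvector with eigenvalue $e(\beta)$, and $\phi(v)=f$. Assuming $\ell$ is prime to $M$ (the contrary case being degenerate for what follows), Corollary~\ref{cor:congruence-on-arithmetic-progression-hecke-T-eigenvector} reformulates the hypothesis as $\rmT_M(v,\beta)\in\ellker(\rmT_M(V,\phi))$. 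The kernel of $\rmT_M(V,\phi)$ is a congruence subgroup whose level is divisible only by primes dividing $M$ and the level of $f$; since $\ell\nisdiv MN$, its $\ell$\nbd primary part $N_\ell$ is prime to $M$, so by Theorem~\ref{thm:ell-kernel-representation} the module $\ellker(\rmT_M(V,\phi))$ is stable under $\tdGa_0(N_\ell)$.

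The heart of the proof is a direct computation of the action of suitably chosen $\ga\in\tdGa_0(N_\ell)$ on
\[
  \rmT_M(v,\beta)\;=\;v\otimes\sum_{h\,(M)}e\!\left(-\tfrac{h\beta}{M}\right)\fraku_{\begin{psmatrix}1&h\\0&M\end{psmatrix}}.
\]
Fix any unit $a$ modulo $M$ and choose $\ga=\begin{psmatrix}a&b\\c&d\end{psmatrix}\in\SL{2}(\ZZ)$ with $a\equiv1\pmod{N^2}$, with $b\equiv0\pmod M$, and with $c$ divisible by a sufficiently large multiple of $M\,\lcm(N,N')\,N_\ell$; such $\ga$ exists by the Chinese remainder theorem together with Dirichlet's theorem, and lies in $\tdGa_0(N_\ell)$. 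For each $h$ one factors $\begin{psmatrix}1&h\\0&M\end{psmatrix}\ga=\delta_h\begin{psmatrix}1&h'\\0&M\end{psmatrix}$ with $\delta_h\in\SL{2}(\ZZ)$ and $h'\equiv a^{-2}h\pmod M$; the divisibility imposed on $c$ and the congruence $a\equiv1\pmod{N^2}$ guarantee that each $\delta_h$ lies in $\Ga''$, so that $v\,|\,\delta_h$ equals $v$ times an explicit root of unity. Reindexing the sum by $h'$ and telescoping these roots of unity, the sum collapses to
\[
  \rmT_M(v,\beta)\,\big|\,\ga\;=\;e\!\left(\tfrac{b\beta}{M}\right)\rmT_M(v,\,a^2\beta).
\]
Since $\ellker(\rmT_M(V,\phi))$ is a $\tdGa_0(N_\ell)$\nbd module and $e(b\beta/M)\in\OKell^\times$, this places $\rmT_M(v,\,a^2\beta)$ in the $\ell$\nbd kernel, and Corollary~\ref{cor:congruence-on-arithmetic-progression-hecke-T-eigenvector} returns $c(f;\,Mn+a^2\beta)\equiv0\pmod\ell$ for all $n$. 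As $a$ runs over units modulo $M$ subject to $a\equiv1\pmod{N^2}$, the class of $a^2\beta$ modulo $MN$ runs through exactly those $\beta'\in N\ZZ+\beta$ lying in the square-class of $\beta$ modulo $M$ (here $\gcd(M,N)=1$ is used), and $Mn+a^2\beta$ and $Mn+\beta'$ traverse the same progression once $a^2\beta\equiv\beta'\pmod M$; this proves the first assertion, and hence (specialising $\beta'=\beta$) the main one.

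For the ``in particular'' clause it remains to identify, prime by prime, the union of the progressions $M\ZZ+\beta'$ over all $M$\nbd integral $\beta'\in N\ZZ+\beta$ in the square-class of $\beta$ modulo $M$. Since $N$ is prime to $M$ it contributes nothing at the relevant primes, and at a prime $p\isdiv M$ the union fills out the residue class of $\beta$ modulo $p^{\min(v_p(M),\,v_p(\beta)+v_p(M_{\rms\rmf}))}$, the point being that a $p$\nbd adic unit is a square modulo $p^a$ as soon as it is congruent to $1$ modulo $p$ (for $p$ odd) or modulo $8$ (for $p=2$), which is why $M_{\rms\rmf}$ carries one factor of each odd prime divisor of $M$ and the factor $\gcd(8,M)$ at the prime $2$. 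The resulting modulus is $\gcd(M,\,M_{\rms\rmf}N\beta)$, as claimed.

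I expect the one genuinely delicate point to be the middle paragraph: performing the factorization $\begin{psmatrix}1&h\\0&M\end{psmatrix}\ga=\delta_h\begin{psmatrix}1&h'\\0&M\end{psmatrix}$ explicitly, verifying that the chosen congruence conditions on $\ga$ keep every $\delta_h$ inside the subgroup cutting out $V$ (so that one never leaves the $\ell$\nbd kernel), and tracking the metaplectic cocycle and the various roots of unity carefully enough to see that the sum telescopes to precisely $e(b\beta/M)\,\rmT_M(v,\,a^2\beta)$ with no residual $h$\nbd dependence. Everything else — the elementary number theory behind the $\gcd$, and the two invocations of Corollary~\ref{cor:congruence-on-arithmetic-progression-hecke-T-eigenvector} — is routine.
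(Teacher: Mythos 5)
Your proposal is correct and takes essentially the same route as the paper's proof: reformulate the hypothesis as $\rmT_M(f,\beta)\in\ellker\big(\rmT_M\,\Ind\,\CC f\big)$, act by a matrix in $\Ga(N)\cap\Ga_0(M N')$ that sends $h$ to $(\text{unit})^{-2}h$ modulo $M$ while keeping each factor $\delta_h$ inside the stabilizer of $f$, and reduce the ``in particular'' clause to the fact that the square-class of a $p$\nbd adic unit modulo $p^a$ is determined by its class modulo $p$ for odd $p$ and modulo $8$ for $p=2$. The one refinement worth importing from the paper is to choose $\ga$ congruent to $\begin{psmatrix} u & 0 \\ 0 & \ov{u} \end{psmatrix}$ modulo $M N N'$ rather than imposing congruences only modulo $M$ and $N^2$: then every $\delta_h$ lands in $\Ga(N)\cap\Ga_0(N')$ on the nose and all residual roots of unity cancel exactly, so the telescoping you flag as the delicate point becomes immediate.
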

\begin{proof}
The fact that~$\beta$ is~$M$\nbd integral follows directly from~$\gcd(M,N_1) = 1$. Given~$\beta'$ as in the statement, there is~$u \in \ZZ$ invertible modulo~$M N$ and with~$u \equiv 1 \,\pmod{N_1}$ such that~$u^2 \beta \equiv \beta' \,\pmod{M}$. Let~$\ov{u} \in \ZZ$ denote the inverse of~$u$ modulo~$M N N_1$. 

To establish the first part of the proposition, we will investigate the vectors~$\rmT_M(f, \beta)$ in the abstract module of modular forms~$\rmT_M\, \Ind\,\CC\,f$. Observe that there is an element~$\ga$ in~$\Ga_0(M N) \cap \Ga(N_1)$ that is congruent to $\begin{psmatrix} u & 0 \\ 0 & \ov{u} \end{psmatrix}$ modulo~$M N N_1$. For any such~$\ga$ and~$h \in \ZZ$, we have
\begin{gather*}
  \ga^{-1}\, \begin{pmatrix} 1 & h \\ 0 & M \end{pmatrix} \,\ga
\;\equiv\;
  \begin{pmatrix} 1 & \ov{u}^2 h \\ 0 & M \end{pmatrix}
  \;\pmod{M N N_1}
\tx{.}
\end{gather*}
In particular, we have
\begin{gather*}
   \begin{pmatrix} 1 & h \\ 0 & M \end{pmatrix}
   \,\ga\,
   \begin{pmatrix} 1 & \ov{u}^2 h \\ 0 & M \end{pmatrix}^{-1}
\;\in\;
  \Ga_0(N) \cap \Ga(N_1)
\tx{.}
\end{gather*}
As a consequence, conjugating~$\rmT_M(f, \beta)$ by~$\ga$ yields~$\rmT_M(f, \beta')$. We can now apply Corollary~\ref{cor:congruence-on-arithmetic-progression-hecke-T-eigenvector} to finish the proof of the first statement.

To prove the second part, it suffices to show that the coset $N_1 \beta (\ZZ \slash M' \ZZ)$ is contained in $N_1 \beta (\ZZ \slash M \ZZ)^{\times\,2}$. We can and will replace~$M$, $M'$, and~$N_1 \beta$ by their quotients by~$\gcd(M,N_1 \beta)$. In particular, we assume that~$M$ and~$N_1 \beta$ are co-prime. Further, we replace~$N_1 \beta$ by~$\beta$ to ease our notation. 

Now the proposition is reduced to the statement that for odd primes~$p$ the square-class of a~$\beta \in \ZZ$ co-prime to~$p$ depends only on~$\beta \,\pmod{p}$, and for~$p = 2$ is depends only on~$\beta \,\pmod{8}$.
\end{proof}

Before proceeding to the proof of Theorem~\ref{thm:structure-of-ramanujan-type-congruences}, we state one more proposition, whose proof can be conducted with little extra work. Despite its fairly technical traits, we suggest to compare it with Theorems~\ref{mainthm:ramanujan-type-implies-hecke} and~\ref{thm:ramanujan-type-implies-hecke}, which only hold for cusp forms as opposed to weakly holomorphic modular forms. The statement of Proposition~\ref{prop:ramanujan-type-implies-hecke-like} requires the following modification of the usual Hecke operators. Effectively, their action differs from the usual ones by twists with values of a Dirichlet character. Given co-prime positive integers~$M$ and~$N$, we set
\begin{gather}
\label{eq:def:classical-hecke-unnormalized}
  \rmTcl_{M,MN}
\;:=\;
  \sum_{\substack{a d = M\\b \pmod{d}}} \ga_{a,d,b}
\quad\tx{and}\quad
  \rmU_{M,MN}
\;:=\;
  \sum_{\substack{b \pmod{M}}} \ga_{1,M,b}
\tx{,}
\end{gather}
where
\begin{gather*}
  \ga_{a,d,b} \in \Mat{2}(\ZZ)
\tx{,}\;
  \det(\ga_{a,d,b}) = M
\tx{,}\;
  \ga_{a,d,b} \equiv \begin{psmatrix} a & b \\ 0 & d \end{psmatrix} \;\pmod{M}
\tx{,}\;
  \ga_{a,d,b} \equiv \begin{psmatrix} 1 & 0 \\ 0 & 1 \end{psmatrix} \;\pmod{N}
\tx{.}
\end{gather*}

\begin{proposition}
\label{prop:ramanujan-type-implies-hecke-like}
Assume that~$K$, $\ell$, $f$, $N$, $M$, and~$\beta$ are as in Theorem~\ref{thm:structure-of-ramanujan-type-congruences}. Factor~$M$ as $M_{\ell N} M^\#_{\ell N}$, where $M_{\ell N}$ is the largest divisor of~$M$ such that no prime~$p \isdiv M_{\ell N}$ is co-prime to~$\ell N$. Consider the weakly holomorphic modular form with Fourier expansion
\begin{gather*}
  f_{\ell N}(\tau)
\;:=\;
  \sum_{\substack{\beta' \in \beta (\ZZ \slash M_{\ell N} \ZZ)^{\times\,2}\\n \in M_{\ell N} \ZZ + \beta'}}
  c(f;\,n) e(n \tau)
\tx{.}
\end{gather*}
With the operators~$\rmTcl$ and~$\rmU$ from~\eqref{eq:def:classical-hecke-unnormalized}, we have
\begin{gather}
\label{eq:thm:structure-of-ramanujan-type-congruences:congruence-hecke}
  f_{\ell N} \Big|
  \prod_{\substack{p \tx{ prime}\\p \isdiv M_{\ell N}^\#}}
  \big(
  (p+1) \rmU_{M_p,MN} - p \rmTcl_{M_p,MN} + p \rmTcl_{M_p \slash p^2,\, MN \slash p^2}
  \big)
\;\equiv\;
  0 \;\pmod{\ell}
\tx{,}
\end{gather}
where we suppress the last summand if~$p^2 \nisdiv M_p$.
\end{proposition}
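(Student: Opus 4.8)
The plan is to translate the Ramanujan-type congruence inherited by~$f_{\ell N}$ into a membership statement in an $\ell$\nbd kernel, to use Theorem~\ref{thm:ell-kernel-representation} so that this $\ell$\nbd kernel carries the action of a congruence subgroup acting arbitrarily at the primes dividing~$M_{\ell N}^\#$ (this is where $p \nisdiv \ell N$ enters), and finally to recognize the displayed product of Hecke operators as an $\OKell$\nbd linear combination of translates of that vector under this congruence subgroup. First I would record that $f_{\ell N}$ is again a weakly holomorphic modular form of weight~$k$ with coefficients in~$\OKell$: for each prime power exactly dividing~$M_{\ell N}$, isolating the residues in the square-class of~$\beta$ is an $\OKell$\nbd combination of the operators~$\rmUcl$ of~\eqref{eq:def:classical-hecke-unnormalized} composed with a rescaling $\tau \mapsto M_{\ell N}\tau$, so $f_{\ell N}$ is modular for a congruence subgroup $\Ga' \subseteq \Mp{1}(\ZZ)$ and character~$\chi'$ whose level is a product of primes dividing~$\ell N$, in particular coprime to~$M_{\ell N}^\#$, with $c(f_{\ell N};\,n) = c(f;\,n)$ on the selected residues modulo~$M_{\ell N}$ and~$0$ otherwise. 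Since $M_{\ell N} \isdiv M$, every index $M n + \beta$ reduces to~$\beta$ modulo~$M_{\ell N}$, so $c(f_{\ell N};\,M n + \beta) \equiv 0 \pmod{\ell}$ for all~$n \in \ZZ$. Put $(V,\phi) := \Ind^{\Mp{1}(\ZZ)}_{\Ga'}\,\OKell f_{\ell N}$ with distinguished $T$\nbd eigenvector~$v$ of eigenvalue~$e(\beta)$ (here $\beta \in \ZZ$). By Corollary~\ref{cor:congruence-on-arithmetic-progression-hecke-T-eigenvector} and Proposition~\ref{prop:composition-of-hecke-operators} with~\eqref{eq:hecke-operator-decomposition} — applied, when $\ell \isdiv M$, after setting aside the~$\ell$\nbd primary part of~$M$, which is inert under the~$A_p$ — the congruence is equivalent to $\rmT_{M_{\ell N}^\#}(v_0,\beta_0) \in \ellker\big(\rmT_{M_{\ell N}^\#}(V_0,\phi_0)\big)$, where $(V_0,\phi_0) = \rmT_{M_{\ell N}}(V,\phi)$, $v_0 = \rmT_{M_{\ell N}}(v,\beta)$, and $\beta_0 = \beta \slash M_{\ell N}$.

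The module $\rmT_{M_{\ell N}^\#}(V_0,\phi_0) \cong \rmT_M(V,\phi)$ is realized in weight~$k$ and its kernel is a congruence subgroup of some level~$L$; since each prime $p \isdiv M_{\ell N}^\#$ is coprime to~$\ell N$, it is coprime to the integer~$N_\ell$ of~\eqref{eq:thm:ell-kernel-representation:Nell}, which is supported on primes above~$\ell$. Hence Theorem~\ref{thm:ell-kernel-representation} makes $\ellker\big(\rmT_{M_{\ell N}^\#}(V_0,\phi_0)\big)$ a right-module for $\OKell[\tdGa_0(N_\ell)]$, and $\tdGa_0(N_\ell)$ surjects onto $\SL{2}\big(\ZZ\slash (M_{\ell N}^\#)^r\big)$ for every~$r$, so the $\ell$\nbd kernel is stable under all ``cusp permutations at the primes of~$M_{\ell N}^\#$''. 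Next I would untangle the displayed combination. For $M_p = p^m$ the matrices realizing $p\,\rmTcl_{M_p \slash p^2,\,MN\slash p^2}$ have the form $p\,\ga = \begin{psmatrix} p & 0 \\ 0 & p \end{psmatrix}\ga$ with $\det\ga = M_p \slash p^2$, and $\begin{psmatrix} p & 0 \\ 0 & p \end{psmatrix}$ acts trivially under $|_k$; so $-p\,\rmTcl_{M_p,MN} + p\,\rmTcl_{M_p \slash p^2,\,MN\slash p^2}$ is exactly $-p$ times the sum over the \emph{primitive} cosets of~$\rmTcl_{M_p,MN}$, and the $p$\nbd factor of~\eqref{eq:thm:structure-of-ramanujan-type-congruences:congruence-hecke} is $(p+1)\,\rmUcl_{M_p,MN}$ minus $p$ times that primitive-coset sum. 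As the $p^m$ cosets of~$\rmUcl_{p^m}$ lie among the $p^{m-1}(p+1) = [\SL{2}(\ZZ):\Ga_0(p^m)]$ primitive cosets, the inclusion~\eqref{eq:hecke-to-induced-subgroup} realizes $f_{\ell N}\big|\prod_p(\,\cdots)$ as the image under $\rmT\phi$ of a vector of $\rmT_{M_{\ell N}^\#}(V_0,\phi_0)$ lying in the induced submodule $\Ind\,\psi^\ast_{M_{\ell N}^\#}(V_0)$.

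The heart of the proof is to check that this vector is an $\OKell$\nbd linear combination of $\tdGa_0(N_\ell)$\nbd translates of~$\rmT_{M_{\ell N}^\#}(v_0,\beta_0)$: the coefficients $(p+1), -p, p$ are chosen so that, prime by prime, the primitive-coset sum minus the~$U$\nbd part collapses onto the orbit of the ``pure~$U$'' vector $\rmT_{M_{\ell N}^\#}(v_0,\beta_0)$ of~\eqref{eq:def:hecke-T-eigenvectors} under the cusp permutations obtained above, while the twists built into~$\rmTcl$ and~$\rmUcl$ through the normalization $\ga_{a,d,b} \equiv \begin{psmatrix} 1 & 0 \\ 0 & 1 \end{psmatrix} \pmod{N}$ are precisely those rendering each term $\Ga_0(N)$\nbd equivariant. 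Granting this, every such translate — and hence the whole vector — lies in $\ellker\big(\rmT_{M_{\ell N}^\#}(V_0,\phi_0)\big)$ by Theorem~\ref{thm:ell-kernel-representation}, so by Proposition~\ref{prop:hecke-T-eigenvectors-fourier-expansion} and $\OKell$\nbd linearity its realization $f_{\ell N}\big|\prod_p(\,\cdots)$ has Fourier coefficients in~$\ell\,\OKell$, which is the assertion. I expect this last identification — that ``primitive part minus~$U$\nbd part'' of the Hecke operator at~$p$ corresponds, on the image of~$\Ind\,\psi^\ast$, to summing the congruence vector over nontrivial cosets of a large~$\Ga_0$ at~$p$ — to be the main obstacle; the remainder is bookkeeping with Corollary~\ref{cor:congruence-on-arithmetic-progression-hecke-T-eigenvector}, Proposition~\ref{prop:composition-of-hecke-operators}, and Proposition~\ref{prop:hecke-T-eigenvectors-fourier-expansion}.
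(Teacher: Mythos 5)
There is a genuine gap, and you have located it yourself: the step you defer as ``the main obstacle'' is precisely the mathematical content of the proposition, and nothing in your proposal supplies it. Your bookkeeping is essentially the paper's: the factor at~$p$ of the displayed operator is $(p+1)\rmUcl_{M_p,MN}$ minus $p$ times the primitive-coset sum, the primitive cosets are in bijection with~$\bbP^1(\ZZ\slash M_p\ZZ)$, and via~\eqref{eq:hecke-to-induced-subgroup} everything lives in the induced submodule $\OKell\{\bbP^1(\ZZ\slash M\ZZ)\}$ on which $\SL{2}(\ZZ\slash M\ZZ)$ acts through $\tdGa_0(N_\ell)$ by Theorem~\ref{thm:ell-kernel-representation}. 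But the claim that the resulting vector is an $\OKell$\nbd linear combination of $\tdGa_0(N_\ell)$\nbd translates of~$\rmT_M(v,\beta)$ is not something the coefficients $(p+1),-p,p$ can be ``chosen'' to arrange by an orbit computation at the cusps: it is the assertion that the $\Fell[\SL{2}(\bbF_p)]$\nbd submodule of the permutation module $\Fell\{\bbP^1(\bbF_p)\}$ generated by $w_p=\sum_h\zeta^h\,(1:h)$ (with $\zeta$ the reduction of $e(-\beta_p\slash M)$) contains the augmentation-zero vector $\sum_{h\,(p)}(1:h)-p\,(0:1)$ of~\eqref{eq:prop:ramanujan-type-implies-hecke-like:local-component}. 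Whether and why this holds depends on the modular representation theory of~$\SL{2}(\bbF_p)$: the paper shows that the submodule generated by~$w_p$ always contains the Steinberg module, i.e.\ the full augmentation-zero subspace, by the case analysis $\ell\nisdiv(p^2-1)$, $\ell\isdiv(p-1)$, $\ell\isdiv(p+1)$ from Bonnaf\'e (the last case using $\ell\nisdiv 2$ and the socle structure of~$\mathrm{St}_p$), carried out in the proofs of parts~(1) and~(3) of Theorem~\ref{thm:structure-of-ramanujan-type-congruences}. Without that input, your argument restates what is to be proved.

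Two smaller points. First, ``inert under the $A_p$'' uses notation you never define, and routing the $\ell N$\nbd part of~$M$ through $\rmT_{M_{\ell N}}$ reintroduces the normalization $M_{\ell N}^{1-k/2}$ of Proposition~\ref{prop:hecke-T-eigenvectors-fourier-expansion}, which need not be an $\ell$\nbd adic unit; Corollary~\ref{cor:congruence-on-arithmetic-progression-hecke-T-eigenvector} is only stated for $\ell$ coprime to the modulus. The paper avoids this by replacing~$f$ with~$f_{\ell N}$ once and thereafter applying the Hecke machinery only with modulus~$M_{\ell N}^\#$. Second, for the reduction you do need to justify that $f_{\ell N}$ inherits a congruence on the progression $M_{\ell N}^\#\ZZ+\beta$ (not merely on $M\ZZ+\beta$), which is where the square-class statement of Proposition~\ref{prop:congruences-square-free-quotient} enters. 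Both of these are repairable; the Steinberg containment is not optional.
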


\subsection{Proofs of Theorem~\ref{thm:structure-of-ramanujan-type-congruences} and Proposition~\ref{prop:ramanujan-type-implies-hecke-like}}

This whole section will be occupied by the proofs of Theorem~\ref{thm:structure-of-ramanujan-type-congruences} and Proposition~\ref{prop:ramanujan-type-implies-hecke-like}. We start by providing a general setup and then give proofs for individual statements.

We can and will replace~$f$ by~$f_{\ell N}$ and may hence assume that~$M$ is co-prime to~$\ell N$. Proposition~\ref{prop:congruences-square-free-quotient} allows us to assume that~$\beta \in \ZZ \cap (M \slash p) \ZZ_p$ if~$p \isdiv M$ is odd and assume that~$\beta \in \ZZ \cap (M \slash 8) \ZZ_2$ if~$p = 2 \isdiv M$.

Consider the abstract module of modular forms~$(V,\phi) = \OKell\, f$ for~$\Ga_0(N)$ whose kernel contains~$\Ga(N)$. Observe that the module~$V$ is associated with a subgroup of~$\SL{2}(\ZZ)$ as opposed to~$\Mp{1}(\ZZ)$, since the weight of~$f$ is integral. By Corollary~\ref{cor:congruence-on-arithmetic-progression-hecke-T-eigenvector}, the Ramanujan-type congruence satisfied by~$f$ yields the vector
\begin{gather*}
  \rmT_M( f, \beta )
\in
  \ellker\big( \rmT_M\,V\big)
\tx{.}
\end{gather*}

By~\eqref{eq:hecke-to-induced-subgroup} we have the inclusion
\begin{gather}
\label{eq:thm:structure-of-ramanujan-type-congruences:hecke-to-induced}
  \Ind_{\Ga_0(M N)}^{\Ga_0(N)}\, \psi_M^\ast(V)
\lhra
  \rmT_M\,V
\tx{,}
\end{gather}
whose image contains~$\rmT_M(f,\beta)$. Since the kernel of~$\rmT_M\,V$ contains~$\Ga(M N)$, since we have an isomorphism of~$\SL{2}(\ZZ \slash M N)$ and~$\SL{2}(\ZZ \slash M) \times \SL{2}(\ZZ \slash N)$ by the Chinese remainder theorem, and the since homomorphism $\Ga_0(N) \thra \SL{2}(\ZZ \slash M \ZZ)$ arising from this isomorphism is surjective, we can and will view~$\rmT_M\,V$ as an~$\OKell[ \SL{2}(\ZZ \slash M \ZZ)]$-module.

Write~$\psi^\ast_M(f)$ for the pullback of~$f \in V$ to emphasize the difference. To ease notation, we will identify domain and co-domain of the isomorphism
\begin{gather}
\label{eq:thm:structure-of-ramanujan-type-congruences:p1-representation}
\begin{aligned}
  \Ind_{\Ga_0(M N)}^{\Ga_0(N)}\,\psi^\ast_M(V)
\;&\lra\;
  \OKell\big\{ \bbP^1(\ZZ \slash M \ZZ) \big\}
\tx{,}
\\
  \psi^\ast_M(f) \otimes \fraku_\ga
&\lmto
  (c:d) \;\pmod{M}
\tx{,}\quad
  \ga \in \Ga_0(N),\,
  \ga = \begin{psmatrix} a & b \\ c & d \end{psmatrix}
\tx{.}
\end{aligned}
\end{gather}
The co-domain is the free $\OKell$\nbd module whose basis corresponds to the elements of the projective line over~$\ZZ \slash M \ZZ$. The $\SL{2}(\ZZ \slash M \ZZ)$\nbd action on~$\bbP^1(\ZZ \slash M \ZZ)$ arises from its action on row vectors. We record that when combining~\eqref{eq:thm:structure-of-ramanujan-type-congruences:hecke-to-induced} and~\eqref{eq:thm:structure-of-ramanujan-type-congruences:p1-representation}, we have
\begin{alignat*}{7}
  \rmT_M(f, \beta)
&{}\;=\;{}&&
  f
  &&{}\otimes{}
  &&\sum_{h \,\pmod{M}}
  e\big( - \tfrac{h \beta}{M} \big)\,
  \fraku_{\begin{psmatrix} 1 & h \\ 0 & M \end{psmatrix}}
\\
&\longmapsfrom{}&&
  \psi_M^\ast(f)
  &&{}\otimes{}
  &&\sum_{h \,\pmod{M}}
  e\big( - \tfrac{h \beta}{M} \big)\,
  \fraku_{\begin{psmatrix} 1 & h \\ 0 & 1 \end{psmatrix}}
&&{}=
  \psi_M^\ast(f) \otimes{}
  &&\sum_{h \,\pmod{M}}
  e\big( - \tfrac{N h \beta}{M} \big)\,
  \fraku_{\begin{psmatrix} 1 & N h \\ 0 & 1 \end{psmatrix}}
\\
&\lmto{}&&
  &&
  &&\sum_{h \,\pmod{M}}
  e\big(-\tfrac{N h \beta}{M} \big)\,
  (1 : N h)
&&{}={}
  &&\sum_{h \,\pmod{M}}
  e\big(-\tfrac{h \beta}{M} \big)\,
  (1 : h)
\tx{.}
\end{alignat*}
In particular, $\rmT_M(f,\beta)$ lies in the image of~\eqref{eq:thm:structure-of-ramanujan-type-congruences:hecke-to-induced}. To ease notation we will refer to the right hand side by~$\rmT_M(f,\beta)$, too.

For any $M' \isdiv M$, since the action of~$\SL{2}(\ZZ \slash M \ZZ)$ preserves congruences modulo~$M'$, we have the following inclusion of~$\OKell[ \SL{2}(\ZZ \slash M \ZZ)]$\nbd modules:
\begin{gather}
\label{eq:thm:structure-of-ramanujan-type-congruences:inclusion}
  \OKell \big\{ \bbP^1(\ZZ \slash M' \ZZ) \big\}
\lhra
  \OKell \big\{ \bbP^1(\ZZ \slash M \ZZ) \big\}
\tx{,}\quad
  (c:d)
\lmto
  \sum_{(c':d') \equiv (c:d) \,\pmod{M'}}
  (c':d')
\tx{.}
\end{gather}
Moreover, the left hand side descends to a module for~$\OKell[ \SL{2}(\ZZ \slash M' \ZZ)]$ under the projection from~$\OKell[ \SL{2}(\ZZ \slash M \ZZ)]$.

The inclusion~\eqref{eq:thm:structure-of-ramanujan-type-congruences:inclusion} yields a tensor product decomposition
\begin{gather}
\label{eq:thm:structure-of-ramanujan-type-congruences:tensor-product}
  \bigotimes_{\substack{p \tx{ prime}\\p \isdiv M}}
  \OKell \big\{ \bbP^1(\ZZ \slash M_p \ZZ) \big\}
\;\cong\;
  \OKell \big\{ \bbP^1(\ZZ \slash M \ZZ) \big\}
\end{gather}
and corresponding inclusions
\begin{gather}
\label{eq:thm:structure-of-ramanujan-type-congruences:inclusion-tensor-product}
  \bigotimes_{\substack{p \tx{ prime}\\p \isdiv M\\M_p' \isdiv M_p}}
  \OKell \big\{ \bbP^1(\ZZ \slash M_p' \ZZ) \big\}
\;\lhra\;
  \OKell \big\{ \bbP^1(\ZZ \slash M \ZZ) \big\}
\tx{,}
\end{gather}
which allow to perform representation theoretic considerations for prime powers~$M_p$ instead of~$M$. We record that the preimage of~$\rmT_M(f, \beta)$ under~\eqref{eq:thm:structure-of-ramanujan-type-congruences:tensor-product} equals
\begin{gather}
\label{eq:thm:structure-of-ramanujan-type-congruences:tensor-product-vector}
  \bigotimes_{\substack{p \tx{ prime}\\p \isdiv M}}
  \Bigg(
  \sum_{h \,\pmod{M_p}}
  e\big(-\tfrac{h \beta_p}{M} \big)\,
  (1 : h)
  \Bigg)
\tx{,}\quad
\end{gather}
where~$\beta_p := \beta 1_p$ with an integer~$1_p$ satisfying ~$1_p \equiv 1 \pmod{M_p}$ and $M_p^\# \isdiv 1_p$.

\begin{proof}[{Proof of~\eqref{eq:thm:structure-of-ramanujan-type-congruences:square-free} in Theorem~\ref{thm:structure-of-ramanujan-type-congruences}}]
For the time being assume that~$p = 2$. Our assumption that $\beta \in \ZZ \cap (M \slash 8) \ZZ_2$ ensures that $\rmT_M(f,\beta)$ lies in the image of~\eqref{eq:thm:structure-of-ramanujan-type-congruences:inclusion-tensor-product} with~$M'_2= 8$. We will show that we can assume that~$M'_2 = 2$ and thus prove~\eqref{eq:thm:structure-of-ramanujan-type-congruences:square-free}. We can and will assume that $\beta$ is odd, since we can otherwise replace~$M'_2$ by~$M'_2 \slash 2$.

Consider the domain of~\eqref{eq:thm:structure-of-ramanujan-type-congruences:inclusion-tensor-product} for~$M'_2 = 4$ and~$M'_2 = 8$. We have to inspect the tensor component for~$p = 2$ of the subrepresentation of the left hand side in~\eqref{eq:thm:structure-of-ramanujan-type-congruences:inclusion-tensor-product} generated by the preimage of~$\rmT_M(f,\beta)$. Observe that~$\SL{2}(\ZZ \slash M'_2 \ZZ)$ is a~$2$\nbd group. Since~$\ell \nisdiv 2$, we can compute with the representation~$K \{ \bbP^1(\ZZ \slash M'_2 \ZZ) \}$ instead of $\OKell \{ \bbP^1(\ZZ \slash M'_2 \ZZ) \}$. Since~$K \{ \bbP^1(\ZZ \slash M'_2 \ZZ) \}$ is a permutation representation and therefore defined over~$\QQ$, we can employ Galois automorphisms at will. For this reason, it suffices to consider the case~$\beta_2 = M \slash M'_2$.

In the case of~$M'_2 = 4$ we find through a calculation that~$\rmT_M(f,\beta + 2 M \slash M'_2)$ lies in the representation generated by~$\rmT_M(f,\beta)$. In the case of~$M'_2 = 8$ we find that the three vectors~$\rmT_M(f,\beta + 2 M \slash M'_2)$, $\rmT_M(f,\beta + 4 M \slash M'_2)$, and $\rmT_M(f,\beta + 6 M \slash M'_2)$ lie in the representation generated by~$\rmT_M(f,\beta)$. Employing Corollary~\ref{cor:congruence-on-arithmetic-progression-hecke-T-eigenvector}, we finish the proof of~\eqref{eq:thm:structure-of-ramanujan-type-congruences:square-free}.
\end{proof}

We return to the case of general~$p$. We write~$\bbF_p$ for~$\ZZ \slash p \ZZ$. We can and will assume that $\beta \in \ZZ \cap (M \slash p) \ZZ_p$, which ensures that $\rmT_M(f,\beta)$ lies in the image of~\eqref{eq:thm:structure-of-ramanujan-type-congruences:inclusion-tensor-product} with~$M'_p = p$. It suffices to investigate which $\Fell[\SL{2}(\bbF_p)]$\nbd submodule of~$\Fell \{ \bbP^1(\bbF_p) \}$ is generated by~$\sum_{h \,\pmod{p}} \zeta^h (1:h)$, where~$\zeta \in \Fell$ is the reduction modulo~$\ell$ of the $p$\thdash\ root of unity~$e( -\beta_p  \slash M)$ using the fixed embedding of~$K$ into~$\CC$.

More specifically, let~$\pi_p$ be the $p$-tensor component of the subrepresentation of the left hand side of~\eqref{eq:thm:structure-of-ramanujan-type-congruences:inclusion-tensor-product} generated by the preimage of~$\rmT_M(f,\beta)$. Observe that we have~$w \in V(\pi_p)$, where $V(\pi_p)$ denotes the representation space of~$\pi_p$.

We will use the descriptions of modular representations of~$\SL{2}(\bbF_p)$ by Bonnaf\'e in Section~9.4 of~\cite{bonnafe-2011}. We write~$\mathrm{St}_p \subset \Fell\{ \bbP^1(\bbF_p) \}$ for the Steinberg representation and $v$ for the invariant vector~$\sum_{x \in \bbP^1(\bbF_p)} x$. We record that~$w$ is not a multiple of~$v$.

\begin{proof}[{Proof of~\eqref{eq:thm:structure-of-ramanujan-type-congruences:congruence-remove-prime:Up} in Theorem~\ref{thm:structure-of-ramanujan-type-congruences}}]
If~$M_p \isdiv \beta$ and therefore~$M \isdiv \beta_p$, then the vector~$w$ does not lie in~$\mathrm{St}_p$, since we have~$\ell \nisdiv p$. By Sections~9.4.2--9.4.4 of~\cite{bonnafe-2011} it generates the representation~$\Fell\{ \bbP^1(\bbF_p) \}$. In other words, we have~$\pi_p = \Fell\{ \bbP^1(\bbF_p) \}$. In particular, we find that~$\rmT_M(f, \beta + (M \slash p) m)$ lies in the $\ell$\nbd kernel of~$\rmT_M\,\CC\,f$ for all integers~$m$. We finish the proof of~\eqref{eq:thm:structure-of-ramanujan-type-congruences:congruence-remove-prime:Up} for~$M_p \isdiv \beta$ by invoking Corollary~\ref{cor:congruence-on-arithmetic-progression-hecke-T-eigenvector} to conclude the Ramanujan-type congruence
\begin{gather*}
  \forall n \in \ZZ \,:\, c(f;\,(M \slash p) n + \beta) \equiv 0 \;\pmod{\ell}
\end{gather*}
and then iterating the argument.
\end{proof}

\begin{proof}[{Proof of~\eqref{eq:thm:structure-of-ramanujan-type-congruences:congruence-both-square-classes} in Theorem~\ref{thm:structure-of-ramanujan-type-congruences}}]
Applying~\eqref{eq:thm:structure-of-ramanujan-type-congruences:congruence-remove-prime:Up} if~$M_p \isdiv \beta$, we can and will assume that we have~$M_p \nisdiv \beta$. We therefore have~$w \in \mathrm{St}_p$, since $M \nisdiv \beta_p$ and the sum over all~$p$\thdash\ roots of unity vanishes.

Consider the case of~$\ell \nisdiv (p+1)$. If $\ell \nisdiv (p-1)$ a decomposition of~$\Fell\{ \bbP^1(\bbF_p) \}$ into irreducible components follows from the $cde$\nbd triangle, which is explained for example in Section~15 of~\cite{serre-1977}. If we have~$\ell \isdiv (p-1)$ we can use Section~9.4.2 of~\cite{bonnafe-2011}, instead. We find that~$\Fell\{ \bbP^1(\bbF_p) \}$ decomposes as~$\mathrm{St}_p \oplus \Fell v$ and~$\mathrm{St}_p$ is irreducible. As a consequence, we have~$\pi_p = \mathrm{St}_p$. In particular, we see that the vector~$\rmT_M(f, \beta + (M \slash p) m)$ lies in the $\ell$\nbd kernel of~$\rmT_M\,\CC\,f$ for all integers~$m$ such that $M_p \nisdiv (\beta + (M \slash p) m)$. We apply Corollary~\ref{cor:congruence-on-arithmetic-progression-hecke-T-eigenvector} and find the Ramanujan-type congruence with gap
\begin{gather*}
  \forall n \in \ZZ \setminus p \ZZ  \,:\,
  c(f;\, (M \slash p) n + M_p \beta') \equiv 0 \;\pmod{\ell}
\end{gather*}
with~$M_p \beta' \equiv \beta \,\pmod{M_p^\#}$. This proves~\eqref{eq:thm:structure-of-ramanujan-type-congruences:congruence-both-square-classes} if~$\ell \nisdiv (p+1)$.

Consider the case of~$\ell \isdiv (p+1)$. By our assumptions we have~$\ell \nisdiv 2$. Sections~9.4.3 and~9.4.4 of~\cite{bonnafe-2011} show that~$\Fell v$ is the socle of the length~$2$ representation~$\mathrm{St}_p$ with irreducible quotient. In particular, we find that $\pi_p = \mathrm{St}_p$. We conclude that~$\rmT_M(f, \beta')$ lies in the $\ell$\nbd kernel of~$\rmT_M\,\CC\,f$ for all integers~$\beta'$ with~$\beta' \equiv \beta \,\pmod{M_p^\#}$ and~$M_p \nisdiv \beta'$. As in the case of~$\ell \nisdiv (p + 1)$, this proves the congruence in~\eqref{eq:thm:structure-of-ramanujan-type-congruences:congruence-both-square-classes}.
\end{proof}

In the proofs of the remaining statements we continue with the setup from the proof of~\eqref{eq:thm:structure-of-ramanujan-type-congruences:congruence-remove-prime:Up} and of~\eqref{eq:thm:structure-of-ramanujan-type-congruences:congruence-both-square-classes}. In particular, we use the fact that~$\mathrm{St}_p \subseteq \pi_p$. It follows that
\begin{gather}
\label{eq:prop:ramanujan-type-implies-hecke-like:local-component}
  \sum_{h \,\pmod{p}} (1:h) - p (0:1)
\,\in\,
  V(\pi_p)
\tx{.}
\end{gather}

\begin{proof}[{Proof of~\eqref{eq:thm:structure-of-ramanujan-type-congruences:congruence-remove-prime:squarefree} in Theorem~\ref{thm:structure-of-ramanujan-type-congruences}}]
Consider the case that~$p^2 \nisdiv M_p$. For simplicity, we will formulate the proof in terms of
\begin{gather*}
  f^\#_p(\tau)
\;:=\;
  \sum_{n \in M_p^\# \ZZ + \beta}
  c(f;\,n) e(n \tau)
\tx{.}
\end{gather*}

We employ induction on~$m$ to show that~$c(f^\#_p;\, p^m n) \equiv 0 \,\pmod{\ell}$ if $p \nisdiv n$, $n \in \ZZ$. By~\eqref{eq:thm:structure-of-ramanujan-type-congruences:congruence-both-square-classes}, we have that~$c(f^\#_p;\, n ) \equiv 0 \,\pmod{\ell}$, which settles the case~$m = 0$. For the induction step, we use the relation $c( f^\#_p;\, p n' ) \equiv p^k\, c( f^\#_p;\, n' \slash p ) \,\pmod{\ell}$, which follows directly from~\eqref{eq:prop:ramanujan-type-implies-hecke-like:local-component} and~$M'_p = M_p$, and is valid for all integers~$n'$.
\end{proof}

\begin{proof}[Proof of Proposition~\ref{prop:ramanujan-type-implies-hecke-like}]
When combining the vectors in~\eqref{eq:prop:ramanujan-type-implies-hecke-like:local-component} for each~$p$, the image under the homomorphism~\eqref{eq:thm:structure-of-ramanujan-type-congruences:inclusion-tensor-product} equals
\begin{gather*}
  \bigotimes_{\substack{p \tx{ prime}\\p \isdiv M}}
  \Bigg(
  \sum_{h \,\pmod{M_p}}\hspace{-.7em}
  (1:h)
  \,-\,
  p \hspace{-.5em}
  \sum_{\substack{x \in \bbP^1(\ZZ \slash M_p \ZZ)\\x \equiv (0:1) \in \bbP^1(\bbF_p)}}\hspace{-.7em}
  x
  \,\Bigg)
\;=\;
  \bigotimes_{\substack{p \tx{ prime}\\p \isdiv M}}
  \Bigg(
  (p+1) \hspace{-.5em}
  \sum_{h \,\pmod{M_p}}\hspace{-.7em}
  (1:h)
  \,-\,
  p \hspace{-.5em}
  \sum_{x \in \bbP^1(\ZZ \slash M_p \ZZ)}\hspace{-.7em}
  x
  \,\Bigg)
\tx{.}
\end{gather*}
After taking the preimage of this under~\eqref{eq:thm:structure-of-ramanujan-type-congruences:p1-representation} and then the image under~\eqref{eq:thm:structure-of-ramanujan-type-congruences:hecke-to-induced}, the first summand yields~$(p+1) \rmU_{M_p,MN}$ and the second one equals
\begin{gather*}
  p \rmTcl_{M_p,MN}
  -
  p \rmTcl_{M_p \slash p^2, MN \slash p^2}
\tx{,}
\end{gather*}
where we use the notation in~\eqref{eq:def:classical-hecke-unnormalized} and we suppress the last summand if~$p^2 \nisdiv M_p$ as in the statement of Proposition~\ref{prop:ramanujan-type-implies-hecke-like}.
\end{proof}

\subsection{The support of Fourier expansions modulo~\texpdf{$\ell$}{l}}

The purpose of this section is to explain how to recover modular forms modulo~$\ell$ from a subset of their Fourier coefficients.

\begin{lemma}
\label{la:oldforms-modulo-ell}
Fix a number field $K \subset \CC$ and a prime ideal~$\ell \subset \OK$. Let~$f$ be a modular form of integral weight~$k$ for a Dirichlet character~$\chi$ modulo~$N$ with Fourier coefficients~$c(f;\,n) \in \OKell$. Fix a prime~$p$ that is co-prime to $\ell N$.

Assume that~$f$ satisfies the congruence
\begin{gather*}
  \forall n \in \ZZ \setminus p \ZZ \,:\, c(f;\,n) \equiv 0 \;\pmod{\ell}
\tx{.}
\end{gather*}
Then~$f$ is congruent to a constant modulo~$\ell$.

In particular, given a linearly independent set of cusp forms~$f_i \,\pmod{\ell}$ of fixed integral weight and for a fixed Dirichlet character modulo~$N$, there is a finite set of integers~$n$ co-prime to~$p$ such that the matrix with entries~$c(f_i;\,n)$ is invertible modulo~$\ell$.
\end{lemma}
\begin{proof}
The first statement is a special case of~\eqref{eq:thm:structure-of-ramanujan-type-congruences:congruence-remove-prime:squarefree} in Theorem~\ref{thm:structure-of-ramanujan-type-congruences}. Then the proof of the second part amounts to mere linear algebra.
\end{proof}

An alternative, purely geometric proof of Lemma~\ref{la:oldforms-modulo-ell} can be obtained through a generalization of Mazur's Lemma~5.9 in~\cite{mazur-1977}. A further alternative proof, which in fact establishes a significantly stronger statement, was suggested to the author by Serre. We reproduce his lemma and argument for convenience of the reader.
\begin{lemma}[Serre]
\label{la:oldforms-modulo-ell-serre}
Fix a number field $K \subset \CC$ and a prime ideal~$\ell \subset \OK$. Let~$f$ be a modular form of integral weight~$k$ for a Dirichlet character~$\chi$ modulo~$N$ with Fourier coefficients~$c(f;\,n) \in \OKell$. Assume that there is an integer~$n \ne 0$ with the property that~$c(f;\, n) \not\equiv 0 \,\pmod{\ell}$. Then given a density-zero set~$P$ of primes that are co-prime to~$\ell N$, there is~$n \in \ZZ$ with $p \nisdiv n$ for all~$p \in P$ and $c(f;\, n) \not\equiv 0 \,\pmod{\ell}$.
\end{lemma}

The proofs Lemma~\ref{la:oldforms-modulo-ell-serre} and Corollary~\ref{cor:ramanujan-type-has-density} require the next assertion that corresponds to an exercise in Serre's work. It arose from private communication with him.
\begin{remark}
\label{rm:endomorphism-set-SellN-frobenian}
For ease of notation, we set
\begin{gather*}
  V
\;:=\;
  \rmM_k(\Ga_0(N), \chi;\, \OKell) \otimes \Fell
\tx{.}
\end{gather*}
Consider an endomorphism~$\phi$ of this $\Fell$-vector space. We write~$P_\phi$ for the set of primes~$p$ that are co-prime to~$\ell N$ such that the Hecke operator~$\rmTcl_p$ acts on~$V$ as~$\phi$. Given any prime~$p$ that is co-prime to~$\ell N$, that action of the $p$\nbd th~Hecke operator is determined by~$p \,\pmod{\ell}$ and the~$p$\thdash\ Hecke eigenvalues modulo~$\ell^m$ for large enough~$m$ of the eigenforms that occur in~$\rmM_k(\Ga_0(N), \chi;\, \OKell)$. In analogy with, for instance Serre's classical argument~\cite{serre-1976}, consider the reduction modulo~$\ell^m$ of the Galois representation that is the direct sum of all~$\ell$\nbd adic Galois representations associated with Dirichlet characters modulo~$\ell N$ and with newforms that occur in the spaces~$\rmM_k(\Ga_0(N'), \chi;\, \OKell)$ for some~$N' \isdiv N$. Then the argument in~\S\,3.4.3 of~\cite{serre-2012} shows that~$P_\phi$ is $S_{\ell N}$-frobenian in the sense of~\S\,3.3.2 of~\cite{serre-2012}.
\end{remark}

\begin{proof}[Proof of Lemma~\ref{la:oldforms-modulo-ell-serre}]
Define the support modulo~$\ell$ of a modular form~$f$ with Fourier coefficients in~$\OKell$ as
\begin{gather*}
  \mathrm{supp}_\ell(f)
\;:=\;
  \big\{ n \in \QQ, n > 0 \,:\, c(f;\,n) \not\equiv 0 \,\pmod{\ell} \big\}
\tx{.}
\end{gather*}
Fix a set~$P$ of primes of density zero as in the statement, and let
\begin{gather*}
  F
\;:=\;
  \Big\{
  f \in \rmM_k(\Ga_0(N), \chi;\, \OKell) \,:\,
  \mathrm{supp}_\ell(f) \subseteq \bigcup_{p \in P} p \ZZ
  \Big\}
\tx{.}
\end{gather*}

Write~$Q$ for the set of primes~$q$ such that~$F |_{k,\chi} \rmTcl_q \subseteq F$. Observe that~$Q$ is the union of sets~$P_\phi$ as in Remark~\ref{rm:endomorphism-set-SellN-frobenian}, where~$\phi \,\pmod{\ell}$ runs through endomorphisms of the~$\Fell$-vector space $\rmM_k(\Ga_0(N),\chi;\, \OKell) \,\pmod{\ell}$ that preserve~$F$. In particular, each~$P_\phi$ and therefore~$Q$ is $S_{\ell N}$-frobenian in the sense of~\cite{serre-2012}.

Moreover, $Q$ contains every prime~$q \not\in P$ that is co-prime to~$\ell N$, and therefore has density one. By Proposition~3.8 of~\cite{serre-2012} this implies that~$Q$ equals the set of all primes not dividing~$\ell N$. In particular, we have have~$P \subset Q$.

Since~$F$ is a subset of~$\rmM_k(\Ga_0(N), \chi;\, \OKell)$, the set~$\mathrm{supp}_\ell (f)$ has a minimum for every $f \in F$ with $\mathrm{supp}_\ell (f) \ne \emptyset$. Assume by contraposition that there exists some~$f \in F$ with~$\mathrm{supp}_\ell(f) \ne \emptyset$. Then we have a well-defined minimum:
\begin{gather*}
  \min\, \mathrm{supp}_\ell (F)
\;:=\;
  \min \big\{
  \min\, \mathrm{supp}_\ell (f') \,:\,
  f' \in F,\, \mathrm{supp}_\ell (f') \ne \emptyset
  \big\}
\tx{.}
\end{gather*}
We can and will assume that~$f \in F$ is chosen in such a way that
\begin{gather*}
  n_f := \min\, \mathrm{supp}_\ell (f) = \min\, \mathrm{supp}_\ell (F)
\tx{.}
\end{gather*}
By the definition of~$F$ there is~$p \in P$ such that~$p \isdiv n_f$. We have~$n_f \slash p \in \mathrm{supp}_\ell(f |_{k,\chi}\, \rmTcl_p)$, contradicting the choice of~$f$. In particular, all elements of~$F$ are congruent to constants modulo~$\ell$.

Now let~$f$ be as in the statement. By the assumptions, $f$ is not congruent to a constant modulo~$\ell$, and therefore~$f \not\in F$. Then the existence of~$n$ as in the lemma follows from the definition of~$F$.
\end{proof}

\section{Ramanujan-type and Hecke congruences}
\label{sec:ramanujan-type-hecke}

Ramanujan-type congruences with gap~\eqref{eq:thm:structure-of-ramanujan-type-congruences:congruence-both-square-classes} in Theorem~\ref{thm:structure-of-ramanujan-type-congruences} are one of the key discoveries in this paper. They provide a link between Ramanujan-type congruences and congruences for Hecke eigenvalues. In this section, we will decipher this relation.

For completeness, we refer to Example~\ref{ex:hecke-operators:oldforms-twists} for the construction of Ramanujan-type congruences at the level.

\subsection{Ramanujan-type congruences constructed through the~\texpdf{$\Theta$}{Theta}-operator}
\label{sec:ramanujan-type-hecke:theta-operator}

Prior to embarking on the study of congruences for Hecke eigenvalues if~$\ell \nisdiv M$, we point out Ra\-ma\-nu\-jan-type congruences that can be constructed through the~$\Theta$\nbd operator and the Hecke operator~$\rmU_\ell$ modulo~$\ell$. Correspondingly, our first proposition is centered around the case that~$\ell$ divides~$M$. We emphasize in our statement that the Ra\-ma\-nu\-jan-type congruences that we obtain are maximal.
\begin{proposition}
\label{prop:integral-weight-ell-congruences}
Let~$\ell$ be an odd prime, $m$ positive integers, and~$\beta$ an integer. Then there is a modular form~$f$ of even weight and level~$1$ such that
\begin{alignat*}{2}
&
  \forall n \in \ZZ \,:\,
  c(f;\,\ell^m n + \ell^{m-1} \beta &)
&\equiv
  0 \;\pmod{\ell}
\quad\tx{and}
\\
&
  \exists n \in \ZZ \,:\,
  c(f;\,\ell^{m-1} n &)
&\not\equiv
  0 \;\pmod{\ell}
\tx{.}
\end{alignat*}
\end{proposition}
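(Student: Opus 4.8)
The plan is to construct $f$ explicitly using the $\Theta$-operator and a $\rmU_\ell$-type truncation, following the construction of Dewar~\cite{dewar-2012} alluded to after Theorem~\ref{mainthm:structure-of-ramanujan-type-congruences}. Recall that the classical $\Theta$-operator $\Theta = q\frac{d}{dq}$ acts on Fourier expansions by $c(\Theta g;\,n) = n\,c(g;\,n)$, and that for a modular form $g$ of weight $k$ and level~$1$ with $\ell$-integral coefficients, $\Theta g$ is congruent modulo~$\ell$ to a modular form of weight $k + \ell + 1$ and level~$1$; more generally iterated applications $\Theta^j g$ are congruent to modular forms of level~$1$ of suitable weight. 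The point of the operator $\Theta^{\ell-1}$ is that it acts on Fourier coefficients as the indicator $c(g;\,n) \mapsto n^{\ell-1}c(g;\,n)$, which modulo~$\ell$ picks out precisely the coefficients with $\ell \nisdiv n$ and annihilates those with $\ell \isdiv n$.

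The key steps, in order, are as follows. First, I would start from any modular form $g$ of level~$1$ and even weight whose $\ell^{m-1}$-th coefficient is a unit modulo~$\ell$ — for instance a suitable power $E_4^a E_6^b$ or $\Delta$, adjusted so that $c(g;\,\ell^{m-1})\not\equiv 0\pmod\ell$; such a $g$ exists because for each residue $r$ modulo~$\ell$ the space of level-$1$ forms surjects onto enough Fourier coefficients (one can also simply take $g$ with $c(g;\,1)=1$ and pull back by the substitution $q\mapsto q^{\ell^{m-1}}$, i.e.\ consider $g(\ell^{m-1}\tau)$, which is modular of level~$\ell^{m-1}$ — but to stay in level~$1$ it is cleaner to invoke surjectivity of the $q$-expansion on an initial segment). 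Second, I would form $h := \Theta^{\ell-1}\big(g - g_{(\ell^{m-1}\beta \bmod \ell^m)}\big)$ or, more transparently, build the target directly: set
\begin{gather*}
  f \;:\equiv\; \Theta^{(\ell-1)m}\, g \;\;-\;\; (\text{correction picking out } n\equiv \ell^{m-1}\beta)
  \pmod{\ell},
\end{gather*}
and then argue that the right-hand side is congruent modulo~$\ell$ to a genuine holomorphic modular form of even weight and level~$1$ by the standard $\Theta$-operator congruence (Serre, Swinnerton-Dyer, Katz), iterated $(\ell-1)m$ times. Third, I would compute Fourier coefficients: after applying $\Theta^{(\ell-1)m}$, the coefficient of $q^n$ is multiplied by $n^{(\ell-1)m}$, which modulo~$\ell$ equals $1$ if $\ell\nisdiv n$ and $0$ if $\ell\isdiv n$ — iterating this gives vanishing exactly on $\ell\ZZ$ after the first application, and one more layer of bookkeeping with the residue $\ell^{m-1}\beta \bmod \ell^m$ (handled by subtracting a shifted copy, again congruent to a level-$1$ form by the same $\Theta$ argument, since the $\rmU_\ell$ and $\rmV_\ell$ operators preserve level~$1$ up to congruence) yields the congruence $c(f;\,\ell^m n + \ell^{m-1}\beta)\equiv 0\pmod\ell$. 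Finally, I would check non-triviality: the coefficient $c(f;\,\ell^{m-1})$ is, modulo~$\ell$, equal to $(\ell^{m-1})^{(\ell-1)m}\cdot(\text{something})$ — one must be careful here because $\ell^{m-1}\equiv 0\pmod\ell$ would kill it, so instead the cleaner route is to arrange the construction so that $f$ has a coefficient supported at an index exactly divisible by $\ell^{m-1}$ but not $\ell^m$; concretely, pick $g$ with $c(g;\,\ell^{m-1})\not\equiv 0$ and note that the operator $\rmU_{\ell^{m-1}}$ (i.e.\ $c(\cdot;\,n)\mapsto c(\cdot;\,\ell^{m-1}n)$, which up to congruence preserves level~$1$ by Serre's theory) moves this to the coefficient of $q^1$, which then survives all subsequent $\Theta$-applications since $1^{(\ell-1)m}=1$.

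The main obstacle I anticipate is \emph{simultaneously} achieving the congruence on the progression $\ell^m\ZZ + \ell^{m-1}\beta$ and the non-vanishing on $\ell^{m-1}\ZZ$ while keeping everything in level~$1$ of even weight. The $\Theta$-operator repeatedly raises the weight by $\ell+1$, so the even-weight requirement is automatic once one starts from even weight; the level-$1$ requirement is the delicate part, because the naive way to impose a condition modulo~$\ell^m$ (rather than modulo~$\ell$) on Fourier coefficients is to use $\rmV_\ell,\rmU_\ell$, which a priori leave level~$1$. The resolution is that modulo~$\ell$ the operators $\rmV_\ell$ and $\rmU_\ell$ \emph{do} stabilize the space of level-$1$ forms (this is classical: $g\big|\rmV_\ell \equiv g\big|\Theta^{\ell-1}\cdot(\text{unit}) + \dots$, more precisely $\rmU_\ell\rmV_\ell = \id$ and $g - \rmV_\ell\rmU_\ell g = \Theta^{\ell-1}(\text{level-1 form})$ modulo~$\ell$), so all truncations and shifts used above stay within level~$1$ modulo~$\ell$. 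Assembling these reductions carefully, and then invoking Corollary~\ref{cor:congruence-on-arithmetic-progression-hecke-T-eigenvector} is not needed here — this proposition is a pure existence/construction statement — so the only real work is the $\Theta$-operator bookkeeping, which is routine once the operators' compatibility with level~$1$ modulo~$\ell$ is quoted.
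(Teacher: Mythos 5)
Your toolkit ($\Theta$\nbd operators, $\rmU_\ell$, $\rmV_\ell$, the Serre--Swinnerton-Dyer weight-raising congruences) is the right one --- the paper's proof is likewise a $\Theta$\nbd operator construction in the spirit of Dewar --- but as assembled the argument has three concrete gaps. First, the centerpiece $f \equiv \Theta^{(\ell-1)m}g - (\text{correction})$ cannot work for $m \ge 2$: a single application of $\Theta^{\ell-1}$ already annihilates every Fourier coefficient at an index divisible by $\ell$, so any form in the image of $\Theta^{\ell-1}$ has $c(\,\cdot\,;\ell^{m-1}n) \equiv 0$ for all $n$ once $m \ge 2$, and the required non-vanishing on $\ell^{m-1}\ZZ$ would have to be carried entirely by the unspecified ``correction''. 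Second, you give no mechanism that kills the single class $n \equiv \ell^{m-1}\beta \pmod{\ell^m}$ when $\ell \nmid \beta$; the operators you actually deploy ($\Theta^{\ell-1}$, $\rmU_\ell$, $\rmV_\ell$) only distinguish $\ell \mid n$ from $\ell \nmid n$. The paper does this with the twist $g - \big(\tfrac{\beta}{\ell}\big)\Theta^{(\ell-1)\slash 2}g$, whose $n$\nbd th coefficient is $c(g;n)\big(1 - \big(\tfrac{\beta}{\ell}\big)\big(\tfrac{n}{\ell}\big)\big)$ and hence vanishes on the square class of $\beta$ while the coefficients on $\ell\ZZ$ are untouched (alternatively a linear combination of $\Theta^j g$, $0 \le j \le \ell-1$, realizes any indicator of a class modulo $\ell$ by Lagrange interpolation). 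Third, the passage from $m=1$ to general $m$ is the genuinely nontrivial step and is missing: one must first build $g_1$ solving the case $m=1$ and then produce $f$ with $\rmU_\ell^{m-1} f \equiv g_1 \pmod{\ell}$, which requires the surjectivity of $\rmU_\ell$ modulo $\ell$ on level-one forms --- exactly the theorem of Dewar the paper cites. Observing where $\rmU_{\ell^{m-1}}$ sends a chosen coefficient of $g$ does not construct $f$.

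Ironically, the one idea you explicitly set aside would close this last gap without Dewar's theorem: $f := g_1^{\ell^{m-1}} \equiv g_1\big|\rmV_{\ell^{m-1}} \pmod{\ell}$ is a genuine level-one form of even weight with $c(f;\ell^m n + \ell^{m-1}\beta) \equiv c(g_1;\ell n + \beta) \equiv 0$ and $c(f;\ell^{m-1}n) \equiv c(g_1;n) \pmod{\ell}$. So $\rmV_{\ell^{m-1}}$ does preserve level one modulo $\ell$, but via the congruence $h(q)^\ell \equiv h(q^\ell) \pmod{\ell}$, not via the identity $g - \rmV_\ell\rmU_\ell\,g \equiv \Theta^{\ell-1}g$ that you quote, which only controls the composite $\rmV_\ell\rmU_\ell$. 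If you repair the second point as above and replace the $\Theta^{(\ell-1)m}$ device by either Dewar's surjectivity or this power trick, the proof goes through.
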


\begin{remark}
In order to extend Proposition~\ref{prop:integral-weight-ell-congruences} to arbitrary levels, one can use the following construction. Let~$f$ be a modular form satisfying the Ramanujan-type congruence
\begin{alignat*}{2}
&
  \forall n \in \ZZ \,:\,
  c(f;\,& M n + \beta)
&\equiv
  0 \;\pmod{\ell}
\quad\tx{and}
\\
  \forall q \tx{ prime}, q \isdiv M\;
&
  \exists n \in \ZZ \,:\,
  c(f;\,& (M \slash q) n + \beta )
&\not\equiv
  0 \;\pmod{\ell}
\tx{.}
\end{alignat*}
Let~$p \nisdiv M$ be prime and~$\chi$ the trivial Dirichlet character modulo~$p$. For a nonnegative integer~$m$, we set~$f_{\chi\,p^m}(\tau) := f_\chi(p^m \tau)$, where~$f_\chi \not\equiv 0 \,\pmod{\ell}$ is as in Example~\ref{ex:hecke-operators:oldforms-twists}. Assume that $m > 0$, let~$f_0 := f_{\chi\,p^{m-1}}$ and~$f_1 := f_{\chi\,p^{m-1}} - f_{p^{m-1}}$, and choose integers~$\beta_0$ and~$\beta_1$ with
\begin{gather*}
  \beta_0,\beta_1 \equiv p^{m-1} \beta \,\pmod{M}
\tx{,}\quad
  \beta_0 \equiv 0 \,\pmod{p^m}
\tx{,}\quad
  \beta_1 \equiv 0 \,\pmod{p^{m-1}}
\tx{,}\;
  \beta_1 \not\equiv 0 \,\pmod{p^m}
\tx{.}
\end{gather*}
Then for the following Ramanujan-type congruences hold:
\begin{alignat*}{2}
&
  \forall n \in \ZZ \,:\,
  c( f_0 ;\,& M p^m n + \beta_0 )
&\equiv
  0 \;\pmod{\ell}
\quad\tx{and}
\\
  \forall q \tx{ prime}, q \isdiv M p^m \;
&
  \exists n \in \ZZ \,:\,
  c(f_0;\,& (M p^m \slash q) n + \beta_0 )
&\not\equiv
  0 \;\pmod{\ell}
\tx{;}
\\
&
  \forall n \in \ZZ \,:\,
  c( f_1 ;\,& M p^m n + \beta_1 )
&\equiv
  0 \;\pmod{\ell}
\quad\tx{and}
\\
  \forall q \tx{ prime}, q \isdiv M p^m \;
&
  \exists n \in \ZZ \,:\,
  c(f_1;\,& (M p^m \slash q) n + \beta_1 )
&\not\equiv
  0 \;\pmod{\ell}
\tx{.}
\end{alignat*}
\end{remark}

\begin{proof}[{Proof of Proposition~\ref{prop:integral-weight-ell-congruences}}]
In this proof all modular forms have even weight and rational and~$\ell$\nbd integral Fourier coefficients. For~$\beta' \in \ZZ$, we set
\begin{gather}
  \rmU_{\ell,\beta'}\, g
\;:=\;
  \sum_{n \equiv \beta' \,\pmod{\ell}} c(g;n) e(n \tau)
\end{gather}
and~$\rmU_\ell := \rmU_{\ell,0}$. Recall the operator~$\Theta := (2 \pi i)^{-1}\, \partial_\tau$ and its action on Fourier coefficients.

The assertion in the proposition is equivalent to finding a modular form~$f$ level~$1$ such that~$\rmU_{\ell,\beta}\, \rmU_\ell^{m-1}\, f \equiv 0 \pmod{\ell}$ and~$\rmU_\ell^{m-1}\, f \not\equiv 0 \pmod{\ell}$.

Consider the case~$\ell \isdiv \beta$. Observe that there is a modular form~$g$ such that~$\rmU_{\ell,\beta'}\, g \not\equiv 0 \,\pmod{\ell}$ for some~$\ell \nisdiv \beta'$, as can be seen when iterating the~$\rmU_\ell$ operator. Then we set~$g_1 \,:=\, \Theta\, g \not\equiv 0 \,\pmod{\ell}$ and find that~$\rmU_\ell\, \Theta\, g \equiv 0 \,\pmod{\ell}$. If~$m = 1$, we set~$f = g_1$ and finish the proof.

Consider the case~$\ell \nisdiv \beta$. There is a modular form~$g$ such that~$\rmU_\ell\, g \not\equiv 0 \,\pmod{\ell}$, since the~$\rmU_\ell$\nbd operator modulo~$\ell$ is surjective~\cite{dewar-2012}. We set
\begin{gather*}
  g_1
\;:=\;
  g - \left(\mfrac{\beta}{\ell}\right) \Theta^{\frac{\ell-1}{2}}\, g
\not\equiv
  0
  \;\pmod{\ell}
\tx{.}
\end{gather*}
Inspecting Fourier coefficients, we discover that~$\rmU_{\ell,\beta}\, g_1 \equiv 0 \,\pmod{\ell}$. If~$m = 1$, we set~$f = g_1$ and complete the proof.

For~$m > 1$, we employ the surjectivity of the~$\rmU_\ell$ operator. Let~$g_1$ be as before and let~$g_m$ be a level~$1$ modular form such that~$\rmU_\ell^{m-1}\,g_m \equiv g_1 \,\pmod{\ell}$. Then we finish the proof with~$f = g_m$.
\end{proof}

\begin{example}
In specific cases, we can make Proposition~\ref{prop:integral-weight-ell-congruences} more explicit, but the resulting modular forms quickly have gargantuan weight. For instance, choose~$g = \Delta$, $\ell = 17$, and~$\beta$ co-prime to~$\ell$ and not a square modulo~$\ell$. Then the~$g_2$ in the proof of Proposition~\ref{prop:integral-weight-ell-congruences} has filtration weight~$3180$ and its Fourier expansion is of the form
\begin{multline*}
  7 e(\tau) +  9 e(2 \tau) + e(17 \tau) + 10 e(34 \tau)
  +
  7 e(68 \tau) + 7 e(136 \tau) + 2 e(153 \tau)
\\
  +
  7 e(221 \tau) + 11 e(255 \tau)
  +
  \cO(e(266 \tau))
\tx{.}
\end{multline*}
\end{example}

\subsection{Ramanujan-type congruences implied by Hecke congruences}
\label{sec:ramanujan-type-hecke:implied-by-hecke}

We start our investigation of how congruences for Hecke eigenvalues imply Ramanujan-type congruences with a special case of a theorem by Treneer~\cite{treneer-2006}. The second part of the next proposition was not stated by Treneer, but readily follows from her proof.
\begin{proposition}
[{cf.\@ Theorem~1.2~(ii) by Treneer~\cite{treneer-2006}}]
\label{prop:integral-weight-p-congruences-treneer}
Let~$\ell$ be an odd prime. Fix a modular form~$f$ of integral weight for a Dirichlet character modulo~$N$. Assume that~$f$ has~$\ell$\nbd integral Fourier coefficients and that~$\ell \nisdiv N$.

Given a non-negative integer~$m$, let~$P_m$ be the set of prime~$p \equiv -1 \,\pmod{\ell N}$ with the property that we have the Ramanujan-type congruence with gaps
\begin{gather*}
  \forall n \in \ZZ \setminus (p \ZZ \cup \ell \ZZ) \,:\,
  c(f;\,\ell^m p n)
\equiv
  0 \;\pmod{\ell}
\tx{.}
\end{gather*}
Then there is an~$m$ such that~$P_m$ has positive upper density.

If~$f$ is a cusp form, the set of primes~$p \equiv -1 \,\pmod{\ell N}$ with the property that
\begin{gather*}
  \forall n \in \ZZ \setminus p \ZZ \,:\,
  c(f;\,p n)
\equiv
  0 \;\pmod{\ell}
\end{gather*}
has positive upper density.
\end{proposition}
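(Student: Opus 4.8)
The plan is to run the Chebotarev-density argument of Serre underlying Treneer's proof~\cite{treneer-2006}, organised so that the gap structure of the congruence comes for free from reading off Fourier coefficients on $p\ZZ$ through the classical Hecke operator; when $f$ is a cusp form the first reduction below is vacuous and one takes $m=0$. We may assume the coefficients $c(f;\,n)$ are rational and $\ell$-integral. Let $U_\ell$ denote the operator $\sum a_n q^n \mapsto \sum a_{\ell n} q^n$ and put $g := f | U_\ell^m$, so that $c(g;\,r) = c(f;\,\ell^m r)$ for all $r$; choose $m \ge 0$ large enough that $g$ is a holomorphic modular form of weight $k$ and nebentypus $\chi$ on some $\Ga_0(\ell^a N)$, taking $m=0$, $g=f$ when $f$ is cuspidal. (The ``$m$'' in the statement only matters for Treneer's more general, weakly holomorphic theorem, where iterating $U_\ell$ is needed to reach a holomorphic form and a $\Theta$-operator twist entering that step accounts for the exclusion $n \notin \ell\ZZ$.) Now for any prime $p \nisdiv \ell N$ and any $n$ with $p \nisdiv n$ one has $c(g;\,n/p) = 0$, hence $c(g;\,pn) = c\big(g | \rmTcl_p;\,n\big)$, where $\rmTcl_p$ is the $p$-th classical Hecke operator. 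It therefore suffices to exhibit a positive-density set of primes $p \equiv -1 \pmod{\ell N}$ with $g | \rmTcl_p \equiv 0 \pmod{\ell}$: for such $p$ we get $c(f;\,\ell^m pn) = c(g;\,pn) \equiv 0 \pmod{\ell}$ for every $n$ coprime to $p$, which contains the asserted Ramanujan-type congruence with gaps, and for a cusp form $f$ with $m=0$ is precisely the second assertion.

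To produce that set I would reduce $g$ modulo $\ell$ and decompose $\ov g = \sum_i \ov g_i$ according to the finitely many systems of eigenvalues of the commuting operators $\rmTcl_p$, $p \nisdiv \ell N$, acting on the finite-dimensional space of mod-$\ell$ modular forms of weight $k$ and nebentypus $\chi$ on $\Ga_0(\ell^a N)$. To each occurring eigensystem $i$ I attach a semisimple, odd, two-dimensional representation $\ov\rho_i :\, G_\QQ \to \GL{2}(\ov{\bbF}_\ell)$, unramified outside $\ell N$, with $\tr\,\ov\rho_i(\mathrm{Frob}_p)$ equal to the $p$-th eigenvalue of $\ov g_i$ for all $p \nisdiv \ell N$: the reductions of the representations of Deligne (and Deligne--Serre in weight one) for the cuspidal eigensystems, and the explicit reducible representation $\psi_1 \oplus \psi_2\, \ov{\omega}^{\,k-1}$ with $\psi_1\psi_2 = \chi$ and $\ov{\omega}$ the mod-$\ell$ cyclotomic character for the Eisenstein ones. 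Put $\ov\rho := \big(\bigoplus_i \ov\rho_i\big) \oplus \eta$, where $\eta :\, G_\QQ \to (\ZZ/\ell N\ZZ)^\times$ records $p \bmod \ell N$; it has finite image, cutting out a finite Galois extension $L/\QQ$. Complex conjugation $c \in \mathrm{Gal}(L/\QQ)$ satisfies $\eta(c) = -1$, and since $\ell$ is odd and each $\ov\rho_i$ is odd, $\ov\rho_i(c)$ has eigenvalues $\{1,-1\}$, so $\tr\,\ov\rho_i(c) = 0$. Hence the set of $\sigma \in \mathrm{Gal}(L/\QQ)$ with $\eta(\sigma) = -1$ and $\tr\,\ov\rho_i(\sigma) = 0$ for all $i$ is nonempty and conjugation-stable, and the Chebotarev density theorem yields a positive-density set of primes $p$ with $\mathrm{Frob}_p$ in it, that is, with $p \equiv -1 \pmod{\ell N}$ and every $\rmTcl_p$-eigenvalue occurring in $\ov g$ congruent to $0$ modulo $\ell$.

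The remaining step --- upgrading ``all $\rmTcl_p$-eigenvalues of $\ov g$ vanish'' to ``$\ov g | \rmTcl_p = 0$'' --- is the one I expect to be the main obstacle: distinct Hecke eigenforms can be congruent modulo $\ell$ and $\ov g$ need only be a generalized eigenvector, so one must control the non-semisimplicity of the mod-$\ell$ Hecke module, using that $\rmTcl_p$ modulo $\ell$ is governed by $\mathrm{Frob}_p$ in a finite Galois group (Eichler--Shimura) and that for the primes selected above this operator acts as a genuine zero on the ambient space, not merely nilpotently. This is carried out in Treneer~\cite{treneer-2006}, and together with her lemma that the $U_\ell$-iterates of a weakly holomorphic form eventually become holomorphic it exhausts the real content; everything else is the standard Deligne--Serre--Shimura-and-Chebotarev package. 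The coefficient-extraction step in the first paragraph could alternatively be phrased through the vectors $\rmT_M(v,\beta)$ of Section~\ref{sec:characterization-ramanujan-type-congruences} and Corollary~\ref{cor:congruence-on-arithmetic-progression-hecke-T-eigenvector}, but Treneer's route is the most direct.
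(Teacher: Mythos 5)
Your overall route is the right one --- it is Treneer's argument, which is all the paper itself offers for this proposition (it is stated with a ``cf.'' and no written proof beyond the remark that the second assertion follows from her argument). The cusp form case you handle correctly: $c(f;\,pn) = c(f|_{k,\chi}\rmTcl_p;\,n)$ for $p \nisdiv n$, combined with Serre's density statement that $\rmTcl_p$ annihilates the entire space of mod-$\ell$ cusp forms for a positive density of primes $p \equiv -1 \,\pmod{\ell N}$.

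The genuine gap is in your first reduction, and it is exactly what the first assertion's $\ell^m$ and its exclusion of $\ell\ZZ$ are there for. The form $f$ in the statement is already holomorphic, so ``choose $m$ large enough that $f|\rmU_\ell^m$ is holomorphic'' is vacuous: your recipe takes $m=0$ and $g=f$ for every holomorphic $f$, and you would then be asserting the congruence for all $n \notin p\ZZ$, with no exclusion of $\ell\ZZ$, for an arbitrary, possibly non-cuspidal, modular form. But the input you need at the end --- that $\rmTcl_p$ is the \emph{zero} operator, not merely a nilpotent one, on the relevant mod-$\ell$ Hecke module for a positive density of $p \equiv -1 \,\pmod{\ell N}$ --- is Serre's result for cusp forms. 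Your Chebotarev computation on the full space, Eisenstein systems included, only controls eigenvalues, and as you yourself note that is insufficient: mod $\ell$ there are nontrivial congruences between Eisenstein series and cusp forms, the Hecke module need not be semisimple, and neither Serre nor Treneer supplies the operator-level vanishing on the full space. The step that repairs this is Treneer's Theorem~3.1: for $m$ sufficiently large, the form $\sum_{\ell \nisdiv n} c(f;\,\ell^m n)\, e(n\tau)$ is congruent modulo $\ell$ to a genuine cusp form, and it is to that cusp form that Serre's density theorem is applied --- which is simultaneously where the restriction $n \notin \ell\ZZ$ in the statement comes from. Your parenthetical dismissing the $m$ as relevant only to the weakly holomorphic setting therefore mis-locates the one step doing real work in the first assertion; replacing $g = f|\rmU_\ell^m$ by its restriction to exponents coprime to $\ell$ and invoking that theorem closes the argument.
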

\begin{remark}
By~\eqref{eq:thm:structure-of-ramanujan-type-congruences:congruence-remove-prime:Up} in Theorem~\ref{thm:structure-of-ramanujan-type-congruences}, we see that the congruences in Proposition~\ref{prop:integral-weight-p-congruences-treneer} cannot be improved if~$\rmU_\ell^m\, f \not\equiv 0 \,\pmod{\ell}$ in the first case or~$f \not\equiv 0 \,\pmod{\ell}$ in the second one.
\end{remark}

The key idea in the proof of this proposition goes back to work of Ono and Ahlgren-Ono~\cite{ono-2000,ahlgren-ono-2001}, who used a density statement for Hecke eigenvalues by Serre~\cite{serre-1976} in conjunction with the recursion for Fourier coefficients implied by a congruence for Hecke eigenvalues. Serre's statement naturally comes in two flavors, and Treneer showcased the consequences of one of them. The next proposition captures all Ra\-ma\-nu\-jan-type congruences that are implied by congruences for Hecke eigenvalues, and allows us to make use of the other flavor of Serre's theorem. It features the classical Hecke operator, which we in this paper denote by~$\rmTcl_p$.
\begin{proposition}
\label{prop:hecke-implies-ramanujan-type}
Fix a number field $K \subset \CC$ and a prime ideal~$\ell \subset \OK$ with~$\ell \nisdiv 2$. Let~$f$ be a modular form of integral weight~$k$ for a Dirichlet character~$\chi$ modulo~$N$ with Fourier coefficients~$c(f;\,n) \in \OKell$. Consider a prime~$p$ that is co-prime to~$\ell N$. Assume that we have the congruence
\begin{gather*}
  f \big|_{k,\chi} \rmTcl_p
\;\equiv\;
  \lambda_p f
  \;\pmod{\ell}
\tx{,}\quad
  \lambda_p \in \OKell
\tx{.}
\end{gather*}

If\/~$\lambda_p^2 \equiv 4 \chi(p) p^{k-1} \,\pmod{\ell}$, we have the Ramanujan-type congruence with gaps
\begin{gather}
\label{eq:prop:hecke-congruence-gives-ramanujan-type-congruence:congruence}
  \forall n \in \ZZ \setminus p \ZZ \,:\,
  c(f;\,p^m n)
\equiv
  0 \;\pmod{\ell}
\end{gather}
for a positive integer~$m$ if and only if~$m \equiv -1 \,\pmod{\ell}$. If~$\lambda_p^2 \not\equiv 4 \chi(p) p^{k-1} \,\pmod{\ell}$ and the~$\rmL$\nbd polynomial
\begin{gather*}
  1 - \lambda_p X + \chi(p) p^{k-1} X^2
\;\equiv\;
  (1 - \alpha_p X)(1 - \beta_p X )
  \;\pmod{\ell}
\end{gather*}
factors over~$\Fell$, then~\eqref{eq:prop:hecke-congruence-gives-ramanujan-type-congruence:congruence} holds if and only if\/~$\alpha_p^{m+1} \equiv \beta_p^{m+1} \,\pmod{\ell}$.
\end{proposition}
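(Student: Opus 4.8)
The plan is to convert the Hecke congruence into a two-term linear recursion for the Fourier coefficients of $f$ along each geometric progression $n_0, p n_0, p^2 n_0, \dots$ with $n_0$ coprime to $p$, solve that recursion over $\Fell$, and read off the divisibility of $c(f;\,p^m n_0)$. With the usual normalization of the classical Hecke operator one has $c\big(f \big|_{k,\chi} \rmTcl_p;\, n\big) = c(f;\,pn) + \chi(p) p^{k-1} c(f;\,n/p)$, where $c(f;\,n/p) = 0$ unless $p \isdiv n$. Fix $n_0 \ge 1$ with $p \nisdiv n_0$ and put $a_j := c(f;\,p^j n_0)$. The Hecke congruence then yields $a_1 \equiv \lambda_p\, a_0 \pmod{\ell}$ (the cross term drops out since $p \nisdiv n_0$) and $a_{j+1} \equiv \lambda_p\, a_j - \chi(p) p^{k-1} a_{j-1} \pmod{\ell}$ for every $j \ge 1$. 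Since $f$ is holomorphic only the positive indices $p^m n_0 \ge 1$ matter in~\eqref{eq:prop:hecke-congruence-gives-ramanujan-type-congruence:congruence}, and $n = 0 \in p\ZZ$ is excluded; moreover, if $c(f;\,n) \equiv 0 \pmod{\ell}$ for every $n \ge 1$ coprime to $p$, then the recursion forces every positive Fourier coefficient of $f$ to vanish modulo $\ell$, so~\eqref{eq:prop:hecke-congruence-gives-ramanujan-type-congruence:congruence} holds for all $m$. I therefore reduce to the remaining case and fix $n_0$ so that $c(f;\,n_0)$ is a unit in $\Fell$.

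Next I would solve the recursion over $\Fell$. Its characteristic polynomial $X^2 - \lambda_p X + \chi(p) p^{k-1}$ is the reciprocal of the $\rmL$-polynomial, and $\chi(p) p^{k-1}$ is a unit modulo $\ell$ because $p \nisdiv \ell$. In the case $\lambda_p^2 \not\equiv 4\chi(p) p^{k-1} \pmod{\ell}$ with the $\rmL$-polynomial factoring as $(1-\alpha_p X)(1-\beta_p X)$ over $\Fell$, one has $\alpha_p \ne \beta_p$, $\alpha_p + \beta_p = \lambda_p$ and $\alpha_p \beta_p = \chi(p) p^{k-1}$; the general solution is $a_j = A \alpha_p^j + B\beta_p^j$, and the initial data $a_0 = c(f;\,n_0)$ together with $a_1 \equiv \lambda_p a_0$ pin down $A, B$ and give $a_m \equiv \frac{c(f;\,n_0)}{\alpha_p - \beta_p}\big(\alpha_p^{m+1} - \beta_p^{m+1}\big) \pmod{\ell}$. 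In the case $\lambda_p^2 \equiv 4\chi(p) p^{k-1} \pmod{\ell}$ I use $\ell \nisdiv 2$ to write the double root as $\rho := \lambda_p/2$, with $\rho^2 \equiv \chi(p) p^{k-1}$ a unit; the general solution is $a_j = (A + Bj)\rho^j$, and the same initial data give $a_m \equiv (m+1)\,\rho^m\, c(f;\,n_0) \pmod{\ell}$.

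Finally I would read off the conclusion: since $\alpha_p - \beta_p$, $\rho$ and $c(f;\,n_0)$ are all units in $\Fell$, the congruence $c(f;\,p^m n_0) \equiv 0 \pmod{\ell}$ is equivalent to $\alpha_p^{m+1} \equiv \beta_p^{m+1} \pmod{\ell}$ in the first case, and to $m+1 \equiv 0 \pmod{\ell}$, i.e.\ $m \equiv -1 \pmod{\ell}$, in the second. As this criterion is independent of $n_0$, it governs every $n \in \ZZ \setminus p\ZZ$ simultaneously, giving the asserted equivalences for~\eqref{eq:prop:hecke-congruence-gives-ramanujan-type-congruence:congruence}. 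I expect the only real care to be needed in the bookkeeping of the degenerate situation in which $f$ is congruent to a constant modulo $\ell$ (so that~\eqref{eq:prop:hecke-congruence-gives-ramanujan-type-congruence:congruence} is vacuously true and is the case implicitly excluded from the equivalence), and in keeping track of the roles of $p \nisdiv \ell$ and $\ell \nisdiv 2$ that make the relevant quantities units; the recursion and its closed-form solution are otherwise routine.
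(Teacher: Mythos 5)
Your proposal is correct and follows essentially the same route as the paper: the paper encodes the Hecke congruence as the formal Dirichlet-series identity $\sum c(f;n)n^{-s} \equiv \bigl(1-\lambda_p p^{-s}+\chi(p)p^{k-1}p^{-2s}\bigr)^{-1}\sum_{p\nmid n} c(f;n)n^{-s}$ and reads off $c(f;p^m n)\equiv\bigl(\sum_{i=0}^m\alpha_p^i\beta_p^{m-i}\bigr)c(f;n)$, which is exactly the closed-form solution of your two-term recursion, and then evaluates that sum as $(m+1)\alpha_p^m$ or $(\alpha_p^{m+1}-\beta_p^{m+1})/(\alpha_p-\beta_p)$ just as you do. Your explicit handling of the degenerate case where all $c(f;n)$ with $p\nmid n$ vanish modulo $\ell$ is a point the paper's proof passes over silently, and is a welcome addition.
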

\begin{proof}
We capture the Fourier expansion modulo~$\ell$ of~$f$ as a formal Dirichlet series
\begin{gather*}
  \sum_{n = 1}^\infty
  c(f;\,n) n^{-s}
\;\equiv\;
  \frac{1}{1 - \lambda_p p^{-s} + \chi(p) p^{k-1} p^{-2s}}
  \sum_{\substack{n = 1\\p\nisdiv n}}^\infty
  c(f;\,n) n^{-s}
\tx{.}
\end{gather*}
After extending~$K$ if needed, we can factor the $\rmL$\nbd polynomial as in the statement. We then recognize that the congruence
\begin{gather*}
  \sum_{i = 0}^m \alpha_p^i \beta_p^{m-i}
\;\equiv\;
  0
  \;\pmod{\ell}
\end{gather*}
is equivalent to the Ramanujan-type congruence with gaps in~\eqref{eq:prop:hecke-congruence-gives-ramanujan-type-congruence:congruence}.

If $\lambda_p^2 \equiv 4 \chi(p) p^{k-1} \,\pmod{\ell}$, we have~$\alpha_p \equiv \beta_p \,\pmod{\ell}$, and~\eqref{eq:prop:hecke-congruence-gives-ramanujan-type-congruence:congruence} occurs for~$\ell \isdiv (m+1)$, i.e., $m \equiv -1 \,\pmod{\ell}$ as claimed. 

Consider the case~$\lambda_p^2 \not\equiv 4 \chi(p) p^{k-1} \,\pmod{\ell}$, which implies~$\alpha_p \not\equiv \beta_p \,\pmod{\ell}$. Since we have~$\ell \nisdiv p$, $p \nisdiv N$, and~$\alpha_p \beta_p = \chi(p) p^{k-1}$, both~$\alpha_p$ and~$\beta_p$ are units modulo~$\ell$. We can therefore rephrase the congruence condition as
\begin{gather*}
  \sum_{i = 0}^m \alpha_p^i \beta_p^{m-i}
\;=\;
  \frac{\alpha_p^{m+1} - \beta_p^{m+1}}
       {\alpha_p - \beta_p}
\;\equiv\;
  0
  \;\pmod{\ell}
\tx{.}
\end{gather*}
\end{proof}

Combining Proposition~\ref{prop:hecke-implies-ramanujan-type} with Serre's results, we obtain the following congruences.
\begin{corollary}
\label{cor:hecke-implies-ramanujan-type:serre}
Let~$\ell$ be an odd prime. Fix a modular form~$f$ of integral weight for a Dirichlet character modulo~$N$. Assume that~$f$ has~$\ell$\nbd integral Fourier coefficients and that~$\ell \nisdiv N$.

Given a non-negative integer~$m$, let~$P_m$ be the set of prime~$p \equiv 1 \,\pmod{\ell N}$ with the property that we have the Ramanujan-type congruence with gaps
\begin{gather*}
  \forall n \in \ZZ \setminus (p \ZZ \cup \ell \ZZ) \,:\,
  c(f;\,\ell^m p^{\ell-1} n)
\equiv
  0 \;\pmod{\ell}
\tx{.}
\end{gather*}
Then there is an~$m$ such that~$P_m$ has positive upper density.

If~$f$ is a cusp form, the set of primes~$p \equiv 1 \,\pmod{\ell N}$ with the property that
\begin{gather*}
  \forall n \in \ZZ \setminus p \ZZ \,:\,
  c(f;\,p^{\ell-1} n)
\equiv
  0 \;\pmod{\ell}
\end{gather*}
has positive upper density.
\end{corollary}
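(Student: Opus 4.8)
The plan is to run the argument by which Treneer derives Proposition~\ref{prop:integral-weight-p-congruences-treneer}, but to feed Proposition~\ref{prop:hecke-implies-ramanujan-type} the flavor of Serre's density statement complementary to hers: instead of primes $p \equiv -1 \pmod{\ell N}$ with vanishing $\rmTcl_p$-eigenvalue, I would produce primes $p \equiv 1 \pmod{\ell N}$ at which the relevant form modulo~$\ell$ has $\rmTcl_p$-eigenvalue~$2$, so that its $\ell$-local Euler factor degenerates to $(1-X)^2$ and Proposition~\ref{prop:hecke-implies-ramanujan-type} applies in its congruent-discriminant case with parameter $\ell-1$. The first step is the reduction to a cuspidal Hecke congruence modulo~$\ell$: by the construction underlying Treneer's proof of her Theorem~1.2 (iterate~$\rmU_\ell$, multiply by a modular form congruent to~$1$ modulo~$\ell$ to restore holomorphy at the cusps, and bound the filtration), there are a non-negative integer~$m$ and a cusp form~$g$ of some integral weight~$k'$ for a Dirichlet character modulo~$\ell N$, with $\ell$-integral Fourier coefficients, such that $c(g;\,n) \equiv c(f;\,\ell^m n) \pmod{\ell}$ for every integer~$n$ coprime to~$\ell$; when~$f$ is a cusp form one takes $m=0$, $g=f$. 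Reducing modulo~$\ell$ and passing to a Hecke eigensystem as in Ono--Ahlgren~\cite{ono-2000,ahlgren-ono-2001}, I may assume $\ov g$, the reduction of~$g$, is a simultaneous eigenform for all $\rmTcl_p$ with $p \nisdiv \ell N$, with eigenvalue~$\lambda_p$.

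Next is the density input. Attach to~$\ov g$ its mod~$\ell$ Galois representation~$\ov\rho$ in the sense of Deligne, unramified outside~$\ell N$, with $\tr\,\ov\rho(\mathrm{Frob}_p) \equiv \lambda_p$ and $\det\,\ov\rho(\mathrm{Frob}_p) \equiv \chi(p) p^{k'-1} \pmod{\ell}$ for $p \nisdiv \ell N$. Let~$L$ be the compositum of $\QQ(\zeta_{\ell N})$ with the fixed field of~$\ov\rho$. By the Chebotarev density theorem the primes~$p$ with $\mathrm{Frob}_p = 1$ in $\mathrm{Gal}(L/\QQ)$ form a set of density $1/[L:\QQ] > 0$; any such~$p$ is unramified for~$\ov\rho$, splits completely in~$\QQ(\zeta_{\ell N})$ — so $p \equiv 1 \pmod{\ell N}$, hence $p \nisdiv \ell N$ — and has $\ov\rho(\mathrm{Frob}_p) = \mathrm{Id}$, whence $\lambda_p \equiv 2$ and $\chi(p) p^{k'-1} \equiv 1 \pmod{\ell}$. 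The positivity of the density of this Frobenius class is immediate (it is the identity class), and is the counterpart of the trace-zero class over~$-1$ modulo~$\ell N$ furnished by Serre's theorem~\cite{serre-1976} and used by Treneer. Thus for every~$p$ in this positive-density set we have the Hecke congruence $g \big|_{k',\chi} \rmTcl_p \equiv 2\,g \pmod{\ell}$ together with $\lambda_p^2 \equiv 4 \equiv 4\,\chi(p) p^{k'-1} \pmod{\ell}$.

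Now I would apply Proposition~\ref{prop:hecke-implies-ramanujan-type} to~$g$ and such a prime~$p$. Since $\lambda_p^2 \equiv 4\,\chi(p) p^{k'-1} \pmod{\ell}$, the congruent-discriminant alternative of that proposition shows that the Ramanujan-type congruence with gaps $\forall n \in \ZZ \setminus p\ZZ :\, c(g;\,p^{m'} n) \equiv 0 \pmod{\ell}$ holds exactly when $m' \equiv -1 \pmod{\ell}$, hence for $m' = \ell - 1$. Feeding back $c(g;\,n) \equiv c(f;\,\ell^m n) \pmod{\ell}$ for~$n$ coprime to~$\ell$ turns this into $\forall n \in \ZZ \setminus (p\ZZ \cup \ell\ZZ) :\, c(f;\,\ell^m p^{\ell-1} n) \equiv 0 \pmod{\ell}$, which is the first assertion; when~$f$ is a cusp form we have $m=0$ and $g=f$, and the second assertion follows directly, now with only the restriction $n \notin p\ZZ$.

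The step I expect to be the main obstacle is the reduction in the first paragraph: iterating~$\rmU_\ell$ to land inside a space of cusp forms modulo~$\ell$ while bounding the weight and level and, above all, recording precisely which Fourier coefficients of~$f$ are being captured, followed by the passage to a genuine mod-$\ell$ eigenform. This is the technical heart of Treneer's work~\cite{treneer-2006}, and the economical route is to import it rather than reprove it. Everything else is soft: the conditions that $\mathrm{Frob}_p$ be trivial for~$\ov\rho$ and that $p \equiv 1 \pmod{\ell N}$ can be imposed together at no cost once one passes to the compositum~$L$ (and $\ov\rho(\mathrm{Frob}_p) = \mathrm{Id}$ automatically delivers the determinant normalization $\chi(p) p^{k'-1} \equiv 1$), and the final step is a direct appeal to Proposition~\ref{prop:hecke-implies-ramanujan-type}.
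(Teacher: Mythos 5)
Your proposal follows essentially the same route as the paper: reduce to cusp forms via Treneer's Theorem~3.1, produce a positive-density set of primes $p \equiv 1 \pmod{\ell N}$ at which the relevant form satisfies a $\rmTcl_p$-congruence with eigenvalue~$2$, and conclude from the case $\lambda_p^2 \equiv 4\chi(p)p^{k-1} \pmod{\ell}$ of Proposition~\ref{prop:hecke-implies-ramanujan-type} with exponent $\ell-1 \equiv -1 \pmod{\ell}$. The only real divergence is the density input: the paper simply cites Serre's statement (Exercise~6.4 of~\cite{serre-1976}) that for a positive density of primes $p \equiv 1 \pmod{\ell N}$ the operator $\rmTcl_p$ acts by multiplication by~$2$ on the \emph{entire} space of cusp forms modulo~$\ell$ of the given weight and character, whereas you re-derive this by Chebotarev applied to the mod-$\ell$ Galois representation of an eigenform. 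In doing so you write that you ``may assume $\ov{g}$ is a simultaneous eigenform''; that is not a legitimate reduction, since $g$ is in general only a sum of (generalized) eigencomponents and you need a single prime~$p$ at which every component simultaneously has eigenvalue~$2$. This is precisely what Serre's formulation --- an identity of operators on the whole space --- provides, and your Chebotarev argument repairs itself if you instead take the compositum of the fixed fields of the finitely many mod-$\ell$ representations attached to all eigensystems occurring in that space and demand $\mathrm{Frob}_p = 1$ there. With that amendment the argument is correct and matches the paper's.
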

\begin{proof}
We can reduce our considerations to the case of cusp forms by invoking Theorem~3.1 of~\cite{treneer-2006}. Serre states in Exercise~6.4 of~\cite{serre-1976} (see also Corollary~3.13 of~\cite{serre-2012}) that a set of positive upper density of primes~$p \equiv 1 \,\pmod{\ell N}$ has the property that $\rmTcl_p$ acts by multiplication with~$2$ on the space of cusp forms modulo~$\ell$ of fixed integral weight for fixed Dirichlet character. In particular, there is a set of positive upper density of such primes with~$f |_k \rmTcl_p \equiv 2 f \,\pmod{\ell}$. We can now employ Proposition~\ref{prop:hecke-implies-ramanujan-type} with~$\lambda_p = 2$ and~$\lambda_p^2 \equiv 4 \equiv 4 \chi(p) p^{1-k} \,\pmod{\ell}$.
\end{proof}

Serre's result arises from a combination of the Chebotarev Density Theorem and integral models for $\ell$-adic Galois representations associated with modular forms. The latter encode the Hecke eigenvalues of newforms via their image under the (conjugacy classes of the) Frobenius elements. The former then yields a positive proportion of Frobenius elements in any conjugacy class of the image modulo~$\ell$ once the intersection of that conjugacy class with the image modulo~$\ell$ is not empty. There are only two elements of the absolute Galois group of~$\QQ$ whose image can be uniformly described for all newforms, namely the identity and the complex conjugation. Their images correspond to the two cases in Serre's theorem.

\subsection{Hecke modules}
\label{sec:ramanujan-type-hecke:hecke-modules}

While Hecke operators~$\rmTcl_p$ act semi-simply on spaces of modular forms~$\rmM_k(\Ga_0(N), \chi)$, $p \nisdiv N$, there are plenty of exceptions to this for modular forms modulo~$\ell$, which arise from congruences between cusp forms~\cite{ribet-1983}. Concrete examples from the early days of the theory were provided by, for example, Hida~\cite{hida-1981} and Zagier~\cite{zagier-1985}. An investigation of the situation in level~$1$, formulated in more modern terms, was pursued by~Bella\"{i}che and Khare~\cite{bellaiche-khare-2015}, while the case of Eisenstein congruences for square-free levels, rooted in work of Mazur~\cite{mazur-1977} on prime levels, appears in work of Wake and Wang-Erickson~\cite{wake-wang-erickson-2021}.

\begin{example}
When restricting to cusp forms of level~$N = 1$, the first examples occur already in weight~$k = 24$. For example, if~$\ell = 3$ and~$p \in \{2, 5\}$, then~$\rmTcl_p$ acts nilpotently, but not as zero. Specifically, we have a generalized Hecke eigenforms
\begin{gather*}
  f_1
\;\equiv\;
  e(2 \tau) + 2 e(5 \tau) + \cO(e(6 \tau))
  \,\pmod{3}
\tx{,}\quad
  f_0
\;\equiv\;
  2 e(\tau) + \cO(e(3 \tau))
  \,\pmod{3}
\end{gather*}
with~$\rmTcl_2\,f_1 \equiv f_0 \,\pmod{3}$ and~$\rmTcl_2\, f_0 \equiv 0 \,\pmod{3}$.

One of the first examples with~$\lambda \not\equiv 0 \,\pmod{\ell}$ arises from weight~$k = 24$, $\ell = 7$, and~$p = 2$. We then have generalized Hecke eigenforms
\begin{align*}
  f_1
\;&\equiv\;
  e(\tau) + 4 e(2 \tau) + 5 e(4 \tau) + \cO(e(6 \tau))
  \,\pmod{7}
\tx{,}
\\
  f_0
\;&\equiv\;
  5 e(\tau) + 5 e(2 \tau) + \cO(e(3 \tau))
  \,\pmod{7}
\end{align*}
with~$(\rmTcl_2 - 4)\,f_1 \equiv f_0 \,\pmod{7}$ and~$(\rmTcl_2 - 4)\, f_0 \equiv 0 \,\pmod{7}$. Observe that also this case is slightly special, since~$\lambda^2 \equiv 4 \chi(p) p^{k-1} \,\pmod{\ell}$, so that the $\rmL$-polynomial of~$f_0$ is not separable modulo~$\ell$.
\end{example}

Since we cannot find a basis of Hecke eigenforms of~$\rmM_k(\chi; \OKell) \,\pmod{\ell}$ in all cases, Proposition~\ref{prop:hecke-implies-ramanujan-type} does not strictly exhaust all possibilities. In this section, we show that Ramanujan-type congruences remain stable under the action of Hecke operators. In particular, Ramanujan-type congruences for some modular form imply Ramanujan-type congruences for Hecke eigenforms modulo~$\ell$.

Fix a number field $K \subset \CC$, a prime ideal~$\ell \subset \OK$, an integral weight~$k$, and a Dirichlet character~$\chi$ modulo~$N$. Consider a positive integer~$M$, $\gcd(M, \ell N) = 1$, and an integer~$\beta$. We let~$M_{\rms\rmf}$ be the largest square-free divisor of~$M$, and assume that~$M$ exactly divides~$\beta M_{\rms\rmf}$. We define the~$\OKell$\nbd modules
\begin{gather}
\begin{aligned}
\label{eq:ramanujan-type-congruence-modules}
  \rmR^\times_\ell\big( M \ZZ + \beta,\, \rmS_k(\chi) \big)
\;&:=\;
  \big\{
  \rmS_k(\chi; \OKell) \,:\,
  \forall n \in \ZZ,\, \gcd(n,M) = 1 \,:\,
\\&
\hphantom{:=\; \big\{ \rmS_k(\chi; \OKell) \,:\,}
\qquad\qquad
  c(f; (M \slash M_{\rms\rmf}) n) \equiv 0 \,\pmod{\ell}
  \big\}
\\
  \rmR_\ell\big( M \ZZ + \beta,\, \rmS_k(\chi) \big)
\;&:=\;
  \big\{
  \rmS_k(\chi; \OKell) \,:\,
  \forall n \in \ZZ \,:\, c(f; M n + \beta) \equiv 0 \,\pmod{\ell}
  \big\}
\tx{.}
\end{aligned}
\end{gather}
The condition that~$M$ exactly divides~$\beta M_{\rms\rmf}$ is required to conveniently formulate the first set. This condition does not restrict our scope by Theorem~\ref{thm:structure-of-ramanujan-type-congruences}~\ref{it:thm:structure-of-ramanujan-type-congruences:square-free}.

The next theorem states that the $\OKell$\nbd modules in~\eqref{eq:ramanujan-type-congruence-modules} are Hecke-modules away from~$\gcd(M, \ell N)$ for odd~$\ell$. If~$\ell$ is even only Ramanujan-type congruences with gap are stable under the Hecke action for integral weights.
\begin{theorem}
\label{thm:hecke-module-ramanujan-type-congruences}
Consider the sets of cusp forms with a Ramanujan-type congruence (with gap) as in~\eqref{eq:ramanujan-type-congruence-modules}. Given a prime~$p$, $p \nisdiv \gcd(M, \ell N)$, we have
\begin{gather*}
  \rmR^\times_\ell\big( M \ZZ + \beta,\, \rmS_k(\chi) \big)
  \big|_{k,\chi}\, \rmTcl_p
\;\subseteq\;	
  \rmR^\times_\ell\big( M \ZZ + \beta,\, \rmS_k(\chi) \big)
\tx{.}
\end{gather*}
If~$\ell \nisdiv 2$, we have
\begin{gather*}
  \rmR_\ell\big( M \ZZ + \beta,\, \rmS_k(\chi) \big)
  \big|_{k,\chi}\, \rmTcl_p
\;\subseteq\;
  \rmR_\ell\big( M \ZZ + \beta,\, \rmS_k(\chi) \big)
\tx{.}
\end{gather*}
\end{theorem}
The proof of Theorem~\ref{thm:hecke-module-ramanujan-type-congruences} requires some preparation, which will equally come to use in the proof of Theorem~\ref{thm:ramanujan-type-implies-hecke}.

We write~$\rmL(f,s) = \sum_{n = 1}^\infty c(f;n) n^{-s}$ for the $\rmL$\nbd series associated with a modular form~$f$ for, say, $\Gamma_1(N)$, and~$\rmL^p(f,s) = \sum_{p \nisdiv n} c(f;n) n^{-s}$ for the $\rmL$\nbd series away from~$p$. Given a generalized Hecke eigenform~$f$ of eigenvalue~$\lambda_p \,\pmod{\ell}$ under~$\rmTcl_p$, we let the~$\rmL$\nbd polynomial at~$p$ be~$\rmL_p(\lambda_p, X) := 1 - \lambda_p X + \chi(p) p^{k-1} X^2$. Observe that we suppress~$k$ and~$\chi$ from our notation. In complete analogy with modular forms, we will say that two~$\rmL$-series are congruent modulo~$\ell$, if their coefficients are.

Consider an eigenform~$f$ of the classical Hecke operator~$\rmTcl_p$ with eigenvalue~$\lambda_p$. The~$\rmL$\nbd series of~$f$ admits the factorization~$\rmL(f,s) = \rmL_p(\lambda_p,p^{-s})^{-1}\, \rmL^p(f,s)$. The next lemma extends this to generalized Hecke eigenforms.
\begin{lemma}
\label{la:generalized-hecke-eigenform}
Fix a number field $K \subset \CC$ and a prime ideal~$\ell \subset \OK$. Let~$f$ be a modular form of integral weight~$k$ for a Dirichlet character~$\chi$ modulo~$N$ with Fourier coefficients~$c(f;\,n) \in \OKell$. Consider a prime~$p$ that is co-prime to~$\ell N$. Assume that we have the generalized congruence
\begin{gather*}
  f \big|_{k,\chi} \big( \rmTcl_p - \lambda_p \big)^{d+1}
\;\equiv\;
  0
  \;\pmod{\ell}
\tx{,}\quad
  \lambda_p \in \OKell
\tx{,}
\end{gather*}
for some nonnegative integer~$d$. Given a nonnegative integer~$t$, set
\begin{gather*}
  f_t
\;:=\;
  f \big|_{k,\chi} \big( \rmTcl_p - \lambda_p \big)^t
\tx{.}
\end{gather*}

Then we have
\begin{gather}
\label{eq:la:generalized-hecke-eigenform}
  \rmL(f,s)
\;\equiv\;
  \sum_{t = 0}^d \frac{p^{-s t}\, \rmL^p(f_t,s)}{\rmL_p(\lambda_p,p^{-s})^{t+1}}
  \;\pmod{\ell}
\tx{.}
\end{gather}
\end{lemma}
\begin{proof}
For any modular form~$g \in \rmM_k(\Ga_0(N), \chi)$ and any integer~$n$, we have the usual formula
\begin{gather}
\label{eq:la:generalized-hecke-eigenform:fourier-coefficient-hecke-formula}
  c\big( g \big|_{k,\chi}\, \rmTcl_p ; n \big)
\;=\;
  \left\{
  \begin{alignedat}{2}
  & c(g; np)
  \tx{,}\quad
  && \tx{if $p \nisdiv n$;}
  \\
  & c(g; np) + \chi(p) p^{k-1} c(g; n \slash p)
  \tx{,}\quad
  && \tx{if $p \isdiv n$.} 
  \end{alignedat}
  \right.
\end{gather}
Applying this to $f_t$, $0 \le t < d$, we find the recursion formula
\begin{gather*}
  \rmL_p(\lambda_p, p^{-s})\, \rmL(f_t, s)
  \,-\,
  \rmL^p(f_t)
\;\equiv\;
  p^{-s}\,
  \rmL(f_{t+1})
  \;\pmod{\ell}
\tx{,}
\end{gather*}
where the second term on the left hand side arises from the condition that~$n$ is an integer in the previous coefficient formula. After solving for~$L(f_t,s)$ this yields the statement when applying induction on~$d$.
\end{proof}

\begin{proof}%
[{Proof of Theorem~\ref{thm:hecke-module-ramanujan-type-congruences}}]
Theorem~\ref{thm:structure-of-ramanujan-type-congruences}~\ref{it:thm:structure-of-ramanujan-type-congruences:ramanujan-type-congruence-with-gap} says that if~$\ell \nisdiv 2$, Ramanujan-type congruences are equivalent to Ramanujan-type congruences with gap. Therefore the second part of the Theorem~\ref{thm:hecke-module-ramanujan-type-congruences} follows from the first one.

Fix an element~$f$ of~$\rmR^\times_\ell( M \ZZ + \beta,\, \rmS_k(\chi) )$. If we have~$p \nisdiv M$, the Fourier coefficient formula alike the on in~\eqref{eq:la:generalized-hecke-eigenform:fourier-coefficient-hecke-formula} implies that~$f | \rmTcl_p \in \rmR^\times_\ell( M \ZZ + \beta,\, \rmS_k(\chi) )$ as desired. Therefore, we can and will assume that~$p \isdiv M$ and hence~$\gcd(p, \ell N) = 1$.

We have a Ramanujan-type congruence modulo~$\ell$ of~$f$ on~$M \ZZ + \beta'$ for all integer~$\beta'$ that satisfy~$\gcd(\beta',M) = \gcd(\beta,M)$. We factor~$M = M_p M_p^\#$ with a~$p$\nbd power~$M_p$ and~$M_p^\#$ co-prime to~$p$, and set
\begin{gather*}
  f_p
\;:=\;
  \sum_{\substack{n \in \ZZ\\\gcd(n,M_p^\#) = \gcd(\beta, M_p^\#)}}
  c(f;\,n) e(n\tau)
\;=\;
  f \big|_{k,\chi}\,
  \prod_{q^{m+1} \isexdiv M_p^\#}
  \rmU_q^m \big( 1 - \rmU_q \rmV_q \big) \rmV_q^m
\tx{.}
\end{gather*}
From the second expression for~$f_p$ is becomes clear that it is a modular forms for the Dirichlet character~$\chi$ modulo~$M_p^{\#\,2} N$. From the defining expression for~$f_p$, we deduce that the Ramanujan-type congruence with gap for~$f$ is equivalent to
\begin{gather*}
  \forall n \in \ZZ \setminus p \ZZ \,:\,
  c(f_p;\,(M_p \slash p) n)
\equiv
  0 \;\pmod{\ell}
\tx{.}
\end{gather*}
Since~$\rmTcl_p$ commutes with both~$\rmU_q$ and~$\rmV_q$, $q \ne p$, in the remainder of the proof we can and will assume that~$M = M_p = p^{m+1}$, $m$ a nonnegative integer.

By Lemma~\ref{la:generalized-hecke-eigenform}, it suffices to show that for every nonnegative integer~$t$, we have
\begin{gather*}
  \forall n \in \ZZ \setminus p \ZZ \,:\,
  c(f_t;\,(M_p \slash p) n)
\equiv
  0 \;\pmod{\ell}
\tx{,}
\end{gather*}
where in the remainder of the proof~$f_t$ is as in Lemma~\ref{la:generalized-hecke-eigenform}. We also adopt~$d$ from Lemma~\ref{la:generalized-hecke-eigenform}. We write~$c(g(X); m)$ for the $m$\thdash\ coefficient of a power series~$g(X)$. By inspecting the coefficient of~$(p^m n)^{-s}$ on the right hand side of~\eqref{eq:la:generalized-hecke-eigenform}, for any positive integer~$n$ co-prime to~$p$, we obtain that
\begin{gather*}
  c(f; p^m n)
\;\equiv\;
  \sum_{t = 0}^{d}
  c\big( X^t\, \rmL_p(\lambda_p, X)^{-t-1};\, m \big)\,
  c\big( f_t; n \big)
  \;\pmod{\ell}
\tx{.}
\end{gather*}

We can and will assume that~$d$ is minimal, so that the modular forms~$f_t$, $0 \le t \le d$, are linearly independent modulo~$\ell$. By Lemma~\ref{la:oldforms-modulo-ell} there is a set of positive integers~$n$ co-prime to~$p$ such that the matrix with entries~$c(f_t;\,n)$, $0 \le t \le d$, is invertible modulo~$\ell$. In particular, we conclude the congruence~$c\big( X^t\, \rmL_p(\lambda_p, X)^{-t-1},\, m \big) \equiv 0 \,\pmod{\ell}$ for all~$0 \le t \le d$.

Applying Lemma~\ref{la:generalized-hecke-eigenform} again, we find that, for~$0 \le j \le d$,
\begin{gather*}
  c(f_j; p^m n)
\;\equiv\;
  \sum_{t = 0}^{d-j}
  c\big( X^t\, \rmL_p(\lambda_p, X)^{-t-1};\, m \big)\,
  c\big( f_{t+j}; n \big)
  \;\pmod{\ell}
\tx{.}
\end{gather*}
By this relation, the congruences~$c\big( X^t\, \rmL_p(\lambda_p, X)^{-t-1},\, m \big) \equiv 0 \,\pmod{\ell}$ imply the desired congruences~$c(f_j; p^m n) \equiv 0 \,\pmod{\ell}$.
\end{proof}

\subsection{Hecke congruences implied by Ramanujan-type congruences}
\label{sec:ramanujan-type-hecke:hecke-congruence}

In this section, we will show that under the hypothesis that~$\ell \nisdiv 2$ Ramanujan-type congruences are equivalent to specific congruences for Hecke eigenvalues. Theorem~\ref{thm:hecke-module-ramanujan-type-congruences} allows us to restrict to generalized Hecke eigenforms.
\begin{theorem}
\label{thm:ramanujan-type-implies-hecke}
Fix a number field $K \subset \CC$ and a prime ideal~$\ell \subset \OK$, $\ell \nisdiv 2$. Let~$f$ be a modular form of integral weight~$k$ for a Dirichlet character~$\chi$ modulo~$N$ with Fourier coefficients~$c(f;\,n) \in \OKell$. Fix a power~$M$ of a prime~$p$, $\gcd(p, \ell N) = 1$, and an integer~$\beta$. Assume that~$f$ is a generalized Hecke eigenform modulo~$\ell$ for~$\rmTcl_p$ of eigenvalue~$\lambda_p$ of depth~$d$.

Provided that~$f \not\equiv 0 \,\pmod{\ell}$, the following are equivalent:
\begin{enumeratearabic}
\item
\label{it:thm:ramanujan-type-implies-hecke:ramanujan}
We have the Ramanujan-type congruence
\begin{gather*}
  \forall n \in \ZZ \,:\,
  c(f;\,M n + \beta) \equiv 0 \;\pmod{\ell}
\tx{.}
\end{gather*}

\item
\label{it:thm:ramanujan-type-implies-hecke:hecke}
For every\/~$0 \le t \le d$ the coefficient of exponent\/~$\gcd(M, \beta)$ in the following formal power series vanishes modulo~$\ell$:
\begin{gather*}
  X^t\, \big( 1 - \lambda_p X + \chi(p) p^{k-1} X^2 \big)^{-t-1}
\tx{.}
\end{gather*}
\end{enumeratearabic}
\end{theorem}
\begin{remark}
Congruences for the coefficients of~$( 1 - \lambda_p p^{-s} + \chi(p) p^{k-1} p^{-2s} )^{-1}$ characterize Ramanujan-type congruences on~$M \ZZ + \beta$ of the Hecke eigenform associated with~$f$. In particular, Proposition~\ref{prop:hecke-implies-ramanujan-type} yields a more explicit description of them.
\end{remark}

Before proving Theorem~\ref{thm:ramanujan-type-implies-hecke}, we establish one corollary on densities of Ra\-ma\-nu\-jan-type congruences.
\begin{corollary}
\label{cor:ramanujan-type-has-density}
Let $\ell$, $f$, $M$, and~$\beta$ be as in Theorem~\ref{thm:ramanujan-type-implies-hecke}, and fix a positive integer~$m$. Then the set of primes~$p$ such that
\begin{gather*}
  \exists \beta \in \ZZ \,:\,
  \forall n \in \ZZ \,:\, c(f;\,M p^m n + \beta) \equiv 0 \;\pmod{\ell}
\end{gather*}
is frobenian in the sense of~\S\,3.3.2 of Serre~\cite{serre-2012}. In particular, it has a density.
\end{corollary}
\begin{proof}
Combine the characterization in Theorem~\ref{thm:ramanujan-type-implies-hecke} with Remark~\ref{rm:endomorphism-set-SellN-frobenian}.
\end{proof}

\begin{remark}
In a similar vein, by virtue of Propositions~\ref{prop:hecke-implies-ramanujan-type}, Theorem~\ref{thm:hecke-module-ramanujan-type-congruences}, and Remark~\ref{rm:endomorphism-set-SellN-frobenian}, any cusp form as in Theorem~\ref{thm:ramanujan-type-implies-hecke} that has a Ramanujan-type congruence modulo~$\ell$ on, say, the arithmetic progression~$p^m (p \ZZ + \beta_p)$, $\ell \nisdiv p$, has Ramanujan-type congruences modulo~$\ell$ on $q^m (q \ZZ + \beta_q)$ for a set of primes~$q$ with~$q \equiv p \,\pmod{\ell}$ of positive density.
\end{remark}

\begin{proof}[{Proof of Theorem~\ref{thm:ramanujan-type-implies-hecke}}]
Since we assume that~$\ell \nisdiv 2$, by Theorem~\ref{thm:structure-of-ramanujan-type-congruences}~\ref{it:thm:structure-of-ramanujan-type-congruences:ramanujan-type-congruence-with-gap} and~\ref{it:thm:structure-of-ramanujan-type-congruences:beta-equiv-zero}, we can assume that~$M$ exactly divides~$\beta p$. We write~$M = p^{m + 1}$ for a nonnegative integer~$m$.

The proof follows closely the lines of the proof of Theorem~\ref{thm:hecke-module-ramanujan-type-congruences}. Adopting notation from there, for any positive integer~$n$ co-prime to~$p$, we have
\begin{gather*}
  c(f; p^m n)
\;\equiv\;
  \sum_{t = 0}^{d}
  c\big( X^t\, \rmL_p(\lambda_p, X)^{-t-1};\, m \big)\,
  c\big( f_t; n \big)
  \;\pmod{\ell}
\tx{.}
\end{gather*}
This reveals that~\ref{it:thm:ramanujan-type-implies-hecke:hecke} implies~\ref{it:thm:ramanujan-type-implies-hecke:ramanujan}.

For the converse, we employ Lemma~\ref{la:oldforms-modulo-ell}. It guarantees that there is a set of positive integers~$n$ co-prime to~$p$ such that the matrix with entries~$c(f_t;\,n)$, $0 \le t \le d$, is invertible modulo~$\ell$. This suffices to establish that~\ref{it:thm:ramanujan-type-implies-hecke:ramanujan} implies~\ref{it:thm:ramanujan-type-implies-hecke:hecke}.
\end{proof}

We finish this paper with a proof of Corollary~\ref{maincor:no-ramanujan-type-congruences} and a justification of Example~\ref{mainex:delta-function}.

\begin{proof}[{Proof of Corollary~\ref{maincor:no-ramanujan-type-congruences}}]
By Theorems~\ref{thm:hecke-module-ramanujan-type-congruences} and~\ref{thm:ramanujan-type-implies-hecke}, we can employ the conditions listed in Proposition~\ref{prop:hecke-implies-ramanujan-type} to rule out Ramanujan-type congruences. Using notation from Proposition~\ref{prop:hecke-implies-ramanujan-type}, in the case of~$\lambda_p^2 \equiv 4 \chi(p) p^{k-1} \,\pmod{\ell}$ the assumption that~$\ell \nisdiv (m+1)$ excludes such congruences. In the case $\lambda_p^2 \not\equiv 4 \chi(p) p^{k-1} \,\pmod{\ell}$, we have the condition~$\alpha_p^{m+1} \equiv \beta_p^{m+1} \,\pmod{\ell}$. Since~$m+1$ is co-prime to the order~$\ell-1$ of the unit group~$\bbF_\ell^\times = (\ZZ \slash \ell \ZZ)^\times$, this is equivalent to~$\alpha_p \equiv \beta_p \,\pmod{\ell}$. This entails the congruences~$4 \lambda_p^2 \equiv 4 \chi(p) p^{k-1} \,\pmod{\ell}$, a contradiction.
\end{proof}

\begin{proof}[{Proof of Example~\ref{mainex:delta-function}}]
Assume that we have a Ramanujan-type congruence modulo~$\ell$ for~$\Delta$ on the arithmetic progression~$M \ZZ + \beta$. We first show that we may assume that~$M$ is a prime power. Consider the case that it is not. Then there is a prime~$p \ne \ell$ with~$p \isdiv M$ and~$M_p^\# \ne 1$, where we use the same notation as in Theorem~\ref{mainthm:ramanujan-type-implies-hecke}. 

We have to show that~$\Delta$ has a Ramanujan-type congruence on at least one of the progressions~$M_p^\# \ZZ + \beta$ or~$M_p \ZZ + \beta$. Consider the modular form
\begin{gather*}
  \Delta_p(\tau)
\;:=\;
  \sum_{\substack{n \in \ZZ\\\gcd(n,M_p^\#) = \gcd(\beta,M_p^\#)}}
  c(\Delta;\,n) e(n \tau)
\tx{,}
\end{gather*}
which has a Ramanujan-type congruence on~$M_p \ZZ + \beta$ by Theorem~\ref{thm:structure-of-ramanujan-type-congruences}~\ref{it:thm:structure-of-ramanujan-type-congruences:ramanujan-type-congruence-with-gap}. We can and will assume that $\Delta_p \not\equiv 0 \,\pmod{\ell}$, since we have otherwise discovered a Ramanujan-type congruence for~$\Delta$ on~$M_p^\# \ZZ + \beta$.

We apply Theorem~\ref{mainthm:ramanujan-type-implies-hecke} to find that the Ramanujan-type congruence for~$\Delta_p$ arises from congruences for Hecke eigenvalues. Since~$\Delta$ and hence~$\Delta_p$ is an eigenform for the~$p$\thdash\ classical Hecke operator, the following congruence suffices:
\begin{gather*}
  \Delta_p \big|_{12}\, \rmTcl_p
\;\equiv\;
  c(\Delta, p) \Delta_p
  \;\pmod{\ell}
\tx{.}
\end{gather*}
We now apply Proposition~\ref{prop:hecke-implies-ramanujan-type} to show that~$\Delta$ already has a Ramanujan-type congruence on~$M_p \ZZ + \beta$.

The case that~$M$ is a power of~$\ell$ requires us to prove that we may assume that~$M = \ell$. If we have a~$\rmU_\ell$-congruence this is clear. We can therefore assume that the $\rmU_\ell$\nbd eigenvalue of~$\Delta \,\pmod{\ell}$ does not vanish. We can employ Proposition~\ref{prop:congruences-square-free-quotient} to reduce ourselves to the case that~$M = \ell^{m+1}$ and~$\beta = \ell^m \beta_0$ for some~$\beta_0$ with~$\ell \nisdiv \beta_0$. To conclude the argument, we use that~$\Delta \,\pmod{\ell}$ is an eigenform with nonzero eigenvalue under~$\rmU_\ell^m$. This implies that we have a Ramanujan-type congruence on the arithmetic progression~$\ell \ZZ + \beta_0$.
\end{proof}

\renewbibmacro{in:}{}
\renewcommand{\bibfont}{\normalfont\small\raggedright}
\renewcommand{\baselinestretch}{.8}

\Needspace*{4em}
\printbibliography[heading=none]%

\Needspace*{3\baselineskip}
\noindent
\rule{\textwidth}{0.15em}

{\noindent\small
Chalmers tekniska högskola och G\"oteborgs Universitet,
Institutionen för Matematiska vetenskaper,
SE-412 96 Göteborg, Sweden\\
E-mail: \url{martin@raum-brothers.eu}\\
Homepage: \url{http://raum-brothers.eu/martin}
}%

\end{document}

